\title{The inertial Jacquet--Langlands correspondence}
\author{Andrea Dotto}
\date{}
\begin{document}

\maketitle

\begin{abstract}
We give a parametrization of the simple Bernstein components of inner forms of a general linear group over a local field by two invariants constructed from type theory, and explicitly describe its behaviour under the Jacquet--Langlands correspondence. Along the way, we prove a conjecture of Broussous, S\'echerre and Stevens on preservation of endo-classes.
\end{abstract}

\tableofcontents

\section{Introduction.}

The construction of types for Bernstein components of an inner form of $\GL_n(F)$, for~$F$ a local non-archimedean field, was initiated by Bushnell and Kutzko in the split case and continued and eventually completed by Broussous, S\'echerre and Stevens in general. Meanwhile, Bushnell and Henniart provided a uniform description, for varying~$n$, of the objects which enter these constructions, relying on the basic notion of \emph{endo-class of simple characters}, and they started a programme aiming to use type theory to describe various instances of Langlands functoriality for general linear groups, such as the local Langlands correspondence, the Jacquet--Langlands correspondence, automorphic induction and base change of representations. This paper completes this programme for the Jacquet--Langlands correspondence at the level of inertial classes of representations.
\paragraph{}
Let $A=M_m(D)$ be a central simple algebra over~$F$, where~$D$ is a central division algebra of reduced degree~$d$ over~$F$. Then $G = \GL_m(D) = A^\times$ is an inner form of $H = \GL_n(F)$ for~$n = md$. Recall that the Jacquet--Langlands correspondence is a bijection
\begin{displaymath}
\JL_G : \bD(G) \to \bD(H)
\end{displaymath}
between the sets of essentially square-integrable representations (or discrete series representations) of these groups, characterized by the equality
\begin{displaymath}
(-1)^m \tr(\pi) = (-1)^n\tr(\JL_G\, \pi)
\end{displaymath}
on matching regular elliptic elements of~$G$ and~$H$. Here, $\tr(\pi)$ denotes the Harish-Chandra character of~$\pi$, identified with a function on regular semisimple elements.
\paragraph{}
The category of smooth representations of the groups~$G$ and~$H$, as for any other connected reductive group over~$F$, decomposes according to the action of the Bernstein centre. A block in the Bernstein decomposition corresponds to a component of the Bernstein variety, hence to an inertial class of supercuspidal supports, and two discrete series representations are in the same block if and only if they are unramified twists of each other. 
Since the Jacquet--Langlands correspondence commutes with twisting by characters, it yields a bijection
\begin{displaymath}
\JL_G : \fB_\mathrm{ds}(G) \to \fB_{\mathrm{ds}}(H)
\end{displaymath}
on the sets of components containing discrete series representations. By definition, these are the \emph{simple} components. 
The irreducible representations contained in a simple Bernstein component are said to form a simple inertial class.
\paragraph{}

In order to describe this map explicitly one needs a parametrization of both sides in terms of objects that can be compared to each other. 
The parametrization we use is a variant of the construction of~\cite{SecherreStevensJL}, which gets around a small ambiguity in that reference (compare~\cite[Section~9]{SecherreStevensJL}).
Unfortunately, we have found it quite involved to make it independent of all choices.
Precise definitions will be given later, and we just sketch our construction here. 

Given a simple inertial class~$\fs$ for~$\GL_m(D)$, we will attach to it two invariants that determine it completely. 
The first one, which we denote~$\cl(\fs)$, has been constructed by~\cite{BSSV} and consists of an endo-class of simple characters.
The second one is a representation of a finite general linear group, corresponding to a set of characters of the multiplicative group of a finite field through the parametrization of Green and Deligne--Lusztig. 
Its construction presents two sources of ambiguity: the action of a Galois group (denoted~$\Gal(\be_{n/\delta(\Theta_F)}/\mbf)$ in the main text) and the choice of a conjugacy class~$\kappa(\fs)$ of maximal $\beta$-extensions of endo-class~$\cl(\fs)$.

In most cases that have been treated in the literature, these ambiguities can be resolved by making an arbitrary choice. 
However, our arguments require us to work with many different groups at once, and so we need to make sure these choices are all compatible with each other.
To get around this, we will content ourselves with attaching a well-defined invariant
\[
\Lambda(\fs, \Theta_E, \kappa(\fs))
\]
to any simple inertial class~$\fs$ endowed with a conjugacy class of maximal $\beta$-extensions $\kappa(\fs)$ of endo-class~$\cl(\fs)$, and a lift~$\Theta_E$ of~$\cl(\fs)$ to its unramified parameter field.
The lift~$\Theta_E$ turns out to be exactly what is needed to remove the Galois ambiguity (this part is inspired by~\cite{BHeffective}).
With regard to~$\kappa(\fs)$, a natural choice is given by the \emph{$p$-primary} $\beta$-extensions, and although more refined choices are possible it will be enough to work with these.

This language allows us to state clearly the behaviour of these invariants under various functorial procedures.
For instance, using the modular version of type theory developed in~\cite{MStypes}, and the block decomposition of~\cite{SecherreStevensblocks}, we give a version of this construction that works over any algebraically closed coefficient field~$R$ with characteristic different from~$p$, and we determine its behaviour under reduction modulo~$\ell$ of an integral $\cbQ_\ell$-representation. We also prove a compatibility result with respect to parabolic induction.

\paragraph{}
Our main results on the Jacquet--Langlands correspondence are as follows. 
Let~$\fs_G$ and~$\fs_H$ be simple inertial classes of complex representations for the groups~$G$ and~$H$ respectively, and assume that $\fs_H = \JL_G(\fs_G)$.

\begin{thmintro}(Theorem~\ref{endo-classinvariance}.)
The equality $\cl(\fs_G) = \cl(\fs_H)$ holds.
\end{thmintro}
Since~$\cl(\fs)$ coincides with the endo-class attached to a simple inertial class in~\cite{BSSV} (see remark~\ref{compareendo-classes}) this theorem implies conjecture~9.5 in~\cite{BSSV}, the ``endo-class invariance conjecture". 

Next, we study the behaviour of the second invariant in our parametrization. Since our invariants determine a simple inertial class uniquely, these two theorems give a complete description of the Jacquet--Langlands correspondence at the level of inertial classes.

\begin{thmintro}(Theorem~\ref{comparecharactersingeneral}.)
Let~$\Theta_F = \cl(\fs_G) = \cl(\fs_H)$, and let~$\epsilon^1_G$ and~$\epsilon^1_H$ be the symplectic sign characters attached to any maximal simple character in~$G$ and~$H$ of endo-class~$\Theta_F$. 
Fix a lift~$\Theta_E \to \Theta_F$ of~$\Theta_F$ to its unramified parameter field.
Let~$\kappa_G$ and~$\kappa_H$ be the $p$-primary conjugacy classes of maximal $\beta$-extensions of endo-class~$\Theta_F$ in~$G$ and~$H$. 
Then
\begin{displaymath}
\Lambda(\fs_G, \Theta_E, \epsilon^1_G \kappa_G) = \Lambda(\fs_H, \Theta_E, \epsilon^1_H \kappa_H).
\end{displaymath}
\end{thmintro}

\paragraph{} To prove our theorems we heavily use the techniques developed in~\cite{BHJL} and~\cite{SecherreStevensJL}, which prove special cases of our results in the context of essentially tame endo-classes. 
The two invariants of a simple inertial class are constructed in section~3. 
To do this, we need to generalize some well-known properties of simple characters of~$\GL_n(F)$ to the nonsplit case, which we do in sections~2 and~3. Section~4 develops a character formula analogous to that in~\cite{BHJL}, keeping track of all choices. 
Then we prove the first of the theorems above, applying the method of~\cite{SecherreStevensJL} and a new technique to reduce to the split case. 
A comparison of character formulas then implies the supercuspidal case of the second theorem, and we deduce the general case applying a technique from~\cite[Section~8]{SecherreStevensJL}.

\paragraph{}
We end this introduction by pointing out that this paper does \emph{not} accomplish a local proof of the existence of the Jacquet--Langlands correspondence. The main problem is that the type is not directly related to the character, and even less so for non-supercuspidal discrete series representations. Via~\cite{MSreps}, our parametrization of simple inertial classes can be given independently of the Jacquet--Langlands correspondence, and one could write down a bijection of simple inertial classes by specifying its behaviour on our two invariants. 
The problem would then be to prove that the representations in matching inertial classes satisfy the character identity. 
The method we use in the paper assumes the existence of the Jacquet--Langlands transfer and manages to compute enough character values to characterize it completely, but the proof of this characterization relies upon knowing the existence of the transfer.

\paragraph{Acknowledgments.} I thank Toby Gee for suggesting the problem which led to my involvement with this subject, and Colin Bushnell, Guy Henniart, Vincent S\'echerre and Shaun Stevens for their advice and their interest in this work. The debt this paper owes to their ideas will be apparent to the reader, but this is a good place to acknowledge it explicitly. This work was supported by the Engineering and Physical Sciences Research Council [EP/L015234/1], The EPSRC Centre for Doctoral Training in Geometry and Number Theory (The London School of Geometry and Number Theory), University College London, and Imperial College London.

\subsection{Notation and conventions.}
Fix a local non-archimedean field~$F$ of residue characteristic~$p$ and an algebraic closure~$\overline{F}/F$, and write~$\mbf$ for the residue field, $\mO_F$ for the ring of integers and~$\pi_F$ for a uniformizer. Similar notation will be used for other local fields and central division algebras over them (so for instance~$\be$ is the residue field of~$E$). Write~$F_d$ for the unramified extension of~$F$ of degree~$d$ in~$\overline{F}$, and~$\mbf_d$ for the extension of~$\mbf$ of degree~$d$ in the algebraic closure of~$\mbf$ given by the residue field of the maximal unramified extension of~$F$ in~$\overline{F}$. The group of Teichm\"uller roots of unity in~$F$ is denoted~$\mu_F$, and the absolute value on~$F$ is normalized so that $|\pi_F| = |\mbf|^{-1}$. Whenever discussing simple characters, a choice of additive character~$\psi_F$ of~$F$ will be made implicitly and in such a way that whenever $E/F$ is a finite extension we have $\psi_E = \psi_F \circ \tr_{E/F}$.

Representations of a locally profinite group like~$\GL_m(D)$ will be tacitly assumed to be smooth. The coefficient field will change in the course of the paper, but will always be an algebraically closed field of characteristic different from~$p$, and we will specify it explicitly when needed. 
When working with coefficients in an extension of~$\bQ_\ell$, the notation~$\br_\ell(\pi)$ will stand for semisimplified mod~$\ell$ reduction of an integral representation~$\pi$ (which is well-defined in all cases of interest in this paper, by the results of~\cite{Vignerasrepsbook}).
When~$V$ is a finite length $\cbF_\ell[G]$-representation, we will sometimes use the notation~$[V]$ to denote the class of~$V$ in the Grothendieck group. 

Characters are not assumed to be unitary, and whenever a character~$\chi$ of a group~$G$ and a representation~$\pi$ of a subgroup $H \subseteq G$ are given, the representation $\pi \otimes \chi|_H$ will be called a \emph{twist} of~$\pi$.
When~$G$ is a $p$-adic reductive group and~$\chi$ is an unramified character of~$G$, this will be called an \emph{unramified twist} of~$\pi$.

For a central simple algebra~$A$ over~$F$ and $E/F$ a field extension in~$A$, the commutant of~$E$ in~$A$ will be denoted~$Z_A(E)$, and the centralizer and normalizer of~$E$ in~$G = A^\times$ will be denoted~$Z_G(E) = Z_A(E)^\times$ and $N_G(E)$ respectively. For $x \in G$, we write~$\ad(x)$ for the automorphism $z \mapsto xzx^{-1}$ of~$A$.

For an extension~$\bl / \bk$ of finite fields, we say that an element~$x \in \bl$ is \emph{$\bk$-regular} if it has~$[\bl : \bk]$ different conjugates under~$\Gal(\bl / \bk)$. A $\bk$-regular character of~$\bl^\times$ is defined similarly, via the right action $g : \chi \mapsto g^*\chi = \chi^g = \chi \circ g$ of $\Gal(\bl / \bk)$ on characters. In general, pullback by an automorphism~$g$ will be denoted~$g^*$. Notice that~$x \in \bl^\times$ can be~$\bk$-regular and still generate a proper subgroup of~$\bl^\times$ (consider, for instance, an extension of prime degree). For any character~$\alpha$ of~$\bl^\times$, define~$\bk[\alpha]$ as the fixed field of~$\Stab_{\Gal(\bl / \bk)}(\chi)$. It only depends on the orbit of~$\alpha$ under~$\Gal(\bl / \bk)$, which will be denoted~$[\alpha]$. 
If~$\ell$ is a prime number then the character~$\alpha$ decomposes uniquely as a product $\alpha = \alpha_{(\ell)}\alpha^{(\ell)}$ in which~$\alpha_{(\ell)}$ has order a power of~$\ell$ and~$\alpha^{(\ell)}$ has order coprime to~$\ell$; because this decomposition is unique, the orbit~$[\alpha^{(\ell)}]$ is independent of the representative~$[\alpha]$, and similarly for~$[\alpha_{(\ell)}]$. The orbit~$[\alpha^{(\ell)}]$ is the \emph{$\ell$-regular} part of~$[\alpha]$. 
We will often apply the following lemma.

\begin{lemma}\label{normdescent}
If~$\bl / \bk$ is an extension of finite fields, and~$\chi$ is a character of~$\bl^\times$, then there exists a unique $\bk$-regular character~$\chi^{\reg}$ of~$\bk[\chi]^\times$ such that~$\chi = \chi^{\reg} \circ N_{\bl/\bk[\chi]}$.
\end{lemma}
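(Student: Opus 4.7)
Write $\bm = \bk[\chi]$, so by definition $\Gal(\bl/\bm) = \Stab_{\Gal(\bl/\bk)}(\chi)$. The plan is to show first that $\chi$ factors through the norm $N_{\bl/\bm} : \bl^\times \to \bm^\times$, and then that the resulting factor is $\bk$-regular; uniqueness is automatic from surjectivity of the norm.

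For the factorization, I would argue as follows. Since $\bl/\bm$ is an extension of finite fields, $N_{\bl/\bm}$ is surjective, and by Hilbert 90 its kernel is the subgroup generated by elements of the form $y/g(y)$, where $g$ generates the cyclic group $\Gal(\bl/\bm)$. By the very definition of $\bm$, every element of $\Gal(\bl/\bm)$ fixes $\chi$, so
\begin{displaymath}
\chi(y/g(y)) = \chi(y)\cdot (g^*\chi)(y)^{-1} = 1
\end{displaymath}
for every $y \in \bl^\times$. Hence $\chi$ is trivial on $\ker(N_{\bl/\bm})$, and descends uniquely to a character $\chi^{\reg}$ of $\bm^\times$ with $\chi = \chi^{\reg} \circ N_{\bl/\bm}$.

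To verify $\bk$-regularity of $\chi^{\reg}$, suppose $g \in \Gal(\bm/\bk)$ fixes $\chi^{\reg}$. Since $\bl/\bk$ is automatically Galois, pick a lift $\tilde g \in \Gal(\bl/\bk)$ of $g$. The subgroup $\Gal(\bl/\bm)$ is normal in $\Gal(\bl/\bk)$, from which a short reindexing of the defining product yields the identity $N_{\bl/\bm} \circ \tilde g = \tilde g \circ N_{\bl/\bm}$ as maps $\bl^\times \to \bm^\times$, with $\tilde g$ acting on $\bm$ as $g$. Then
\begin{displaymath}
\tilde g^*\chi = \chi^{\reg} \circ g \circ N_{\bl/\bm} = \chi^{\reg} \circ N_{\bl/\bm} = \chi,
\end{displaymath}
so $\tilde g \in \Stab_{\Gal(\bl/\bk)}(\chi) = \Gal(\bl/\bm)$ and $g = \tilde g|_\bm = \mathrm{id}$, as required. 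Uniqueness of $\chi^{\reg}$ follows from surjectivity of $N_{\bl/\bm}$.

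The only nontrivial input is Hilbert 90 (in the trivial form for cyclic extensions of finite fields); the rest is formal manipulation with Galois groups. I expect no serious obstacle.
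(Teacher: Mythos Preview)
Your proposal is correct and follows essentially the same approach as the paper: both use Hilbert~90 to show $\chi$ vanishes on $\ker N_{\bl/\bk[\chi]}$, surjectivity of the norm for uniqueness, and the defining property of $\bk[\chi]$ for regularity. Your regularity argument is spelled out in slightly more detail (via an explicit lift $\tilde g$) than the paper's one-line justification, but the underlying idea is the same.
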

\begin{proof}
Since the norm map~$N_{\bl / \bk[\chi]}$ is surjective, for the existence part it suffices to prove that if~$N_{\bl / \bk[\chi]}(x) = 1$ then~$\chi(x) = 1$. But by Hilbert 90, $N_{\bl / \bk[\chi]}(x) = 1$ if and only if $x = \frac{g(y)}{y}$ for some~$g \in \Gal(\bl / \bk[\chi])$ and some~$y \in \bl^\times$, and then~$\chi(x) = 1$ as~$\chi$ is $\Gal(\bl / \bk[\chi] )$-stable. Uniqueness holds because~$N_{\bl / \bk[\chi]}$ is surjective, and regularity holds because the stabilizer of~$\chi$ in~$\Gal(\bl / \bk)$ is~$\Gal(\bl / \bk[\chi])$.  
\end{proof}

Throughout this article, the reduced degree of a central division algebra~$D$ over~$F$ (positive square root of the~$F$-dimension) is denoted by~$d$. Usually~$\GL_m(D)$ will denote an inner form of~$\GL_n(F)$, so that~$n = md$. The character ``absolute value of the reduced norm" is an unramified character of~$\GL_m(D)$, denoted~$\nu$. An unramified twist $\pi \otimes (\chi \circ \nu)$ will usually be written~$\chi\pi$.

\section{Maximal simple types.}\label{maximalsimpletypesI}
Let~$G = \GL_m(D)$ for a central division algebra~$D$ of reduced degree~$d$ over~$F$ (where possibly $D = F$). This is the group of $F$-points of a connected reductive group $\bG / F$, which is an inner form of~$\GL_{md, F}$ splitting over~$F_d$. Fix a central simple algebra $A$ of dimension $n^2$ over $F$ and a simple left $A$-module~$V$ such that the opposite of the endomorphism algebra $\End_A(V)$ is isomorphic to~$D$. Then~$D$ acts to the right on~$V$, and upon a choice of basis~$A$ identifies with the matrix algebra~$M_m(D)$ (passing to the opposite algebra ensures that the multiplication is as expected). In this section we recall some basic properties of the objects which go into the definition of types for cuspidal representations of~$G$. A lot of this material is standard, but we need generalizations to the non-split case of certain well-known properties of simple characters of~$\GL_n(F)$, and we could not find these in the literature.

The coefficient field for representations will be an algebraically closed field~$R$ of characteristic different from~$p$. By~\cite{SecherreStevensblocks}, an analogue of the Bernstein decomposition holds for the category of smooth $R$-representations of~$G$, and the blocks are in bijection with inertial equivalence classes of supercuspidal supports. 
We recall that a supercuspidal support consists of a pair $(L, \sigma)$, where~$L$ is the group of $F$-points of the $F$-Levi quotient of an $F$-parabolic subgroup of~$\bG$, and~$\sigma$ is a supercuspidal $R$-representation of~$L$. 
Two such pairs $(L_i, \sigma_i)$ are inertially equivalent if there exist $g \in G$ and an unramified character~$\chi$ of~$L_1$ such that $L_2 = gL_1g^{-1}$ and $\chi\sigma_1 = \ad(g)^*\sigma_2$. The block corresponding to an inertial class~$[L, \sigma]$ consists of those smooth representations all of whose irreducible subquotients have supercuspidal support in~$[L, \sigma]$ (see~\cite[Section~10.1]{SecherreStevensblocks}). As stated, this definition requires the uniqueness of supercuspidal support up to conjugacy for an irreducible representation, which is proved in~\cite[Section~6.2]{MSreps}.

The set of irreducible representations of~$G$ contained in a block is called an inertial class of representations. 
By definition, the simple inertial classes are those corresponding to inertial equivalence classes of the form $[\GL_{m/r}(D), \pi_0^{r}]$ for a divisor~$r$ of~$m$. 
An irreducible representation contained in a simple inertial class will be called a simple representation. 
Over the complex numbers, every essentially square-integrable representation is simple.

\paragraph{Lattice sequences.} To begin our discussion of type theory we consider \emph{lattice sequences} in the space~$V$, which are decreasing functions
\begin{displaymath}
\Lambda : \bZ \to \; ( \mO_D\text{-lattices in }V )
\end{displaymath}
where the right hand side is ordered by inclusion, and such that there exists a positive integer~$e$ with $\Lambda_{k+e} = \Lambda_k \fp_D$ for all~$k$. 
The number~$e$ is called the \emph{$\mO_D$-period} of the sequence; the sequence is called a \emph{chain}, or a \emph{strict sequence}, if it is strictly decreasing. 
A sequence is called \emph{uniform} (see~\cite[1/7]{Froelichprincipalorders}) if it is a chain and the dimension of $\Lambda_k/\Lambda_{k+1}$ over the residue field~$\bd$ of~$D$ is constant as~$k$ varies.

Every sequence defines a hereditary $\mO_F$-order $\fA = \fP_0(\Lambda)$ in $A$ equipped with a filtration by $\mO_F$-lattices $\fP_n(\Lambda)$, via
\begin{displaymath}
\fP_n(\Lambda) = \{ a \in A : a\Lambda_k \subseteq \Lambda_{k+n}\; \text{for all}\; k \in \bZ \}.
\end{displaymath}
The Jacobson radical~$\fP(\fA)$ of~$\fA$ then equals~$\fP_1(\Lambda)$ (see~\cite[1.2]{SecherreI}), and we write $U^n(\Lambda)$ for $1 + \fP_n(\Lambda)$. The \emph{normalizer} of a sequence is defined as
\begin{displaymath}
\fK(\Lambda) = \{ g \in A^\times : \; \text{there exists} \; n \in \bZ \text{ such that } g(\Lambda_k) = \Lambda_{k+n} \; \text{for all}  \; k \}.
\end{displaymath}
Such an integer~$n$ is then unique and denoted $v_\Lambda(g)$.
The map $g \mapsto v_{\Lambda}(g)$ is a morphism $\fK(\Lambda) \to \bZ$ whose kernel $U(\fA)$ is the unit group of~$\fA$. The unit groups of hereditary $\mO_F$-orders in~$A$ are precisely the parahoric subgroups of~$G = A^\times$. As in \cite[1.2]{SecherreI}, this set-up defines a bijection $\Lambda \mapsto \fP_0(\Lambda)$ from lattice \emph{chains} up to translation in $\bZ$ to hereditary orders in $A$. It follows that the normalizer of a lattice chain coincides with the normalizer in~$G$ of the corresponding hereditary order.

Let $E/F$ be a field extension in~$A$. An $\mO_D$-lattice sequence~$\Lambda$ in~$V$ is called \emph{$E$-pure} if $E^\times \subseteq \fK(\Lambda)$. 
This condition is equivalent to~$\Lambda$ being an~$\mO_E$-lattice sequence when~$V$ is viewed as an~$E$-vector space.
Denote by~$B = Z_A(E)$ the commutant of~$E$ in~$A$. This is a central simple algebra over~$E$, whose $E$-dimension we denote by $n'^2$. 
Then~$B$ is $E$-isomorphic to $M_{m'}(D_E)$ for some central division $E$-algebra $D_E$ of some $E$-dimension $d'^2$, and we have the identities 
\begin{displaymath}
n' = \frac{n}{[E:F]}, d' = \frac{d}{(d, [E:F])}, m'd' = n'
\end{displaymath}
as in~\cite[2.1.1]{BHJL}. 
(Here, the notation $(d, [E:F])$ stands for the highest common factor of~$d$ and~$[E:F]$.)

The need to consider general lattice sequences instead of focusing on chains, which can be done in the split case, arises from the behaviour of filtrations of hereditary orders attached to $E$-pure sequences under intersection with~$B$, $\fA \mapsto \fA \cap B$. Upon fixing a simple left $B$-module~$V_E$, one has the following result.

\begin{thm}\label{puresequences} \cite[Theorem~1.4]{SecherreStevensIV}. Given an $E$-pure lattice sequence $\Lambda$ in $V$, there exists an $\mO_{D_E}$-lattice sequence $\Gamma$ in $V_E$ such that
\begin{displaymath}
\fP_k(\Lambda) \cap B = \fP_k(\Gamma) \; \text{for all} \; k \in \bZ,
\end{displaymath}
and the normalizer $\fK(\Gamma)$ equals $\fK(\Lambda) \cap B^\times$.
Such a sequence~$\Gamma$ is unique up to translation.
\end{thm}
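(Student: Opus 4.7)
I would organize the proof into three parts: uniqueness, existence, and the normalizer equality.

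\emph{Uniqueness.} Any $\fo_{D_E}$-lattice sequence $\Gamma$ in $V_E$ is determined up to translation by the filtration $\bigl(\fP_k(\Gamma)\bigr)_{k \in \bZ}$ of $B$: the hereditary $\fo_E$-order $\fP_0(\Gamma)$ pins down the underlying lattice chain $\Gamma^\flat$ (obtained by deleting repetitions) up to translation, while the pattern of strict descents $\fP_k(\Gamma) \supsetneq \fP_{k+1}(\Gamma)$ records the multiplicities needed to reconstruct $\Gamma$ from $\Gamma^\flat$. So two candidate sequences satisfying the intersection equality automatically agree up to translation.

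\emph{Existence.} I would first dispose of the chain case. If $\Lambda$ is an $E$-pure chain, then $\fA = \fP_0(\Lambda)$ contains $\fo_E$ by $E$-purity, and by a classical argument $\fB := \fA \cap B$ is a hereditary $\fo_E$-order in $B$. It corresponds to a chain $\Gamma$ in $V_E$, unique up to translation, and the equality $\fP_k(\Lambda) \cap B = \fP_k(\Gamma)$ holds for $k = 0$ by construction, for $k = 1$ because $\fP(\fA) \cap B$ coincides with the Jacobson radical $\fP(\fB)$, and for general $k$ by multiplying by a uniformizer of $E$. For an arbitrary $E$-pure sequence $\Lambda$, I would lift the jump pattern of $\Lambda$ to $V_E$ via the Morita-type isomorphism $V \cong V_E \otimes_{D_E} W$ coming from the decomposition of $V$ as a $B$-module, where $W$ is a suitable $D_E$-$D$-bimodule. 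Fixing an $\fo_D$-lattice $W_0 \subset W$ stable under $\fo_{D_E}$, I would set
\[\Gamma_k = \bigl\{\, v \in V_E : v \otimes W_0 \subseteq \Lambda_k \,\bigr\}.\]
A direct check then shows that $\Gamma$ is an $\fo_{D_E}$-lattice sequence and that $\fP_k(\Gamma) = \fP_k(\Lambda) \cap B$ for all $k$.

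\emph{Normalizer equality.} The inclusion $\fK(\Gamma) \subseteq \fK(\Lambda) \cap B^\times$ is immediate from the intersection formula. The converse is \textbf{the main obstacle}: given $g \in \fK(\Lambda) \cap B^\times$ with $v_\Lambda(g) = n$, conjugation by $g$ shifts $\fP_k(\Gamma)$ by $n$, which forces $g$ to normalize the chain $\Gamma^\flat$; but upgrading this to the statement that $g$ shifts the $\Gamma_k$ themselves, with a precisely determined valuation $v_\Gamma(g)$, requires a careful matching of the multiplicity patterns of $\Lambda$ and $\Gamma$. I would use that the shift on $\fP_\bullet(\Gamma)$ together with the $B^\times$-action on $V_E$ single out $g \Gamma_k$ as a specific lattice in the $B^\times$-orbit of $\Gamma_k$, which must be $\Gamma_{k + v_\Gamma(g)}$ in order for the formula to hold uniformly in $k$.
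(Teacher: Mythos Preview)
The paper does not contain a proof of this theorem: it is quoted verbatim as Theorem~1.4 of S\'echerre--Stevens~IV and used as a black box, with no argument given. So there is no ``paper's own proof'' to compare your proposal against.

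That said, a brief comment on your sketch. The overall architecture (uniqueness from the filtration, existence via a Morita-type transfer, normalizer equality last) matches the way this result is established in the literature. Two places in your outline would need tightening before it stands on its own. First, in the chain case you write ``for general $k$ by multiplying by a uniformizer of~$E$'': this only gives the equality at multiples of the $\fo_F$-period of~$\Lambda$ divided by $e(E/F)$, not at every~$k$, because the $\fo_E$-period of~$\Gamma$ need not match the $\fo_F$-period of~$\Lambda$; one has to track the jump indices more carefully, which is exactly where the non-strict sequence phenomenon enters. Second, in the general case your construction $\Gamma_k = \{ v \in V_E : v \otimes W_0 \subseteq \Lambda_k \}$ depends on the choice of~$W_0$, and you have not argued that a suitable $\fo_{D_E}$-stable $\fo_D$-lattice in~$W$ exists or that the resulting~$\Gamma$ is independent of this choice up to translation; in practice this step is handled via the Broussous--Lemaire embedding of buildings, which is what S\'echerre and Stevens invoke.
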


The sequence~$\Gamma = \tr_B\Lambda$ is called the \emph{trace} of the lattice sequence~$\Lambda$, and~$\Lambda$ is called the \emph{continuation} of~$\Gamma$. 
Notice that the theorem does not say that every $\mO_{D_E}$-lattice sequence has a continuation: this doesn't necessarily hold (see~\cite[Exemple~1.6]{SecherreStevensIV}). 
Usually, the symbol $\fB$ will denote the hereditary order~$\fA \cap B = \fP_0(\Gamma)$.

When $a, b \in \bZ$, we can rescale a lattice sequence $\Lambda$ to a new lattice sequence defined by
\begin{displaymath}
a\Lambda + b : k \mapsto \Lambda_{\left \lceil{\frac{k-b}{a}}\right \rceil}.
\end{displaymath}
The set of these sequences is called the \emph{affine class} of $\Lambda$. 
If $\Lambda = a\Lambda_0$ for a lattice chain~$\Lambda_0$, the sequence~$\Lambda$ will be called a \emph{multiple} of~$\Lambda_0$: in this case we have $\fK(\Lambda) = \fK(\Lambda_0)$, and what changes is the filtration on this group. 
The map $\Lambda \mapsto \tr_B(\Lambda)$ preserves affine classes. 
One cannot say much about the trace of an arbitrary sequence---for instance, the trace of a chain need not be a chain, see~\cite[Section~6]{BLbuilding}---but the following result on preimages holds.

\begin{pp}\label{uniformsequences}
Assume~$\Lambda$ is an $E$-pure lattice sequence in~$V$ whose trace~$\Gamma = a\Gamma_0$ is a multiple of a uniform chain~$\Gamma_0$ of $\mO_{E}$-period~$r$. Then $\Lambda$ is a multiple of a uniform chain of $\mO_D$-period~$\frac{re(E/F)}{(d, re(E/F))}$.
\end{pp}
\begin{proof}
By~\cite[Proposition~II.5.4]{BLbuilding}, if~$\Gamma$ is a multiple of a uniform chain then so is~$\Lambda$. By~\cite[Th\'eor\`eme~1.7]{SecherreStevensIV} and its proof, there exists a unique chain~$\Lambda_0$ in~$V$ whose trace is a multiple of~$\Gamma_0$, and the $\mO_D$-period of~$\Lambda_0$ is~$re(E/F)/(d, re(E/F))$ (see also~\cite[Lemma~4.18]{BSSV}). The claim now follows as~$\Lambda$ is a multiple of some chain, which must be~$\Lambda_0$, since~$\tr_B$ commutes with scaling.
\end{proof}

\paragraph{Simple characters.} 
We only discuss simple characters attached to simple strata of the form~$[\fA, \beta]$, consisting of a principal $\mO_F$-order~$\fA$ in~$A$ attached to a lattice chain~$\Lambda$ in~$V$, and an element $\beta \in A$ generating a field $E = F[\beta]$, such that $E^\times \subseteq \fK(\Lambda)$ and the condition
\begin{displaymath}
k_0(\beta, \fA) < 0
\end{displaymath} 
on the critical exponent holds (see for instance~\cite{SecherreI} for a detailed exposition of the theory). 
We follow~\cite{BHeffective} in shortening notation to $[\fA, \beta]$ for what is otherwise denoted $[\fA, -v_\Lambda(\beta), 0, \beta]$, as these are the only strata which will show up in what follows.

As in \cite[Proposition~3.42]{SecherreI} and \cite[Section~2.5]{BHJL}, there exist $\mO_F$-orders $\fh(\beta, \fA) \subseteq \fj(\beta, \fA) \subseteq \fA$ attached to a simple stratum $[\fA, \beta]$ in~$A$, with a filtration by ideals denoted $\fh^k(\beta, \fA)$, respectively $\fj^k(\beta, \fA)$. 
There are compact open subgroups $H(\beta, \fA) = \fh(\beta, \fA)^\times$ and $J(\beta, \fA) = \fj(\beta, \fA)^\times$, with filtrations by subgroups
\begin{align*}
J^k(\beta, \fA) &= J(\beta, \fA) \cap U^k(\fA) = 1+\fj^k(\beta, \fA)\\
H^k(\beta, \fA) &= H(\beta, \fA) \cap U^k(\fA) = 1+\fh^k(\beta, \fA).
\end{align*}
These groups are normalized by $J(\beta, \fA)$ and by $\fK(\fA) \cap B^\times$, $H^k$ is normal in $J^k$ and the quotients $J^k / H^k$ are finite-dimensional vector spaces over~$\bF_p$ (see \cite[Proposition~4.3]{SecherreI}). The inclusion induces isomorphisms $\fB/\fP_1(\fB) \to \fj(\beta, \fA)/\fj^1(\beta, \fA)$ and $U(\fB)/U^1(\fB) \to J(\beta, \fA)/J^1(\beta, \fA)$. 

The group $H^1(\beta, \fA)$ carries a distinguished finite set $C(\fA, \beta)$ of \emph{simple characters}, which is fundamental for the construction of types, and is defined and studied in~\cite{SecherreI} and~\cite[Section~2]{SecherreStevensIV}. 
These references treat the more general case of simple characters of positive level, which form a set $C(\fA, m, \beta)$: one has $C(\fA, \beta) = C(\fA, 0, \beta)$. The definition of~$C(\fA, \beta)$ also depends on the choice of an additive character~$\psi$ of~$F$, which is fixed throughout. Since the group $H^1(\beta, \fA)$ is a pro-$p$ group, these characters are valued in $\mu_{p^\infty}(R)$.
Hence there is a canonical bijection from the simple characters over~$\cbQ_\ell$ to those over~$\cbF_\ell$, given by reduction mod~$\ell$, whenever~$\ell \not = p$ is a prime number. 

Simple characters have various kinds of ``intertwining implies conjugacy" properties. 
In full generality, one has the following result, which can be strengthened in the split case (see \cite[Theorem~3.5.11]{BKbook} and \cite[2.6]{BHeffective}). 
In order to state it, we need the notion of an \emph{embedding} in~$A$, namely a pair $(E, \Lambda)$ where~$E$ is a field extension of~$F$ in~$A$, and~$\Lambda$ is an~$E$-pure $\mO_D$-lattice sequence in~$V$. 
Two embeddings are \emph{equivalent} if there exists~$g \in A^\times$ such that~$\Lambda_1$ and~$g\Lambda_2$ coincide up to translation, the maximal unramified extensions~$E_i^{\mathrm{ur}, d}$ of~$F$ in~$E_1$ and~$E_2$ of degree dividing~$d$ are isomorphic, and $\ad(g) E_2^{\mathrm{ur}, d} = E_1^{\mathrm{ur}, d}$.
Two simple strata $[\fA_i, \beta_i]$ have the same \emph{embedding type} if the embeddings~$(F[\beta_i], \Lambda_i)$ are equivalent, where~$\Lambda_i$ is the chain attached to~$\fA_i$.

\begin{thm}\label{intertwiningimpliesconjugacy}\cite[Theorem~1.12]{BSSV} Given two simple strata $[\fA, \beta_i]$ with the same embedding type, and two simple characters $\theta_i \in C(\fA, \beta_i)$ which intertwine in $A^\times$, let $K_i$ be the maximal unramified extension of $F$ in $F[\beta_i]$. Then there exists $u \in \fK(\fA)$ such that
\begin{enumerate}
\item $K_2 = uK_1u^{-1}$
\item $H^1(\beta_2, \fA) = uH^1(\beta_1, \fA)u^{-1}$ and $\theta_1 = \ad(u)^*\theta_2$.
\end{enumerate}
\end{thm}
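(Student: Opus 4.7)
The plan is to split the required conjugation into two pieces: first align the maximal unramified subfields using the embedding type hypothesis, then conjugate the simple characters inside the centralizer of the common unramified part, and finally compose the two steps.

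First I would exploit the equivalence of embeddings $(F[\beta_1], \Lambda_1) \sim (F[\beta_2], \Lambda_2)$: by definition it yields $g_0 \in A^\times$ such that $g_0 K_1 g_0^{-1} = K_2$ and such that $g_0\Lambda_1$ coincides with $\Lambda_2$ up to translation. Since both lattice chains define the same hereditary order $\fA$, the element $g_0$ normalizes $\fA$, so $g_0 \in \fK(\fA)$. Conjugation by $g_0$ carries $C(\fA, \beta_1)$ bijectively to $C(\fA, g_0\beta_1g_0^{-1})$ and preserves the intertwining relation, so after this reduction we may assume $K_1 = K_2 =: K$.

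At this point I would pass to the central simple $K$-algebra $C = Z_A(K)$. By the standard compatibility of the $C(-, -)$ construction with centralizers (which is exactly theorem~\ref{puresequences} combined with the behaviour of simple characters under restriction to $B^\times$ described in \cite{SecherreStevensIV}, section~2), each restricted pair $[\fA \cap C, \beta_i]$ is a simple stratum in $C$ whose simple character is the restriction of $\theta_i$ to $H^1(\beta_i, \fA) \cap C^\times$, and these strata are totally ramified over $K$. I would then use the explicit double-coset description of the intertwining set of a simple character, namely $I_{A^\times}(\theta_1, \theta_2) = J^1(\beta_1, \fA)\, I_0\, J^1(\beta_2, \fA)$ with $I_0$ consisting of elements normalizing $K$, in order to modify the abstract intertwiner by $J^1$-elements and land in $C^\times \cap \fK(\fA)$. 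Applying an ``intertwining implies conjugacy'' statement in the totally ramified situation inside $C$ now yields $u_1 \in \fK(\fA) \cap C^\times$ conjugating the restricted simple characters, hence (since the groups $H^1(\beta_i, \fA)$ are recovered by adjoining to the restricted $H^1 \cap C^\times$ the higher filtration subgroups $H^k$, which are canonical pieces of $\fA$) conjugating $\theta_1$ to $\theta_2$ on the nose. Setting $u = u_1 g_0$ answers the original question.

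The main obstacle will be the second step: while the intertwining set of a simple character admits a clean description in the split case due to Bushnell--Kutzko, the non-split analogue and the control of representatives modulo $J^1$ are delicate, and form the heart of the S\'echerre--Stevens machinery. Showing that the intertwining element can actually be pushed into the centralizer of the \emph{full} unramified subfield $K$, rather than some smaller unramified subfield, is where the embedding type hypothesis is used essentially; without it one cannot guarantee that the $K_i$ are globally conjugate in $A^\times$ to begin with, and so the reduction to the totally ramified case collapses.
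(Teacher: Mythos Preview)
The paper does not contain a proof of this theorem: it is stated with a citation to \cite{BSSV}, Theorem~1.12, and used as a black box. So there is no argument in the paper to compare your sketch against.

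That said, there is a genuine issue in your first step that you should be aware of. You write that the equivalence of embeddings ``by definition yields $g_0 \in A^\times$ such that $g_0 K_1 g_0^{-1} = K_2$.'' But look again at the definition given just before the theorem: two embeddings $(E_i,\Lambda_i)$ are equivalent when some $g$ aligns the lattice sequences up to translation \emph{and} conjugates the maximal unramified extensions of~$F$ in~$E_i$ \emph{of degree dividing~$d$}. These subfields can be strictly smaller than the full $K_i = F[\beta_i]^{\mathrm{ur}}$; in the split case $d=1$ they are just~$F$, so the embedding-type hypothesis says nothing at all about conjugating the $K_i$, yet the conclusion of the theorem still demands $uK_1u^{-1}=K_2$. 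In other words, producing an element that conjugates the full unramified subfields is part of what must be \emph{proved}, not an input from the hypothesis. Your reduction to the totally ramified case inside $Z_A(K)$ therefore does not get off the ground as written. The actual argument in \cite{BSSV} uses the embedding-type hypothesis more subtly, via a transfer to the split situation and the rigidity results for simple characters there; the alignment of the $K_i$ comes out of the analysis rather than being the starting point.
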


\paragraph{Endo-classes.} 
Consider now all the groups $\GL_n(F)$ and their inner forms~$\GL_m(D)$ for varying~$n$, and the set of all simple characters of these groups. There is an equivalence relation on this set, called \emph{endo-equivalence}, which is defined and discussed in~\cite{BHliftingI} in the split case and~\cite{BSSV} in general. An \emph{endo-class} of simple characters over~$F$ is an equivalence class for this equivalence relation. The endo-class of a simple character~$\theta$ will be denoted~$\cl(\theta)$. Again, we identify endo-classes of simple $\cbQ_\ell$-characters and simple $\cbF_\ell$-characters.

It is important to notice that we might have two endo-equivalent simple characters~$\theta_i$ of endo-class~$\Theta_F$, defined by simple strata $[\fA_i, \beta_i]$, in which the extensions~$F[\beta_i]$ of~$F$ are not isomorphic. However, by~\cite[8.11]{BHliftingI} and \cite[Lemma~4.7]{BSSV}, they will have the same ramification index and residue class degree. The degrees $F[\beta_i]/F$ therefore also coincide. These are invariants of~$\Theta_F$, which which will be denoted $e(\Theta_F), f(\Theta_F)$ and~$\delta(\Theta_F)$ respectively.

A simple character in~$A$ is \emph{maximal} if it can be defined by a stratum $[\fA, \beta]$ such that $\fB = \fA \cap Z_A(F[\beta])$ is a maximal $\mO_{F[\beta]}$-order in~$Z_A(F[\beta])$. Such a stratum will be called a \emph{maximal simple stratum}. 
By proposition~\ref{samecharacter} below, maximality does not depend on the choice of a stratum defining~$\theta$. 
Recall (see~\cite[Definition~1.14]{BSSV}) that a simple stratum $[\fA, \beta]$ is \emph{sound} if~$\fB$ is a principal $\mO_F$-order and $\fK(\fA) \cap B^\times = \fK(\fB)$.

\begin{pp}\label{soundness}
Maximal simple strata are sound.
\end{pp}
\begin{proof}
Let $[\fA, \beta]$ be a maximal simple stratum, corresponding to a lattice chain~$\Lambda$ in~$V$. By definition, $\fA \cap B = \fB$ is a maximal order in~$B$. Consider the trace $\Gamma = \tr_B(\Lambda)$. Then $\fP_0(\Gamma) = \fB$, and it follows that the chain~$\Gamma_0$ associated to~$\Gamma$ is principal of period~1. So necessarily $\Gamma = t\Gamma_0$ for some positive integer~$t$. 
It follows that $\fK(\fB) = \fK(\Gamma_0)$ is actually equal to $\fK(\Gamma)$ (but the filtration on it may change). 
Since $\fK(\Gamma) = \fK(\Lambda) \cap B^\times$ by definition, we have $\fK(\fA) \cap B^\times = \fK(\fB)$, that is, the stratum $[\fA, \beta]$ is sound.
\end{proof}

The relation of endo-equivalence between maximal simple characters in the same group takes on a simple form: it coincides with conjugacy.
\begin{pp}\label{conjugateembeddings}
Let $[\fA_i, \beta_i]$ be maximal simple strata in the same central simple algebra~$A$ over~$F$, defining endo-equivalent maximal simple characters~$\theta_i$. 
Then~$[\fA_1, \beta_1]$ and~$[\fA_2, \beta_2]$ have the same embedding type and~$\theta_1$ and~$\theta_2$ are conjugate in~$A^\times$.
\end{pp}

\begin{proof}
Write~$B_{\beta_i} = Z_A(F[\beta_i])$, and let~$\Lambda_i$ be the lattice chains in~$V$ corresponding to the~$\fA_i$. 
By the Skolem--Noether theorem there exists~$x \in A^\times$ conjugating the maximal unramified extensions of~$F$ in~$F[\beta_i]$, as they have the same degree~$f(\Theta_F)$ over~$F$, so we can assume that they both coincide with a subfield~$E$ of~$A$. 
Because the orders~$\fB_i = \fA_i \cap B_{\beta_i}$ are maximal, there are extensions of~$F[\beta_i]$ in~$B_{\beta_i}$ which have maximal degree, are unramified, and normalize the~$\fA_i$. To see this, observe that~$\fB_i^\times$ is a maximal compact subgroup of~$B_{\beta_i}^\times$. Choose any maximal unramified extension~$L_i$ of~$F[\beta_i]$ in~$B_{\beta_i}$, so that~$\mO_{L_i}^\times$ is contained up to conjugacy in~$\fB_i^\times$. Since $L_i^\times = \pi_{F[\beta_i]}^\bZ \times \mO_{L_i}^\times$, we have $L_i^\times \subseteq \fK(\fB_i)$. By proposition~\ref{soundness}, we have $\fK(\fA_i) \cap B_{\beta_i}^\times = \fK(\fB_i)$ and so $L_i^\times \subseteq \fK(\fA_i)$.

To prove that~$[\fA_i, \beta_i]$ have the same embedding type, it is enough to prove that $\tr_{Z_A(E)}(\Lambda_i)$ are conjugate under $Z_A(E)^\times$ (up to translation), as then the same will hold for their continuations~$\Lambda_i$ by the uniqueness statement in theorem~\ref{puresequences}. 
The sequences~$\Delta_i = \tr_{Z_A(L_i)}(\Lambda_i)$ are both multiples of a chain of period~$1$, since~$Z_A(L_i) = L_i$ and so there are no other lattice sequences for~$Z_A(L_i)$.
By proposition~\ref{uniformsequences}, the sequence $\tr_{Z_A(E)}(\Lambda_i)$ is a multiple of a uniform chain~$\Gamma_i$, as its trace to~$L_i$ is~$\Delta_i$. Write~$\tr_{Z_A(E)}(\Lambda_i) = a_i\Gamma_i$. 
The $\mO_{D_E}$-period of~$\Gamma_i$ is equal to~$\frac{e(L_i/E)}{(d_{Z_A(E)}, e(L_i/E))}$, but $e(L_i/E) = e(F[\beta_i]/F)$ is independent of~$i$, hence~$\Gamma_1$ and~$\Gamma_2$ have the same $\mO_{D_E}$-period and the same~$\mO_E$-period, which we denote by~$t$. 
By the proof of~\cite[Theorem~1.7]{SecherreStevensIV}, the integer~$a_i$ then equals~$\frac{d}{(d, e(E/F)t)}=\frac{d}{(d, t)}$ and is independent of~$i$, and so the sequences $\tr_{Z_A(E)}(\Lambda_i)$ are conjugate under~$Z_A(E)^\times$ up to translation.
(By~\cite[Remark~1.8]{SecherreStevensIV} and the fact that the~$\Lambda_i$ are \emph{chains}, $a_i$ is equal to what is denoted~$\rho$ in that reference, and we are applying the formula for~$\rho$ that is given there.)

That~$\theta_1$ and~$\theta_2$ are conjugate now follows from theorem~\ref{intertwiningimpliesconjugacy}.
\end{proof}

\begin{pp}\label{samecharacter}
If $[\fA_1, \beta_1]$ and~$[\fA_2, \beta_2]$ are maximal simple strata in~$A$ defining the simple character~$\theta$, then $\fA_1 = \fA_2$ and $J^i(\beta_1, \fA_1) = J^i(\beta_2, \fA_2)$ for $i = 0, 1$.
\end{pp} 
\begin{proof}
To see that $J^i(\beta_1, \fA_1) = J^i(\beta_2, \fA_2)$ we argue as in~\cite[2.1.1]{BHeffective}. Namely, first we compute the normalizer $\bJ(\theta)$ of~$\theta$ in~$G$, as follows. We know from~\cite[Proposition~2.3]{SecherreII} that the intertwining of~$\theta$ in~$G$ is~$J(\beta, \fA)B_\beta^\times J(\beta, \fA)$, for any maximal simple stratum $[\fA, \beta]$ defining~$\theta$, and that $\fK(\fB_\beta)J(\beta, \fA)$ normalizes~$\theta$ (using that $\fK(\fB_\beta) = \fK(\fA) \cap B_\beta$ by proposition~\ref{soundness}). Now assume that $g \in B_\beta^\times$ normalizes~$\theta$. 
Then it normalizes $H^1(\beta, \fA) \cap B_\beta^\times = U^1(\fB_\beta)$ (this equality is claimed in~\cite{SecherreII} after Remarque~2.4). 
But the normalizer of $U^1(\fB_\beta)$ in~$B_\beta^\times$ equals $\fK(\fB_\beta)$, by the argument in~\cite[1.1]{BKbook}, hence $g \in \fK(\fB_\beta)$ and so the normalizer~$\bJ(\theta)$ equals~$\fK(\fB_\beta)J(\beta, \fA)$.

Since~$\fK(\fB_\beta) = D_{F[\beta]}^\times \fB_\beta^\times$, we see that $\bJ(\theta)$ has a unique maximal compact subgroup~$J_\theta$, which equals~$J(\beta, \fA)$ for any maximal simple stratum~$[\fA, \beta]$ defining~$\theta$, and~$J_\theta$ has a unique subgroup~$J^1_\theta$ that is maximal amongst its normal pro-$p$ subgroups, which is then equal to~$J^1(\beta, \fA)$. This recovers the groups $H^1, J^1$ and~$J$ intrinsically to~$\theta$.

By proposition~\ref{conjugateembeddings}, the strata $[\fA_i, \beta_i]$ have the same embedding type, hence for some $g \in G$ we have $g\fA_1 g^{-1} = \fA_2$. The characters $g^*\theta$ and~$\theta$ intertwine, hence there exists $u \in \fK(\fA_1)$ conjugating them, by theorem~\ref{intertwiningimpliesconjugacy}. So $(gu)^*\theta = \theta$, and then $gu \in \bJ(\theta)$ and conjugates~$\fA_1$ to~$\fA_2$. But $\bJ(\theta)$ normalizes~$\fA_1$, as it equals $(\fK(\fA_1) \cap B_{\beta_1})J_\theta$, and so $\fA_1 = \fA_2$, as $gu$ normalizes~$\fA_1$ and at the same time it conjugates it to~$\fA_2$.
\end{proof}

\begin{defn}
By proposition~\ref{samecharacter}, the groups $H^1(\beta, \fA), J^1(\beta, \fA)$ and~$J(\beta, \fA)$ for a simple stratum $[\fA, \beta]$ defining a maximal simple character~$\theta$ only depend on~$\theta$, and will be denoted $H^1_\theta, J^1_\theta$ and~$J_\theta$. 
\end{defn}

\paragraph{Lifting endo-classes.}
The endo-classes of~$F$ can be lifted and restricted through tamely ramified field extensions $E/F$. 
More precisely, in~\cite{BHliftingI} there is defined a restriction map
\begin{displaymath}
\Res_{E/F} : \mathcal{E}(E) \to \mathcal{E}(F)
\end{displaymath}
from the set of endo-classes of simple characters of~$E$ to those for~$F$. 
It is surjective, and its fiber over a given endo-class~$\Theta_F$ consists by definition of the set of $E$-lifts of~$\Theta_F$. 
We will need some details as to how the lifting can be performed in practice, in the unramified case.

\begin{pp}\label{interiorlifting}(\!\!\cite[Section~7]{BHliftingI} and~\cite[Sections~5 and~6]{BSSV}.)
Let~$\theta$ be a maximal simple character in~$A$ defined by the simple stratum $[\fA, \beta]$, with endo-class~$\Theta_F$. Let~$K$ be an unramified extension of~$F$ in~$A$ such that~$\beta$ commutes with~$K$ and generates a field extension of~$K$ in~$A_K = Z_A(K)$, and $K[\beta]^\times \subseteq \fK(\fA)$. Then $\theta_K = \theta|H^1_\theta \cap A_K$ is a simple character, with
\begin{align*}
H^1_{\theta_K} &= H^1_\theta \cap A_K \\
J^1_{\theta_K} &=  J^1_\theta \cap A_K \\
J_{\theta_K} &=  J_\theta \cap A_K.
\end{align*}
These groups will be denoted $H^1_K, J^1_K$ and~$J_K$ respectively. The character~$\theta_K$ is called the \emph{interior $K$-lift} of~$\theta$. Its endo-class $\Theta_K = \cl(\theta_K)$ is a $K$-lift of~$\Theta_F$.
\end{pp}

Finally, we state a compatibility of endo-classes with automorphisms of the base field. 
More generally, if $\alpha: F_1 \to F_2$ is a continuous isomorphism between local fields, it induces a pullback
\begin{equation}\label{pullbackendo-classes}
\alpha^*: \mathcal{E}(F_2) \to \mathcal{E}(F_1)
\end{equation}
on the sets of endo-classes. When a central simple algebra~$A$ over~$F_2$ is given, together with a simple character~$\theta$ in~$A^\times$, one can regard~$A$ as a central simple $F_1$-algebra via~$\alpha$, and then $\cl_{F_1}(\theta)$, the endo-class of~$\theta$ as a simple character over~$F_1$, is equal to~$\alpha^*\cl_{F_2}(\theta)$. The functoriality property
\begin{displaymath}
(\alpha_1\alpha_2)^* = \alpha_2^*\alpha_1^*
\end{displaymath}
also holds. It follows that the group of continuous automorphisms of~$F$ acts to the right on the set~$\mathcal{E}(F)$ of endo-classes of~$F$. The action will be denoted $g: \Theta_F \mapsto \Theta_F^g = g^*\Theta_F$.

\paragraph{Moving from~$H$ to~$J$.}

Let~$\theta$ be a maximal simple character in~$G$. By~\cite[Proposition~2.1]{MStypes}, there exists a unique irreducible representation~$\eta = \eta(\theta)$ of~$J^1_\theta$ which contains~$\theta$, called the \emph{Heisenberg representation} attached to~$\theta$. The dimension of~$\theta$ is a power of~$p$ and the restriction~$\eta | H^1_\theta$ is a multiple of~$\theta$, and~$\theta$ and~$\eta(\theta)$ have the same $G$-intertwining. By~\cite[Section~2.4]{MStypes}, there exists an extension of~$\eta$ to~$J_\theta$ with the same $G$-intertwining as~$\theta$ and~$\eta$, called a \emph{$\beta$-extension} or \emph{$\beta$-extension} of~$\eta$. By~\cite[Th\'eor\`eme~2.28]{SecherreII} and \cite[2.2]{MStypes} we know that the group of characters of~$\be^\times$ is transitive on the set of $\beta$-extensions of~$\eta$, by the twisting action 
\begin{equation}\label{twistbeta}
\chi: \kappa \mapsto \kappa \otimes (\chi \circ \nu_B)
\end{equation}
where~$\chi : \be^\times \to R^\times$ has been inflated to $\mO_E^\times$, and $\nu_B : \fB^\times \to \mO_E^\times$ is the reduced norm.

\begin{pp}\label{p-primarywideextension}
There exists exactly one $\beta$-extension~$\kappa$ of~$\eta$ to~$J_\theta$ such that the determinant character of~$\kappa$ has order a power of~$p$. We will refer to~$\kappa$ as a \emph{$p$-primary $\beta$-extension}.
\end{pp}
\begin{proof}
Write $E = F[\beta]$. Fix an $E$-linear isomorphism
\begin{displaymath}
\Phi: B \to M_{m'}(D')
\end{displaymath}
where~$D'$ is a central division algebra of reduced degree~$d'$ over~$E$, such that the order~$\fB$ gets mapped to~$M_{m'}(\mO_{D'})$. We then get an isomorphism $\Phi: J_\theta/J^1_\theta \to \fB^\times/U^1(\fB) \to \GL_{m'}(\bd')$, for~$\bd'$ the residue field of~$D'$, via~$\Phi$ above and the inverse of the isomorphism $\fB^\times/U^1(\fB) \to J_\theta/J^1_\theta$ induced by the inclusion.

Let~$\kappa$ be a $\beta$-extension of~$\eta$. The determinant character $\det \kappa$ has prime-to-$p$ part~$(\det \kappa)^{(p)}$ that is trivial on the pro-$p$ group $J^1_\theta$, hence $(\det \kappa)^{(p)}$ is the inflation to~$J_\theta$ of a character~$\gamma$ of~$\bd'^\times$ through the determinant of~$\GL_{m'}(\bd')$ and the isomorphism~$\Phi$. Assume that $\gamma$ is norm-inflated from $\be^\times$. Observe that $\det(\kappa \otimes (\chi \circ \nu_B)) = \det(\kappa)(\chi^{\dim \kappa} \circ \nu_B)$. Now since $\dim \kappa$ is a power of~$p$ and the character group of $\be^\times$ has order prime to~$p$, there exists a unique~$\chi$ such that $\chi^{-\dim \kappa} \circ \nu_B |_J = \det \kappa^{(p)}$, and the claim follows.

So it's enough to prove that $\gamma$ is norm-inflated from~$\be^\times$, which happens if and only if~$\gamma$ is stable under~$\Gal(\bd' / \be)$. If~$\pi_{D'}$ is a uniformizer of~$D'$, its conjugacy action on~$\fB^\times$ induces under~$\Phi$ the Frobenius automorphism on matrix entries, so it's enough to prove that the restriction of~$(\det \kappa)^{(p)}$ to~$\fB^\times$ is normalized by~$\pi_{D'}$.
This is true because~$B^\times$ intertwines~$\kappa$, hence it intertwines both~$\det \kappa$ and $(\det \kappa)^{(p)}$ with themselves.
\end{proof}

\paragraph{Maximal simple types.} Fix a maximal simple character~$\theta$ in~$G$, with corresponding Heisenberg representation~$\eta$. 
Let~$\kappa$ be a $\beta$-extension of~$\eta$ to~$J_\theta$. 
Let~$\sigma$ be a cuspidal irreducible representation of~$J_\theta/J^1_\theta$, and define~$\lambda = \sigma \otimes \kappa$. 
A pair $(J_\theta, \lambda)$ arising thus is called a \emph{maximal simple type} in~$G$. 
Over the complex numbers, these are types for the supercuspidal inertial classes of~$G$. 
The modular case is different, due to the fact that there may exist cuspidal non-supercuspidal representations, which will contain maximal simple types but will not exhaust their inertial class. 
However, the following result holds, for which see the introduction to~\cite{MStypes} and the references therein.

\begin{thm}\label{types}
Let~$\rho$ be an irreducible cuspidal representation of~$R[G]$. Then
\begin{enumerate}
\item $\rho$ contains a unique $G$-conjugacy class of maximal simple types.
\item if $(J, \lambda)$ is a maximal simple type contained in~$\rho$, then~$\lambda$ admits extensions to its normalizer~$\bJ(\lambda)$, and for precisely one such extension~$\Lambda$ the compact induction $\pi(\Lambda) = \cInd_{\bJ(\lambda)}^G\Lambda$ is isomorphic to~$\rho$.
\item two irreducible cuspidal representations~$\rho_i$ contain the same maximal simple types if and only if they are unramified twists of one another.
\end{enumerate}
\end{thm}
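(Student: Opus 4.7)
The plan is to treat this as a consolidation of known facts from \cite{MStypes} and its predecessors (especially \cite{SecherreII}, \cite{BKbook}), and organize the argument around the three structural claims in turn. Since a maximal simple type $(J_\theta, \lambda)$ with $\lambda = \sigma \otimes \kappa$ is built from a maximal simple character $\theta$, a $\beta$-extension $\kappa$ and a cuspidal $\sigma \in \widehat{J_\theta/J^1_\theta}$, and since the groups $H^1_\theta, J^1_\theta, J_\theta$ are intrinsic to $\theta$ by proposition~\ref{samecharacter}, the entire discussion can be reduced to manipulating these three ingredients.

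For part (1), existence is the standard statement that any irreducible cuspidal representation of $G$ contains a maximal simple character (obtained by descending to a level-zero situation after restricting to a sufficiently small compact open subgroup, as in Bushnell--Kutzko and its non-split extension). For uniqueness up to $G$-conjugacy, I would take two maximal simple types $(J_{\theta_i}, \lambda_i)$ contained in $\rho$; both $\theta_i$ occur in $\rho$, so the bi-$G$-module $\rho$ intertwines them, and by theorem~\ref{intertwiningimpliesconjugacy} applied after first arranging the $\theta_i$ to have the same embedding type (an application of proposition~\ref{conjugateembeddings}, given that endo-classes of simple characters in a cuspidal are determined by the representation) we conjugate to $\theta_1 = \theta_2$. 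Proposition~\ref{samecharacter} then gives $J_{\theta_1} = J_{\theta_2}$, so the $\kappa_i$ are both $\beta$-extensions of the same Heisenberg representation $\eta$, differing by a twist $\chi \circ \nu_B$ for some character $\chi$ of $\be^\times$. Since $\rho$ contains both $\sigma_i \otimes \kappa_i$, the isomorphism class of $\sigma_i \otimes \kappa_i$ as a representation of $J_\theta$ is determined by $\rho$, and absorbing the twist into $\sigma_i$ shows $\lambda_1 \cong \lambda_2$, hence conjugacy.

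For part (2), the normalizer $\bJ(\lambda)$ contains $J_\theta$ as a normal subgroup and the quotient $\bJ(\lambda)/J_\theta$ is a finitely generated free abelian group (one checks this from $\bJ(\theta) = \fK(\fB_\beta)J_\theta$ and the structure of the cuspidal $\sigma$). Since $\lambda$ is irreducible, Clifford theory provides extensions to $\bJ(\lambda)$, any two differing by a character of the abelian quotient, which is an unramified character of $G$ pulled back via $\nu$. Compact induction then gives a family $\{\pi(\Lambda)\}$ parametrized by such extensions, and one verifies (as in~\cite{SecherreII}) that each $\pi(\Lambda)$ is irreducible cuspidal, that two induced representations are isomorphic iff the $\Lambda$'s agree, and that the resulting family is precisely the unramified twist orbit of any single $\pi(\Lambda)$. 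Exactly one member is then isomorphic to $\rho$.

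For part (3), the ``if'' direction is immediate: unramified twists act on extensions $\Lambda$ by twisting, preserving the restriction $\lambda$ on the pro-$p$ piece. Conversely, if $\rho_1$ and $\rho_2$ both contain $(J_\theta, \lambda)$, then by (2) they correspond to two extensions $\Lambda_1, \Lambda_2$ of $\lambda$ to $\bJ(\lambda)$, which differ by a character of $\bJ(\lambda)/J_\theta$; this character extends uniquely to an unramified character $\chi \circ \nu$ of $G$, and $\rho_2 \cong \rho_1 \otimes (\chi \circ \nu)$. The main obstacle in executing this plan carefully is the modular setting: for $R$ of positive characteristic, cuspidal $\sigma$ need not be supercuspidal, so one must be careful that the type $(J_\theta, \lambda)$ only controls the cuspidal representation $\rho$ itself rather than an entire inertial class, and that extensions and twists behave well with integral structures. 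This is the content of \cite{MStypes}, which is the reference one invokes to complete the argument.
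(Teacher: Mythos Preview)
The paper does not prove this theorem: it is stated as a known result, with the sentence ``for which see the introduction to~\cite{MStypes} and the references therein'' standing in for the proof. So there is no proof in the paper to compare against; your sketch is in the spirit of that citation and is broadly consistent with the arguments one finds in \cite{MStypes}, \cite{SecherreII}, \cite{SecherreStevensIV} and \cite{BKbook}.

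That said, two small points in your outline deserve correction. First, the quotient $\bJ(\lambda)/J_\theta$ is not merely ``finitely generated free abelian'' but infinite cyclic: from the discussion after the theorem, $\bJ(\theta) = \pi_{D'}^{\bZ} \ltimes J_\theta$ and $\bJ(\lambda)$ has finite index in $\bJ(\theta)$, so $\bJ(\lambda)/J_\theta \cong \bZ$. Second, in part~(3) you assert that a character of $\bJ(\lambda)/J_\theta$ ``extends uniquely to an unramified character $\chi \circ \nu$ of~$G$''. This is false in general: the map $\bJ(\lambda) \to \bZ$ given by the valuation of the reduced norm has image $n(\rho)\bZ$ for the torsion number~$n(\rho)$, so the extension exists (as~$R$ is algebraically closed) but is determined only up to an unramified character of order dividing~$n(\rho)$. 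The conclusion that $\rho_2$ is an unramified twist of~$\rho_1$ still follows, of course, but the uniqueness claim should be dropped. These are cosmetic issues and do not affect the validity of the overall strategy.
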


We will need some information on the structure of the normalizers~$\bJ(\lambda)$. 
If $(J_\theta, \lambda)$ is any maximal simple type arising from $\theta$ and~$\sigma$, and $[\fA, \beta]$ is a maximal simple stratum defining~$\theta$, by~\cite[Paragraph~3.4]{MStypes} the order~$s(\sigma)$ of the stabilizer of~$\sigma$ in~$\Gal(\bd'/ \be)$ equals the index of $F[\beta]^\times J_\theta$ in~$\bJ(\lambda)$. 
Fixing an isomorphism $B \to M_{m'}(D')$, we have $\bJ(\theta) = \fK(\fB)J_\theta = \pi_{D'}^\bZ \ltimes J_\theta$ for any uniformizer~$\pi_{D'}$ of~$D'$, and so the index of $\bJ(\lambda)$ in~$\bJ(\theta)$ equals the size~$b(\sigma)$ of the orbit of~$\sigma$ under~$\Gal(\bd'/\be)$.

\paragraph{Symplectic invariants.} 
We will work over the complex numbers until the end of this section, so we put~$R = \bC$.
Fix a maximal simple character~$\theta$ in~$G$. Then one has a well-defined map
\begin{displaymath}
J^1_\theta/H^1_\theta \times J^1_\theta/H^1_\theta \to \mu_p(\bC), (x, y) \mapsto \theta[x, y],
\end{displaymath}
where~$\mu_p(\bC)$ is the group of complex roots of unity of order~$p$ and $[x, y] = xyx^{-1}y^{-1}$. By~\cite[Proposition~2.3]{SecherreII}, this map is a symplectic form on the $\bF_p$-vector space $J^1_\theta/H^1_\theta$: it is alternating, $\bF_p$-bilinear and nondegenerate.
We are going to follow~\cite[Section~8]{BFbook} and~\cite{BHLLIII} in studying the Glauberman correspondence in this setting. 
As a consequence we will define two invariants of~$\theta$ that will appear in our character formulas, and study their behaviour under interior lifting of simple characters.
We introduce the notation
\[
N_\theta = \ker(\theta), Q_\theta = J^1_\theta/N_\theta, Z_\theta = H^1_\theta/N_\theta, V_\theta = J^1_\theta/H^1_\theta.
\]

\begin{pp}\label{Heisenbergrepresentation}\cite[8.3.3]{BFbook} 
There exists a unique irreducible representation~$\eta = \eta_\theta$ of~$Q_\theta$ which contains~$\theta$, called the \emph{Heisenberg representation} attached to~$\theta$. 
The dimension of~$\eta_\theta$ is a power of~$p$ and the restriction~$\eta_\theta | Z_\theta$ is a multiple of~$\theta$.
\end{pp}

Now choose a maximal simple stratum~$[\fA, \beta]$ defining~$\theta$.
Let~$B = Z_A(F[\beta])$ and $\fB = B \cap \fA$.
Write~$\eta$ for the corresponding Heisenberg representation of~$J^1(\beta, \fA)$, and let~$K^+$ be a maximal unramified extension of~$F[\beta]$ in~$B$, normalizing~$\fA$: this exists by the argument in the proof of proposition~\ref{conjugateembeddings}.
Let~$F \subset L \subset K^+$ be an intermediate field extension, unramified over~$F$, write~$A_L = Z_A(L)$, and recall from proposition~\ref{interiorlifting} that we have equalities
\[
J^1(\beta, \fA_L) = J^1(\beta, \fA) \cap A_L^\times \text{ and } H^1(\beta, \fA_L) = H^1(\beta, \fA) \cap A_L^\times,
\]
and~$\theta_L = \theta | H^1(\beta, \fA_L)$ is a simple character.
We will usually denote the maximal unramified extension of~$F$ in~$K^+$ by~$K$.

\begin{lemma}\label{maximallift}
$\theta_L$ is a maximal simple character.
\end{lemma}
\begin{proof}
Let~$\Lambda$ be the chain corresponding to~$\fA$.
It suffices to prove that the trace $\tr_{L[\beta]} \Lambda$ is a multiple of a uniform chain of $\mO_{D_{L[\beta]}}$-period one.
But~$\tr_{L[\beta]} \Lambda$ is the continuation of~$\tr_{K^+} \Lambda$, which is a multiple of a uniform chain of $\mO_{K^+}$-period one, since~$K^+ \subset A$ is a maximal subfield.
The claim follows by proposition~\ref{uniformsequences} since~$K^+/L[\beta]$ is unramified.
\end{proof}

The group~$\mu_K$ of roots of unity in~$K$ of order coprime to~$p$ acts by conjugation on~$Q_\theta$.
Let~$x \in \mu_K$ generate the extension~$L/F$, in the sense that~$L = F[x]$.
Then it need not be the case that~$\mu_L$ is generated by~$x$, but the argument in~\cite[Proposition~6]{BHLLIII} still implies that
\[
Q_{\theta_L} = J^1_{\theta_L}/\ker(\theta_L)
\]
coincides with the fixed point space~$Q_\theta^x$ of~$x$ acting on~$Q_\theta$.
Similarly, we have $V_{\theta_L} = V^x$.

\begin{pp}(Glauberman correspondence, \cite[Lemma~10]{BHLLIII}.)\label{pp:Glaubermancorrespondence}
View~$\eta$ as a representation of~$Q_\theta$. Then
\benum
\item there exists a unique irreducible representation $\tld \eta$ of~$\mu_K \ltimes Q_\theta$ such that
\[
\tld \eta | Q_\theta \cong \eta \text{ and } \det \tld \eta| \mu_K = 1.
\]
\item let $\Gamma \subset \mu_K$ be a subgroup. 
There is a unique irreducible representation~$\eta^\Gamma$ of~$Q_\theta^\Gamma$ such that $\eta^\Gamma| Z$ contains~$\theta$. There exists a constant $\epsilon_\theta(\Gamma) \in \{ \pm 1\}$ such that
\[
\tr \tld \eta(\gamma x) = \epsilon_\theta(\Gamma) \tr \eta^\Gamma(x) 
\]
for all $x \in Q_\theta^\Gamma$ and all generators~$\gamma$ of~$\Gamma$.
\item let~$x \in \mu_K$ and let~$L = F[x]$. Then the pullback of~$\eta^{\langle x \rangle}$ through the canonical map $J^1_{\theta_L} \to Q_{\theta_L} = Q_\theta^{\langle x \rangle}$ is isomorphic to the Heisenberg representation attached to~$\theta_L$. (This follows by the same argument as~\cite[Lemma~10(c)]{BHLLIII}.)
\eenum
\end{pp}

Given a subgroup~$\Gamma \subset \mu_K$, the sign~$\epsilon_\theta(\Gamma)$ can be described in terms of the symplectic representation~$V_\theta$ of~$\Gamma$.
More specifically, \cite[Section~3]{BHLLIII} associates to every symplectic~$\bF_p[\Gamma]$-module~$V$ a sign~$t^0_\Gamma(V)$ and a quadratic character~$t^1_\Gamma(V)$.
Then~\cite[Proposition~7]{BHLLIII} states that if~$z \in \Gamma$ is a generator of~$\Gamma$ then $\epsilon_\theta(\Gamma) = t^0_\Gamma(V_\theta) t^1_\Gamma(z, V_\theta)$.
It will be important for us that a slightly stronger result is true: by \cite[Proposition~5]{BHLLIII} we also have $\epsilon_\theta(\langle z \rangle) =  t^0_\Gamma(V_\theta)t^1_\Gamma(z, V_\theta)$ whenever $z \in \Gamma$ and the fixed point spaces $V_\theta^z$ and~$V_\theta^\Gamma$ are equal.
(We have written $\langle x \rangle$ for the group generated by~$x$, and we will sometimes write~$\epsilon_\theta(x) = \epsilon_\theta(\langle x \rangle )$. We will also follow \cite[(4.6.3)]{BHLLIII} in writing $\epsilon^i_\Gamma$ for~$t^i_\Gamma$.)

\begin{pp}\label{symplecticcomputation}
Let $x \in \mu_K$ and let~$L = F[x]$. Then we have the equality
\[
\epsilon_\theta(x) = \epsilon^0_{\mu_K}(V_{\theta_L})\epsilon^0_{\mu_K}(V_\theta)\epsilon^1_{\mu_K}(x, V_\theta).
\]
\end{pp}
\begin{proof}
We have an orthogonal decomposition
\[
V_\theta = V_\theta^{\mu_L} \perp \left ( V_\theta^{\mu_L} \right )^\perp
\]
as symplectic $\bF_p[\mu_K]$-modules. Indeed, the restriction of the symplectic form to~$V_\theta^{\mu_L}$ is nondegenerate because the order of~$\mu_L$ is prime to~$p$.

Since $V_\theta^{\mu_L} = V_\theta^x$, we have $\epsilon_\theta(x) = \epsilon^0_{\mu_L}(V_\theta)\epsilon^1_{\mu_L}(x, V_\theta)$.
Since the invariants~$t^0$ and~$t^1$ are multiplicative in orthogonal sums, we deduce that
\[
\epsilon_\theta(x) = 
\epsilon^0_{\mu_L}(V_\theta^{\mu_L}) \epsilon^1_{\mu_L}(x, V_\theta^{\mu_L})\epsilon^0_{\mu_L}(V_\theta^{\mu_L, \perp})\epsilon^1_{\mu_L}(x, V_\theta^{\mu_L, \perp}).
\]
Since~$\mu_L$ acts trivially on~$V_\theta^{\mu_L}$, we deduce that
\begin{equation}\label{secondsymplecticdecomposition}
\epsilon_\theta(x) = \epsilon^0_{\mu_L}(V_\theta^{\mu_L, \perp})\epsilon^1_{\mu_L}(x, V_\theta^{\mu_L, \perp}).
\end{equation}

On the orthogonal complement~$V_\theta^{\mu_L, \perp}$, the fixed subspaces of the groups~$\langle x \rangle, \mu_L, \mu_K$ are all trivial. Hence \cite[Proposition~5]{BHLLIII} implies that
\begin{gather*}
\epsilon_{\langle x \rangle}(V_\theta^{\mu_L, \perp}) = \epsilon^0_{\mu_L}(V_\theta^{\mu_L, \perp})\epsilon^1_{\mu_L}(x, V_\theta^{\mu_L, \perp})\\
\epsilon_{\langle x \rangle}(V_\theta^{\mu_L, \perp}) = \epsilon^0_{\mu_K}(V_\theta^{\mu_L, \perp})\epsilon^1_{\mu_K}(x, V_\theta^{\mu_L, \perp})
\end{gather*}
and so
\[
\epsilon^0_{\mu_L}(V_\theta^{\mu_L, \perp})\epsilon^1_{\mu_L}(x, V_\theta^{\mu_L, \perp}) = \epsilon^0_{\mu_K}(V_\theta^{\mu_L, \perp})\epsilon^1_{\mu_K}(x, V_\theta^{\mu_L, \perp}).
\]
Multiply both sides of this equation by~$\epsilon^0_{\mu_K}(V_{\theta_L})\epsilon^1_{\mu_K}(x, V_{\theta_L})$, and recall that~$V_{\theta_L} = V_\theta^{\mu_L}$. By~(\ref{secondsymplecticdecomposition}), the resulting equality is
\[
\epsilon^0_{\mu_K}(V_{\theta_L})\epsilon^1_{\mu_K}(x, V_{\theta_L}) \epsilon_\theta(x) = \epsilon^0_{\mu_K}(V_\theta)\epsilon^1_{\mu_K}(x, V_\theta).
\]
There remains to notice that, by construction, the character $\epsilon^1_{\mu_K}(-, V_{\theta_L})$ is the inflation to~$\mu_K$ of $\epsilon^1_{\mu_K/\mu_L}(-, V^{\mu_L})$, hence takes value~$1$ at~$x \in \mu_L$. 
\end{proof}

\section{Invariants of simple inertial classes.}\label{sec:invariants}
This section constructs the parametrization of simple inertial classes we shall use.
It consists of two invariants, the first of which is just the endo-class of simple characters contained in the inertial class.
To recall what this is, we begin with the case that~$\fs$ is a supercuspidal inertial class of $R[\GL_m(D)]$-representations.
By theorem~\ref{types} it contains a unique conjugacy class of maximal simple types, which by proposition~\ref{conjugateembeddings} determines a unique endo-class of simple characters over~$F$.
We write~$\cl(\fs)$ for this endo-class.
In the case that~$\fs$ has inertial supercuspidal support $[\GL_{m/r}(D), \pi_0^{\otimes r}]$ for a supercuspidal $R[\GL_{m/r}(D)]$-representation~$\pi_0$, we will also say that~$\fs$ is inertially equivalent to~$\fs_0^{\otimes r}$, where~$\fs_0$ is the inertial class of~$\pi_0$.
We then define~$\cl(\fs) = \cl(\fs_0)$.
We get therefore a map
\[
\cl: (\text{simple inertial classes of $R[\GL_m(D)]$-representations}) \to (\text{endo-classes of degree dividing~$n = md$})
\]
which is the first invariant we need.

To main point of this section will be to define the second invariant. 
We begin with a maximal simple character~$\theta$ of endo-class~$\Theta_F$ in $G = \GL_m(D)$. The case of level zero representations can be regarded as corresponding to maximal simple characters with trivial endo-class, which (by definition) are the trivial characters of the pro-unipotent radicals~$U^1(\fA)$ of maximal compact subgroups of~$G$. Choose a simple stratum $[\fA, \beta]$ defining~$\theta$, and let $\fB = \fA \cap B$. Since~$\theta$ is maximal, $\fB$ is a maximal order in the central simple algebra~$B$ over the field~$F[\beta]$.

Fix an $F[\beta]$-linear isomorphism
\begin{displaymath}
\Phi: B \to M_{m'}(D')
\end{displaymath}
where~$D'$ is a central division algebra of reduced degree~$d'$ over~$F[\beta]$, such that the order~$\fB$ gets mapped to~$M_{m'}(\mO_{D'})$. The inverse of the isomorphism $U(\fB)/U^1(\fB) \to J_\theta/J^1_\theta$ induced by the inclusion, together with~$\Phi$, induces an isomorphism
\begin{displaymath}
\Phi: J_\theta/J^1_\theta \to U(\fB)/U^1(\fB) \to \GL_{m'}(\bd').
\end{displaymath}

For supercuspidal classes, our second invariant should be a finite set of representations of~$\GL_{m'}(\bd')$ defined by pushing forward through the isomorphism~$\Phi$ the level zero part of a maximal simple type.
However, there is no canonical choice of~$\Phi$, and since we will need to work with many different groups at once, we need a systematic way of keeping track of choices.
To deal with this, we introduce an analogue of the notion of \emph{tame parameter field} that appears in~\cite[2.6]{BHeffective}. 
Our analogue only accounts for the unramified part of parameter fields but works uniformly across inner forms of~$\GL_n(F)$, and suffices to treat inertial classes of representations.

\subsection{Lifts to the parameter field.}\label{liftsandrigidifications}
Let $A = M_m(D)$, a central simple algebra over~$F$ with~$m \deg(D) = n$, and let~$G = A^\times = \GL_m(D)$.
Assume~$\theta$ is a maximal simple character in~$G$ and write $\Theta_F = \cl(\theta)$.

\begin{defn}
A \emph{parameter field} for~$\theta$ in~$G$ is a subfield of~$A$ of the form~$F[\beta]$ for a simple stratum $[\fA, \beta]$ defining~$\theta$. An \emph{unramified parameter field} is a subfield of~$A$ of the form $\Fbetaur$ for a parameter field~$F[\beta]$ (the maximal unramified extension of~$F$ in~$F[\beta]$).
\end{defn}

\begin{pp}\label{urparameterfields}
Let~$\theta$ be a maximal simple character in~$G$ and let $T_1, T_2$ be unramified parameter fields for~$\theta$. Then
\begin{enumerate}
\item there exists $j \in J^1_\theta$ conjugating~$T_1$ to~$T_2$
\item if~$j \in J^1_\theta$ normalizes an unramified parameter field~$T$ for~$\theta$, then it centralizes it.
\end{enumerate}
It follows that there exists exactly one isomorphism $T_1 \to T_2$ which can be realized by conjugation by elements of~$J^1_\theta$.
\end{pp}
\begin{proof}
This is very similar to~\cite[2.6 Proposition]{BHeffective}. 
Let $[\fA, \beta_i]$ be strata defining~$\theta$ with $T_i = F[\beta_i]^{\mathrm{ur}}$. For the first part, given a generator~$\zeta_1$ of~$\mu_{T_1}$, there exists some generator $\zeta_2 \in \mu_{T_2}$ and some $j_1 \in J^1_\theta$ such that $\zeta_2 = \zeta_1 j_1$. This is because the inclusion yields isomorphisms $U(\fB_{\beta_i})/U^1(\fB_{\beta_i}) \to J_\theta/J^1_\theta$ embedding~$\mu_{T_i}$ in the centre of~$J_\theta/J^1_\theta$. The centre is given by the image of $\mO_{D'_i}^\times$, hence might be larger than the image of~$\mu_{T_i}$, but it will still be a cyclic group. Since the~$\mu_{T_i}$ have the same order, as the~$T_i$ have the same degree $f(\Theta_F)$ over~$F$, they will have the same image under these maps.

By~\cite[2.6 Conjugacy Lemma]{BHeffective} we know that $\zeta_2 = \zeta_1 j_1$ is $J^1_\theta$-conjugate to some~$\zeta_3 = \zeta_1j_2$, where $j_2 \in J^1_\theta$ commutes with~$\zeta_1$. But then~$j_2 = 1$ as its order has to be both a power of~$p$ (as $j_2 \in J^1_\theta$) and prime to~$p$ (as $j_2 = \zeta_1^{-1}\zeta_3$ and the factors at the right hand side commute). So the generator~$\zeta_2$ of~$\mu_{T_2}$ is $J^1_\theta$-conjugate to the generator~$\zeta_1$ of~$\mu_{T_1}$, and the claim follows.

The second part holds as~$\mu_T$ generates~$T$ over~$F$ and embeds in~$J_\theta/J^1_\theta$, on which the conjugation action of~$J^1_\theta$ is trivial.
\end{proof}

The degree of an unramified parameter field of~$\theta$ over~$F$ equals~$f(\Theta_F)$, which is independent of the choice of~$[\fA, \beta]$ defining~$\theta$, and even of the choice of a representative~$\theta$ of~$\Theta_F$. 
Let $E = F_{f(\Theta_F)}$, the unramified extension of~$F$ in~$\overline{F}$ of degree~$f(\Theta_F)$. 
By proposition~\ref{urparameterfields}, between any two unramified parameter fields~$T_i$ for~$\theta$ there is a distinguished isomorphism $\iota_{T_1, T_2}: T_1 \to T_2$. 
Choose $F$-linear isomorphisms
\begin{displaymath}
\iota_T : E \to T
\end{displaymath}
for any unramified parameter field~$T$ for~$\theta$, such that $\iota_{T_1, T_2}\iota_{T_1} = \iota_{T_2}$ throughout. 
Denote the system of the~$\iota_T$ by~$\iota$. 

Now fix a parameter field~$F[\beta]$ for~$\theta$, and an $F[\beta]$-linear isomorphism $\Phi: B \to M_{m'}(D')$ for some central division algebra~$D'$ over~$F[\beta]$, sending~$\fB = B \cap \fA$ to~$M_{m'}(\mO_{D'})$. 
The choice of~$\iota$ yields a distinguished embedding $\be \to \bd'$, and the extension~$\bd' / \be$ then has degree~$d' = d/(d, \delta(\Theta_F))$, where $\delta(\Theta_F) = [F[\beta]:F]$, so we get a well-defined $\Gal(\be_{d'}/\be)$-orbit of $\be$-linear isomorphisms~$\bd' \isom \be_{d'}$ and $M_{m'}(\bd') \isom M_{m'}(\be_{d'})$. 
In all, the choice of~$\iota$ specifies, for every maximal simple stratum $[\fA, \beta]$ defining~$\theta$ and every $F[\beta]$-linear isomorphism $\Phi: B \to M_{m'}(D')$, an isomorphism
\begin{displaymath}
\Psi: J_\theta/J^1_\theta \to \GL_{m'}(\be_{d'}),
\end{displaymath} 
well-defined up to the action of~$\Gal(\be_{d'}/\be)$ on matrix entries.

\begin{pp}\label{compatibleconjugacy}
The conjugacy class~$\Psi(\iota)$ of this isomorphism under the natural action of $\Gal(\be_{d'}/ \be) \ltimes~\GL_{m'}(\be_{d'})$ on~$\GL_{m'}(\be_{d'})$, by inner automorphisms and Galois action on matrix entries, is independent of the choice of~$[\fA, \beta]$ and~$\Phi$, and only depends on~$\theta$ and~$\iota$.
\end{pp}

\begin{proof}
Take two maximal simple strata $[\fA, \beta_i]$ defining~$\theta$, and fix $F[\beta_i]$-linear isomorphisms $\Phi_i: B_i \to M_{m'}(D'_i)$ to central division algebras~$D'_i$ over~$F[\beta_i]$. We obtain isomorphisms
\begin{equation}\label{compatibleconjugacyequation}
J_\theta/J^1_\theta \to U(\fB_i) / U^1(\fB_i) \to \GL_{m'}(\bd'_i) \to \GL_{m'}(\be_{d'})
\end{equation}
well-defined up to Galois action on coefficients, where the first map is the inverse of the natural inclusion, the second is induced by~$\Phi_i$, and the third by an arbitrary choice of an isomorphism $\bd_i' \to \be_{d'}$ that is~$\be$-linear for the embedding~$\be \to \bd'_i$ induced by $\iota_{F[\beta_i]^\mathrm{ur}}: E \to F[\beta_i]^\mathrm{ur}$. The integers~$m_i'$ and~$d'_i$ coincide as they only depend on the endo-class of~$\theta$.

Observe that~\ref{compatibleconjugacyequation} arises from an analogous sequence
\begin{displaymath}
\fj(\beta_i, \fA) / \fj^1(\beta_i, \fA) \to \fB_i/\fP_1(\fB_i) \to M_{m'}(\bd_i') \to M_{m'}(\be_{d'})
\end{displaymath}
of $\be$-linear ring isomorphisms between $\be$-algebras, on passing to the groups of units. The equality $\fj^1(\beta_1, \fA) = \fj^1(\beta_2, \fA)$ holds since $\fj^1(\beta_i, \fA) = J^1(\beta_i, \fA) -1$. The orders $\fj(\beta_i, \fA)$ have the same group of units, since $\fj(\beta_i, \fA)^\times = J(\beta, \fA)$. The quotient $\fj(\beta_i, \fA)/\fj^1(\beta_i, \fA)$ is additively generated by its group of units (as for all matrix algebras over fields), hence $\fj(\beta_1, \fA) = \fj(\beta_2, \fA)$.

The $\be$-algebra structure on $\fj(\beta_i, \fA)/\fj^1(\beta_i, \fA)$ comes from the embedding $\iota_{F[\beta_i]^{\mathrm{ur}}}$ for $i = 1, 2$, and by construction these embeddings are conjugate by the action of~$J^1_\theta$. So these two $\be$-algebra structures coincide. 
Now the proposition follows as we have two $\be$-linear ring isomorphisms $\fj(\beta_i, \fA) / \fj^1(\beta_i, \fA) \to M_{m'}(\be_{d'})$, which therefore differ by the action of $\Gal(\be_{d'}/ \be) \ltimes \GL_{m'}(\be_{d'})$ by the Skolem--Noether theorem.
\end{proof}

We next show how the choice of a lift~$\Theta_E \to \Theta_F$ of~$\Theta_F$ to~$E$ defines such a system~$\iota$ of isomorphisms. 
Let~$[\fA, \beta_i]$ for $i = 1, 2$ be a simple stratum in~$A$ defining~$\theta$, and consider the unramified parameter field $T_i = F[\beta_i]^{\mathrm{ur}}$ of~$F$. Proposition~\ref{interiorlifting} applies, as~$\beta_i$ commutes with~$T_i$ and $T_i[\beta_i] = F[\beta_i]$ is a field with $F[\beta_i]^\times \subseteq \fK(\fA)$, and we get an interior lift~$\theta_{T_i}$. Fix compatible isomorphisms $\iota_{T_i}: E \to T_i$ as in section~\ref{liftsandrigidifications}. We get endo-classes 
\begin{displaymath}
\Theta_E^i = \iota_{T_i}^*\cl(\theta_{T_i}).
\end{displaymath}

\begin{pp}\label{sameendo-classes}
The endo-classes~$\Theta_E^1$ and~$\Theta_E^2$ are equal.
\end{pp}
\begin{proof}
Because the~$\iota_{T_i}$ are compatible, we have $\iota_{T_2} = \iota_{T_1, T_2}\iota_{T_1}$, for $\iota_{T_1, T_2}: T_1 \to T_2$ the only isomorphism induced by conjugation by elements of~$J^1_\theta$ (see proposition~\ref{urparameterfields}).
The relation 
\begin{displaymath}
\Theta_E^2 = \iota_{T_2}^*\cl(\theta_{T_2}) = \iota_{T_2}^*\iota_{T_1, T_2}^*\cl(\theta_{T_2})
\end{displaymath}
holds. Assume $\iota_{T_1, T_2}$ is induced by conjugation by~$j \in J^1_\theta$. Then 
\begin{displaymath}
\iota_{T_1, T_2}^*\cl(\theta_{T_2}) = \cl(\ad(j)^*\theta_{T_2}).
\end{displaymath}
However, $\ad(j)^*\theta_{T_2}$ is the $T_1$-lift of~$\ad(j)^*\theta = \theta$, hence $\ad(j)^*\theta_{T_2} = \theta_{T_1}$, and the claim follows.
\end{proof}

\begin{pp}\label{urparameterlift}
The group $\Gal(E/F)$ is simply transitive on the set $\Res_{E/F}^{-1}(\Theta_F)$ of $E$-lifts of~$\Theta_F$.
\end{pp}
\begin{proof}
The action has been defined at the end of section~\ref{maximalsimpletypesI}. By~\cite[1.5.1]{BHliftingIV}, $\Gal(E/F)$ is transitive on~$\Res_{E/F}^{-1}(\Theta_F)$, which is in bijection with the set of simple components of $E \otimes_F F[\beta]$ for any parameter field $F[\beta]$ for~$\theta$. But~$E$ is $F$-isomorphic to the maximal unramified extension of~$F$ in~$F[\beta]$, hence
\begin{displaymath}
E \otimes_F F[\beta] \cong \prod_{\sigma: E \to F[\beta]}F[\beta]
\end{displaymath}
and so the fiber $\Res_{E/F}^{-1}(\Theta_F)$ has as many elements as $\Gal(E/F)$.
\end{proof}

It follows that for \emph{any} unramified parameter field~$T$ for~$\theta$ we can define $\iota_T: E \to T$ to be the only $F$-linear isomorphism such that $\iota_T^*\cl(\theta_T) = \Theta_E$; by proposition~\ref{urparameterlift}, $\iota_T$ is well-defined, and by proposition~\ref{sameendo-classes} this defines a compatible system of isomorphisms. So a choice $\Theta_E \to \Theta_F$ of lift gives rise to a conjugacy class of isomorphisms
\begin{displaymath}
\Psi(\Theta_E): J_\theta/J^1_\theta \to \GL_{m'}(\be_{d'})
\end{displaymath}
for any maximal simple character~$\theta$ in~$G$ with endo-class~$\Theta_F$, by setting $\Psi(\Theta_E) = \Psi(\iota)$ for the~$\iota$ just constructed.

\subsection{Level zero maps for supercuspidal inertial classes.}\label{levelzeromaps}
Using the construction in the previous section, we attach to each supercuspidal inertial class~$\fs$ of~$G$ a conjugacy class of characters which we call the level zero part of~$\fs$.
We begin by recalling the classification of cuspidal representations of finite general linear groups.
Over~$\cbQ_\ell$ we have a bijection
\begin{displaymath}
\sigma: (\text{orbits of~$\Gal(\be_n / \be)$ on $\be$-regular characters of~$\be_n^\times$}) \to (\text{supercuspidal irreducible representations of~$\GL_n(\be)$})
\end{displaymath}
characterized by a character identity on maximal elliptic tori (see~\cite{Greencharacters} or~\cite[Section~2]{BHLLIII}). We recall that if $\be_n^\times$ is embedded in~$\GL_n(\be)$ via the left multiplication action on~$\be_n$, and $x \in \be_n^\times$ is a primitive element for the extension $\be_n / \be$, then
\begin{displaymath}
\tr \sigma[\chi](x) = (-1)^{n-1}\sum_{i=0}^{n-1}\chi(\Frob_\be^i x)
\end{displaymath}
for~$\Frob_\be$ the Frobenius element of~$\Gal(\be_n/\be)$.

Any character $\chi : \be_n^\times \to \cbQ_\ell^\times$ decomposes uniquely as a product of an $\ell$-singular part~$\chi_{(\ell)}$ and an $\ell$-regular part~$\chi^{(\ell)}$, whose orbits under~$\Gal(\be_n / \be)$ only depend on the orbit of~$\chi$. We use the mod~$\ell$ reduction map to identify the prime-to-$\ell$ roots of unity in~$\cbQ_\ell$ and~$\cbF_\ell$. Then the reduction mod~$\ell$ of~$\chi$ identifies with~$\chi^{(\ell)}$.

The reduction~$\br_\ell(\sigma[\chi])$ is irreducible and cuspidal, and only depends on~$[\chi^{(\ell)}]$ (see \cite[III.2.3 Th\'eor\`eme]{Vignerasrepsbook}). 
We denote it by~$\sigma_\ell[\chi^{(\ell)}]$. 
The map $[\chi] \mapsto \sigma_\ell[\chi^\pexp{\ell}]$ defines a bijection, from the orbits of~$\Gal(\be_n/\be)$ on the characters of~$(\be_n^\times)^{(\ell)}$ which have an $\be$-regular extension to~$\be_n^\times$, to the set of cuspidal irreducible representations of~$\GL_n(\be)$ over~$\cbF_\ell$. The representation~$\sigma_\ell[\chi^{(\ell)}]$ is supercuspidal if and only if~$[\chi^{(\ell)}]$ is itself $\be$-regular. Finally, if $\chi^{(\ell)}$ is norm-inflated from an $\be$-regular $\cbF_\ell$-character~$\chi^{(\ell), \reg}$ of~$\be_{n/a}^\times$ for some positive divisor~$a$ of~$n$, then the supercuspidal support of $\br_\ell(\sigma[\chi])$ is $\sigma_\ell[\chi^{(\ell), \reg}]^{\otimes a}$ (see~\cite[III.2.8]{Vignerasrepsbook} and \cite{MStypes} th\'eor\`eme~2.36).

The following proposition is a consequence of the classification above.

\begin{pp}\label{fixingcharacter}
Let~$R$ be an algebraically closed field of characteristic $\ell \not = p$, let $\be = \bF_q$ for~$q$ a power of~$p$, and let~$\psi$ be an $R$-character of~$\be^\times$ such that~$\psi\pi \cong \pi$ for all cuspidal irreducible $R[\GL_n(\be)]$-representations. Then~$\psi = 1$.
\end{pp}
\begin{proof}
Assume first that~$R = \cbQ_\ell$. 
By~\cite[Corollary~1.27]{DLreps}, we know that $\psi \otimes \sigma[\chi] \cong \sigma[\psi \chi]$.
Then the claim follows because if $\psi \not = 1$ there always exists an $\be$-regular character~$\chi$ of~$\be_n^\times$ with no $\Gal(\be_n/\be)$-conjugate of the form $\psi\chi$. Indeed, if $\chi^{q^i} = \psi\chi$ then $\chi^{(q-1)(q^i-1)} = 1$, and taking~$\chi$ to be a generator of the character group yields a contradiction if $0 < i \leq n-1$. Then the claim holds for any~$R$ of characteristic zero.

Now assume $R = \cbF_\ell$. 
Lift~$\psi$ to a character $\psi^+: (\be^\times)^{(\ell)} \to \cbQ_\ell^\times$.
Since $\sigma[\chi] \otimes \psi^+ \cong \sigma[\psi^+ \chi]$ for all $\be$-regular characters~$\chi$, and $\br_\ell\sigma[\chi] = \sigma_\ell[\chi^\pexp{\ell}]$, we deduce from our assumption on~$\psi$ that for all $\be$-regular $\chi: \be_n^\times \to \cbQ_\ell^\times$ we have $[\chi^{(\ell)}] = [\chi^{(\ell)}\psi^+]$. 
By duality, we find that there exists an element $x \in (\be^\times)^{(\ell)}$ such that whenever $z \in \be_n^\times$ is $\be$-regular we have $[z^{(\ell)}x] = [z^{(\ell)}]$.

Assume that $\be_n^\times$ contains an $\ell$-singular element---that is, some $\zeta \in \mu_{\ell^\infty}(\be_n)$---which is $\be$-regular. Then since $\zeta^{(\ell)} = 1$ our assumption on~$x$ implies $[x] = [1]$, hence $x = 1$.

Otherwise, since~$(\be_n^\times)_\pexp{\ell}$ is cyclic there exists a proper divisor $a | n$ such that $(\be_n^\times)_{(\ell)} = (\be_a^\times)_{(\ell)}$. 
Let~$\tau$ be a generator of $(\be_n^\times)^{(\ell)}$: then~$\tau$ is the $\ell$-regular part of some $\be$-regular element of~$\be_n^\times$, which can be chosen to be a generator of~$\be_n^\times$. 
If $x \ne 1$ then there exists a proper divisor~$b$ of~$n$ such that $(\Frob_q)^b \tau = \xi \tau$ for some $\xi \in \be^\times$, because the set of $g \in \Gal(\be_n/\be)$ with $(g\tau)\tau^{-1} \in \be^\times$ is a subgroup.

Let~$w$ be the order of $\xi \in \be^\times$, which is a divisor of~$|\be^\times| = q-1$. Then $(\Frob_q)^b(\tau^w) = (\xi \tau)^w = \tau^w$, hence $(\Frob_q)^b$ fixes the subgroup $w\cdot (\be_n^{\times})^{(\ell)}$, which has index at most~$w^{(\ell)}$ in~$(\be_n^{\times})^{(\ell)}$. 
Since $\be_n^\times \cong (\be_n^\times)^{(\ell)}\times  (\be_n^\times)_{(\ell)}$, we find a bound
\begin{displaymath}
q^n - 1 \leq w^{(\ell)}|(\be_b^\times)^{(\ell)}||(\be_a^\times)_{(\ell)}|.
\end{displaymath}
Since $w | q - 1$, we have that $w^{(\ell)}|(\be_a^\times)_{(\ell)}|$ divides $|\be_a^\times|$. Then the bound yields $q^n - 1 \leq  (q^a-1)(q^b-1)$ for certain proper divisors $a, b |n$. This is impossible even if both~$a$ and~$b$ coincide with the largest proper divisor~$d$ of~$n$, because
\begin{displaymath}
\frac{q^n - 1}{q^d - 1} = 1 + q^d + \cdots + q^{d(\frac{n}{d} -1)} > q^d -1.
\end{displaymath}

The claim then holds over~$\cbF_\ell$, and follows over an arbitrary algebraically closed field~$R$ of characteristic~$\ell$ because an irreducible $\cbF_\ell$-representation of~$\GL_n(\be)$ is absolutely irreducible, and the number of irreducible representations over~$\cbF_\ell$ and~$R$ is the same (it is the number of $\ell$-regular conjugacy classes in~$\GL_n(\be)$).
\end{proof}

\paragraph{}
Next, we need the definition and basic properties of the $\bK$-functor associated to a $\beta$-extension~$\kappa$ of a maximal simple character~$\theta$ in~$G$, as in~\cite[Section~5]{MStypes}. 
\begin{defn}
Let~$\kappa$ be a $\beta$-extension of a maximal simple character~$\theta$ in~$G$.
Define a functor from smooth representations of~$R[G]$ to representations of~$R[J_\theta/J^1_\theta]$, by
\[
\bK_\kappa: \pi \mapsto \Hom_{J^1_\theta}(\kappa|_{J^1_\theta}, \pi|_{J^1_\theta}),
\]
with the action $x: f \mapsto x f x^{-1}$ for $x \in J_\theta$.
\end{defn}

The functor~$\bK_\kappa$ is exact, and its behaviour on cuspidal irreducible representations of~$R[G]$ is recorded in the following lemma.

\begin{lemma}\cite[Lemme~5.3]{MStypes}\label{Kcuspidal}
Let~$\rho$ be a cuspidal irreducible representation of~$R[G]$. Then
\begin{enumerate}
\item if~$\rho$ does not contain~$\theta$, then~$\bK_\kappa(\rho) = 0$.
\item if~$\rho$ contains the maximal simple type $(J_\theta, \kappa \otimes \sigma)$ then
\begin{displaymath}
\bK_\kappa(\rho) \cong \sigma \oplus \sigma^\phi \oplus \cdots \oplus \sigma^{\phi^{b(\rho)-1}},
\end{displaymath} 
where~$\phi$ is a generator of~$\Gal(\be_{d'} / \be)$ and $b(\rho)$ is the size of the orbit of~$\sigma$ under the action of~$\Gal(\be_{d'} / \be)$.
\end{enumerate}
\end{lemma}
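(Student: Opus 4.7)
I would begin by observing that $\bK_\kappa$ naturally lands in the category of $J_\theta/J^1_\theta$-representations: for $f \in \Hom_{J^1_\theta}(\kappa,\rho)$ and $x \in J_\theta$, the map $(x\cdot f)(v) = \rho(x)\, f(\kappa(x)^{-1}v)$ is again $J^1_\theta$-equivariant because $J^1_\theta$ is normal in $J_\theta$, and $J^1_\theta$ itself acts trivially on $\bK_\kappa(\rho)$ by the $J^1_\theta$-equivariance of $f$. For part~(1), the key point is that $\kappa|_{H^1_\theta}$ is a multiple of $\theta$, since $\kappa$ extends the Heisenberg representation $\eta(\theta)$ and $\eta(\theta)|_{H^1_\theta}$ is $\theta$-isotypic. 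Any nonzero $J^1_\theta$-map $\kappa \to \rho$ restricts to a nonzero $H^1_\theta$-map from this $\theta$-isotypic space into $\rho|_{H^1_\theta}$, which would force $\rho$ to contain $\theta$; contrapositive gives~(1).

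For part~(2), the plan is to use theorem~\ref{types}(2) to write $\rho = \ind_{\bJ(\lambda)}^G \Lambda$ for the unique extension $\Lambda$ of $\lambda = \kappa \otimes \sigma$ that recovers $\rho$, and then decompose $\rho|_{J_\theta}$ by Mackey:
\[
\rho|_{J_\theta} \;=\; \bigoplus_{g \,\in\, J_\theta \backslash G / \bJ(\lambda)} \ind_{J_\theta \cap \bJ(\lambda)^g}^{J_\theta} g^*\Lambda.
\]
Applying $\Hom_{J^1_\theta}(\kappa,-)$ and Frobenius reciprocity, a summand can contribute only if $g$ intertwines $\eta$ in $G$; the intertwining of $\eta$ coincides with that of $\theta$, namely $J_\theta B^\times J_\theta$, where $B = Z_A(F[\beta])$. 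Since $\bJ(\lambda) \subseteq \bJ(\theta) = \fK(\fB)J_\theta \subseteq B^\times J_\theta$, the contributing double cosets reduce to $\bJ(\theta)/\bJ(\lambda)$, which by the discussion preceding the lemma is cyclic of order $b(\sigma) = b(\rho)$, generated by the class of a uniformizer $\pi_{D'}$ of $D'$ under a fixed isomorphism $\Phi: B \to M_{m'}(D')$.

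The final step matches the coset represented by $\pi_{D'}^i$, for $i = 0, \ldots, b(\rho)-1$, with the summand $\sigma^{\phi^i}$. Conjugation by $\pi_{D'}$ normalizes $\eta$ (as $\pi_{D'}\in\bJ(\theta)$ normalizes $\theta$) and preserves the $\beta$-extension property, so it preserves $\kappa$ up to isomorphism; under the identification $J_\theta/J^1_\theta \to U(\fB)/U^1(\fB) \to \GL_{m'}(\bd')$, the same conjugation induces the Frobenius $\phi$ on matrix entries, as observed in the proof of proposition~\ref{p-primarywideextension}. Thus $(\pi_{D'}^i)^*\lambda \cong \kappa \otimes \sigma^{\phi^i}$, and reassembling the Mackey contributions yields the claimed decomposition.

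The step I expect to be most delicate is the verification that $\kappa$ itself, not merely $\eta$, is fixed up to isomorphism under conjugation by $\pi_{D'}$: a priori, conjugating a $\beta$-extension could produce a nontrivial twist by a character of $\be^\times$ inflated to $J_\theta$. This must be dealt with either by invoking the rigidity of $\beta$-extensions with respect to conjugation by elements of $\fK(\fB)$ normalizing the relevant data, or by absorbing the possible twist into the parameter $\sigma$ and checking that the Galois-orbit structure is unchanged; once this is settled, the final decomposition follows by collecting summands.
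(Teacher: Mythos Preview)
The paper does not supply its own proof of this lemma; it is quoted from \cite{MStypes} and used as input, so there is no in-paper argument to compare against. Your treatment of part~(1) is correct, and the Mackey approach to part~(2) is the standard one. However, there is a genuine gap at the sentence ``the contributing double cosets reduce to $\bJ(\theta)/\bJ(\lambda)$''. The intertwining set of~$\eta$ (equivalently of~$\kappa$) is $J_\theta B^\times J_\theta$, and for $m' > 1$ the double coset space $J_\theta \backslash J_\theta B^\times J_\theta / \bJ(\lambda)$ is infinite, by the Cartan decomposition of $B^\times \cong \GL_{m'}(D')$. Your inclusion $\bJ(\lambda) \subseteq \bJ(\theta) \subseteq B^\times J_\theta$ only exhibits $\bJ(\theta)/\bJ(\lambda)$ as \emph{some} of these cosets; it does not show that the others contribute nothing. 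What rules out a representative $g \in B^\times \setminus \fK(\fB)$ is the cuspidality of~$\sigma$: for such~$g$ the image of $J_\theta \cap gJ_\theta g^{-1}$ in $J_\theta/J^1_\theta \cong \GL_{m'}(\bd')$ lies in a proper parabolic subgroup, and the corresponding Mackey summand, after applying~$\bK_\kappa$, is induced from that parabolic and hence has no cuspidal constituent. You have not invoked cuspidality anywhere, and without it the argument cannot close---already in the split level-zero case $\theta = 1$, $J_\theta = \GL_n(\fo_F)$, $B^\times = \GL_n(F)$, one sees infinitely many intertwining cosets that must be killed this way.

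By contrast, the step you flag as most delicate---that $(\pi_{D'})^*\kappa \cong \kappa$ rather than a twist---is immediate. Since $\pi_{D'} \in \bJ(\theta)$ normalizes $J_\theta$ and lies in the $G$-intertwining of~$\kappa$ (which equals that of~$\theta$), irreducibility of~$\kappa$ turns the nonzero space $\Hom_{J_\theta}((\pi_{D'})^*\kappa, \kappa)$ into an isomorphism, with no room for a character twist. The paper records exactly this observation in the paragraph preceding theorem~\ref{parametrization}.
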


If we fix a lift~$\Theta_E \to \Theta_F$ we obtain a conjugacy class~$\Psi(\Theta_E)$ of isomorphisms $J_\theta/J^1_\theta \isom \GL_{m'}(\be_{d'})$. 
By pushforward of~$\bK_{\kappa}(\rho)$ through this conjugacy class, we can attach to every isomorphism class of~$R[G]$-representations an isomorphism class of~$R[\GL_{m'}(\be_{d'})]$-representations.
We will still denote this class by~$\bK_{\kappa}(\rho)$.
It follows from lemma~\ref{Kcuspidal} that if~$\rho$ is a cuspidal irreducible~$R[G]$-representation then~$\bK_{\kappa}(\rho)$ is the sum of an orbit of cuspidal representations of~$R[\GL_{m'}(\be_{d'})]$ under~$\Gal(\be_{d'}/\be)$.
By our discussion of representations of finite general linear groups, $\bK_{\kappa}(\rho)$ then corresponds to an orbit of~$\Gal(\be_{n/\delta(\Theta_F)} / \be)$ on the set of characters of~$\be_{n/\delta(\Theta_F)}^\times$. 
We introduce the notation $X_R(\Theta_F)$ for the set of $R^\times$-valued characters of~$\be_{n/\delta(\Theta_F)}^\times$, and $\Gamma(\Theta_F)$ for the group $\Gal(\be_{n/\delta(\Theta_F)}/ \be)$. 
We will refer to the map just constructed
\begin{displaymath}
\Lambda_{R}(-, \Theta_E, \kappa): \left ( \text{cuspidal representations of~$R[G]$ with endo-class~$\Theta_F$} \right ) \to \Gamma(\Theta_F) \backslash X_R(\Theta_F) 
\end{displaymath}
as the \emph{level zero map} attached to~$\Theta_E \to \Theta_F$ and~$\kappa$. 

\begin{pp}\label{samelevelzero}
If $\theta_1 = \ad(g)^*\theta_2$ are conjugate maximal simple characters in~$G$, and~$\kappa_1 = \ad(g)^*\kappa_2$ are $\beta$-extensions of the~$\theta_i$, then 
\begin{equation}\label{samelevelzeroII}
\Lambda_{R}(-, \Theta_E, \kappa_1) = \Lambda_{R}(-, \Theta_E, \kappa_2). 
\end{equation}
Conversely, if $\kappa_1, \kappa_2$ are $\beta$-extensions of a maximal simple character~$\theta$, and~(\ref{samelevelzeroII}) holds, then $\kappa_1 = \kappa_2$.
\end{pp}
\begin{proof}
Let~$E_1$ be an unramified parameter field for~$\theta_1$, and let~$\iota_{E_1} : E \to E_1$ be the only $F$-linear isomorphism with $\iota_{E_1}^*\cl(\theta_{1, E_1}) = \Theta_E$. Then $g E_1 g^{-1}$ is an unramified parameter field for~$\theta_2$, and we have an isomorphism $\ad(g) \circ \iota_{E_1} : E \to g E_1 g^{-1}$. Since $\theta_1 = \ad(g)^*\theta_2$, the relation $\theta_{1, E_1} = \ad(g)^*\theta_{2, g E_1 g^{-1}}$ holds on the interior lifts. Hence $(\ad(g) \circ \iota_{E_1})^*\cl \theta_{2, g E_1 g^{-1}} = \Theta_E$ and $\ad(g) \circ \iota_{E_1}$ is the isomorphism specified by~$\Theta_E$. So conjugation by~$g$ preserves the classes~$\Psi(\Theta_E)$ of isomorphisms $J_{\theta_i}/J^1_{\theta_i} \to \GL_{n/\delta(\Theta_F)}(\be)$, and since $\bK_{\kappa_1} = \ad(g)^*\bK_{\kappa_2}$ the first claim follows.

Now assume that the~$\kappa_i$ are $\beta$-extensions of~$\theta$ and~(\ref{samelevelzeroII}) holds. 
We know that the~$\kappa_i$ are twists of each other by a character~$\chi$ of~$\be^\times$. 
Then our assumption implies that~$\chi$ fixes all elements of~$\Gamma(\Theta_F) \backslash X_R(\Theta_F)$ giving rise to cuspidal $R$-representations of $\GL_{n/\delta(\Theta_F)}(\be)$, because these also give rise to cuspidal representations of~$\GL_{m'}(\be_{d'})$ (recall that $m'd' = n/\delta(\Theta_F)$). By proposition~\ref{fixingcharacter}, we have that~$\chi = 1$.
\end{proof}

Because of proposition~\ref{samelevelzero} the level zero map does not depend on the choice of~$\kappa$ within its $G$-conjugacy class.
By~\cite[Lemmas~6.1, 6.8]{MSreps}, we see that~$\Lambda_{R}(\pi, \Theta_E, \kappa)$ is supercuspidal if and only if~$\pi$ is supercuspidal. 
Finally, since an inertial class of supercuspidal representations of~$R[G]$ consists of unramified twists of a single representation, we know that the restriction of~$\Lambda_{R}(-, \Theta_E, \kappa)$ to supercuspidal representations only depends on the inertial class.
To extend the level zero map to simple inertial classes, we need to study the compatibility of $\bK$-functors with parabolic induction, which we do in the next section, following~\cite[Section~5.3]{MStypes}.

\subsection{Compatible $\beta$-extensions.}
We will develop this topic in slightly more generality than is immediately needed.
Let~$(m_1, \ldots, m_r)$ be a sequence of positive integers summing to~$m$, defining a standard Levi subgroup~$M$ of~$\GL_m(D)$. 
Let~$\Theta_F$ be an endo-class whose degree divides all the integers~$n_i = m_id$. 
Fix a $\beta$-extension~$\kappa$ of a maximal simple character~$\theta$ in~$\GL_m(D)$ of endo-class~$\Theta_F$.
Fix a lift~$\Theta_E \to \Theta_F$.
As in the proof of proposition~\ref{samelevelzero}, the effect of $\bK_\kappa$ on isomorphism classes of representations only depends on the conjugacy class of~$\kappa$ in~$G$.

\begin{defn}
For~$1 \leq i \leq r$, let~$\kappa_i$ be a conjugacy class of maximal $\beta$-extensions in~$\GL_{m_i}(D)$ of endo-class~$\Theta_F$.
We say that the sequence~$(\kappa_i)$ is compatible with~$\kappa$ if, for all irreducible representations~$\pi_i$ of~$\GL_{m_i}(D)$, there is an isomorphism
\begin{equation}\label{compatibilitycondition}
\bK_\kappa(\pi_1 \times \cdots \times \pi_r) \isom \bK_{\kappa_1}(\pi_1) \times \cdots \times \bK_{\kappa_r}(\pi_r).
\end{equation}
Here~$\prod_{i} \GL_{m_i}(D)$ is block-diagonally embedded, and the parabolic induction at the right-hand side is for the Levi subgroup $\prod_{i} \GL_{m_i'}(\be_{d'})$ of~$\GL_{m'}(\be_{d'})$.
\end{defn}

\begin{rk}
We have~$m_i' d' = m_id/\delta(\Theta_F)$, and~$d' = d/(d, \delta(\Theta_F))$, so that $m_i' = m_i(d, \delta(\Theta_F)) / \delta(\Theta_F)$ and $\sum_i m_i' = m'$.
So the parabolic induction at the right-hand side makes sense.
\end{rk}

\begin{pp}\label{parabolicinduction}
For any given choice of~$\kappa$ and~$(m_i)$, there exists a unique sequence~$(\kappa_i)$ that is compatible with~$\kappa$.
It is already determined by the existence of~(\ref{compatibilitycondition}) in the special case that $\pi_1, \ldots, \pi_r$ are cuspidal.
If~$m_i = m_j$, then $\kappa_i = \kappa_j$.
\end{pp}
\begin{proof}
Let us first prove uniqueness.
Assume we are given sequences~$(\kappa_j)$ and~$(\varkappa_j)$, compatible with~$\kappa$ in the sense that equation~(\ref{compatibilitycondition}) holds whenever the~$\pi_i$ are irreducible cuspidal representations of~$\GL_{m_i}(D)$.
There exist characters $\chi_i: \be^\times \to R^\times$ such that $\kappa_i = \chi_i\varkappa_i$.
It suffices to prove that~$\chi_i = 1$ for all~$i$.
To do so, fix~$i$ and let $[\xi]$ be a $\Gal(\be_{n_i/\delta(\Theta_F)}/\be)$-orbit of $R$-characters of~$\be_{n_i/\delta(\Theta_F)}^\times$, giving rise to a cuspidal representation of~$\GL_{m_i'd'}(\be)$.
By proposition~\ref{fixingcharacter}, it suffices to prove that~$[\xi] = [\chi_i\xi]$. 

Let~$\sigma$ be an irreducible cuspidal representation of~$\GL_{m_i'}(\be_{d'})$ attached to~$[\xi]$.
For arbitrary~$j$ let~$\mu_j$ be an irreducible cuspidal representation of $\GL_{m_j'}(\be_{d'})$, such that~$\mu_{j_1} = \mu_{j_2}$ if~$m_{j_1}' = m_{j_2}'$, and such that~$\mu_i = \sigma$.
Let~$\pi_j$ be an irreducible cuspidal representation of~$\GL_{m_j'}(D')$ containing the maximal simple type~$\kappa_j \otimes \mu_j$.

Let~$X$ be an irreducible quotient of $\mu_1 \times \cdots \times \mu_i \times \cdots \times \mu_r$.
By the compatibility assumption applied to~$\bK_\kappa(\pi_1 \times \cdots \times \pi_r)$, together with lemma~\ref{Kcuspidal}, we deduce that~$X$ is also a quotient of
\[
(\chi_1\mu_1)^{g_1} \times \cdots \times (\chi_i \mu_i)^{g_i} \times \cdots \times (\chi_r \mu_r)^{g_r}
\]
for certain $g_i \in \Gal(\be_{d'} / \be)$.
But then uniqueness of the cuspidal support for finite general linear groups implies that, whenever~$m_i' = m_j'$, we have that $\sigma$ and~$\chi_j \sigma$ are in the same~$\Gal(\be_{d'}/\be)$-orbit. 
Applying this to~$j = i$, it follows that~$[\xi] = [\chi_i\xi]$.

The existence of compatible $\beta$-extensions is established during the construction of covers of maximal simple types, and the rest of proposition~\ref{parabolicinduction} is a consequence of~\cite[Proposition~5.9]{MStypes}, although this reference does not keep track of~$\Theta_E$. 
So we review the construction briefly. 
Fix a decomposition~$D^m = D^{m_1}\oplus\cdots\oplus D^{m_r}$. 
Assume that~$\theta$ is defined by a stratum $[\Lambda_{\max}, \beta]$ such that $F[\beta]$ preserves the decomposition and~$\Lambda_{\max}$ conforms to the decomposition (see~\cite[5.3]{MStypes}). 
We obtain an embedded block-diagonal $ \prod_i A_i \cong \prod_i M_{m_i}(D)$ in~$A = M_m(D)$, for which $F[\beta]$ is diagonally embedded. 
The commutant $B=Z_A(E)$ contains the product~$B_1\times\cdots\times B_r$, where~$B_i = Z_{A_i}(E)$. 
We have furthermore lattice chains~$\Lambda_{\max, i} = \Lambda_{\max} \cap D^{m_i}$, and corresponding strata in the~$A_i$.

Now transfer~$\theta$ to maximal simple characters~$\theta_i$ defined on~$[\Lambda_{\max, i}, \beta_i]$, where~$\beta_i$ is the projection of~$\beta$ to~$B_i$. 
We are going to see that~$\kappa$ determines a $\beta$-extension of each of these~$\theta_i$. 
For this, we need to fix an $F[\beta]$-linear isomorphism $\Phi : B \to M_{m}(F[\beta])$ identifying $U^0(\Lambda_{\max}) \cap B$ with $M_{m}(\mO_{F[\beta]})$, and a simple stratum~$[\Lambda, \beta]$ in~$A$ satisfying conditions~(1) and~(2) in~\cite[Section~5.3]{MStypes}.

Transfer~$\theta$ to a simple character~$\theta_{\Lambda}$ defined on~$[\Lambda, \beta]$ (it will not be maximal) and let~$\kappa_\Lambda$ be the transfer of~$\kappa$ to a $\beta$-extension of~$\theta_\Lambda$. 
Then let~$N$ be the upper-triangular unipotent group defined by $(m_1, \ldots, m_r)$, and take the invariants of~$\kappa_\Lambda$ under $J(\beta, \Lambda) \cap N$: this is a representation of~$J(\beta, \Lambda) \cap M$ which by~\cite[Proposition~6.6]{SecherreStevensVI} decomposes as a tensor product of $\beta$-extensions~$\kappa_i$ of the~$\theta_i$. 
If~$m_i = m_j$ then the same reference shows that $\kappa_i \cong \kappa_j$: this implies the second claim of the proposition once we prove that~$(\kappa_i)$ is compatible with~$\kappa$.

To do so we apply~\cite[Proposition~5.9]{MStypes}, which says that we have an isomorphism
\begin{displaymath}
\bK_{\kappa}(\pi_1 \times \cdots \times \pi_r) \isom \bK_{\kappa_1}(\pi_1) \times \cdots \times \bK_{\kappa_r}(\pi_r)
\end{displaymath}
where both sides are regarded as representations of~$J_\theta/J^1_\theta$, and the parabolic induction refers to $\prod_{i} J_{\theta_i} / J^1_{\theta_i}$, identified with a Levi subgroup of~$J_\theta/J^1_\theta$. 
So there remains to prove that any isomorphism $J_\theta/J^1_{\theta} \to \GL_{m'}(\be_{d'})$ in the class~$\Psi(\Theta_E)$ restricts to an isomorphism $\prod_i J_{\theta_i} / J^1_{\theta_i} \to \prod_i \GL_{m_i'}(\be_{d'})$ which is in the class~$\Psi(\Theta_E)$ on each factor. 

To prove this, observe that the lift~$\Theta_E$ defines an $F$-linear embedding $\iota: E \to F[\beta]$, characterized by the equality $\cl(\iota^*\theta_{\Fbetaur}) = \Theta_E$. 
By construction, the image of~$F[\beta]$ under the projection to~$B_i$ is a parameter field for~$\theta_i$.
We need to prove that the equality $\cl(\iota^*(\theta_{i, \Fbetaur})) = \Theta_E$ holds.
Recall that~$\theta_i$ is the transfer of~$\theta$ to~$\fA_{\max, i}$. We know by assumption that the interior lift~$\theta_{\Fbetaur}$ has endo-class~$\Theta_E$ under~$\iota$, and the content of the lemma is that the same is true for these transfers. This follows from the compatibility between interior lifts and transfer maps, for which see for instance~\cite[Theorem~6.7]{BSSV}.

This concludes the proof of the proposition.
\end{proof}

Whenever we have a sequence $(\kappa_i)$ as above with the same endo-class as~$\kappa$, it makes sense to ask whether they are compatible. 
As remarked above, a necessary condition is that $\kappa_i \cong \kappa_j$ if~$i= j$. 
We will be interested in the case that all the~$m_i$ are equal to~$m/r$ for some positive divisor~$r$ of~$m$, and we can make the following definition.

\begin{defn}
We say that two conjugacy classes of maximal $\beta$-extensions~$\kappa_{0}$ in~$\GL_{m/r}(D)$ and~$\kappa$ in~$\GL_m(D)$, of endo-class~$\Theta_F$, are \emph{compatible} if $\kappa$ is compatible with~$(\kappa_{0}, \ldots, \kappa_{0})$.
\end{defn}

\begin{rk}
Every element of a compatible pair~$(\kappa_{0}, \kappa)$ determines the other uniquely: this follows from proposition~\ref{parabolicinduction} and the fact that if~$(\kappa_{0}, \kappa)$ are compatible then so are~$(\chi\kappa_{0}, \chi\kappa)$ for all characters~$\chi$ of~$\be^\times$, because $\bK_\kappa \cong \chi\bK_{\chi\kappa}$.
\end{rk}

We end this section by recording the following transitivity result with respect to refining the decomposition~$(m_i)$.

\begin{pp}\label{refinement} 
Let~$(m_i)_{i \in I}$ be a sequence summing to~$m$. Assume that for all~$i$ we have a sequence $(m_{ij})_{j \in J_i}$ of positive integers summing to~$m_i$, such that~$\delta(\Theta_F)$ divides every~$n_{ij} = m_{ij}d$. 
Let~$\kappa$ be a $\beta$-extension in~$\GL_m(D)$ of endo-class~$\Theta_F$. Let~$(\kappa_i)_{i \in I}$ be a sequence of $\beta$-extensions compatible with~$\kappa$, and for all~$i$ let $(\kappa_{ij})_{j \in J_i}$ be a sequence compatible with~$\kappa_i$. Then $(\kappa_{ij})$ is compatible with~$\kappa$.
\end{pp}
\begin{proof}
Fix irreducible cuspidal representations~$\rho_{ij}$ of~$\GL_{m_{ij}}(D)$, and form $\bK$-functors with respect to a fixed lift~$\Theta_E$.
We can twist the~$\rho_{ij}$ by unramified characters, and assume that no two of them are linked.
By~\cite[Th\'eor\`eme~7.24]{MSreps}, this implies that the parabolic inductions $\times_{j \in J_i} \rho_{ij}$ are irreducible.
So by proposition~\ref{parabolicinduction} we deduce that there are isomorphisms
\begin{equation}\label{partialcompatibility}
\bK_{\kappa}(\times_{i, j}\rho_{ij})  \isom \times_{i \in I} \bK_{\kappa_i}(\times_{j \in J_i} \rho_{ij}) \isom \times_{ij}\bK_{\kappa_{ij}}(\rho_{ij}).
\end{equation}
Since the~$\bK$-functors only depend on the restriction of a representation to a compact open subgroup, they are invariant under unramified twists. 
Hence~(\ref{partialcompatibility}) is actually true for all cuspidal irreducible representations~$\rho_{ij}$.
Then the claim follows from the uniqueness part of proposition~\ref{parabolicinduction}.
\end{proof}

\begin{rk}\label{Kinertial}
A compatibility of this kind is implicit in~\cite[Remarque~5.17]{MStypes}.
\end{rk}

\subsection{Level zero maps for simple inertial classes.}
We continue to fix an endo-class~$\Theta_F$ and a lift~$\Theta_E \to \Theta_F$.
Let~$\kappa$ be a conjugacy class of $\beta$-extensions of maximal simple characters in~$G$ of endo-class~$\Theta_F$.
\begin{defn} 
Define the level zero map
\begin{displaymath}
\Lambda_{R}(-, \Theta_E, \kappa): \left ( \text{simple inertial classes of~$R[G]$-representations with endo-class~$\Theta_F$} \right ) \to \Gamma(\Theta_F) \backslash X_R(\Theta_F) 
\end{displaymath}
by sending the inertial class of supercuspidal support $[\GL_{m/r}(D), \pi_0^{\otimes r}]$ to the inflation of $\Lambda_{R}(\pi_0, \Theta_E, \kappa_0)$ through the norm $\be_{n/\delta(\Theta_F)}^\times \to \be_{n/r\delta(\Theta_F)}^\times$.
Here~$\kappa_{0}$ is the maximal $\beta$-extension in~$\GL_{m/r}(D)$ compatible with~$\kappa$.
\end{defn}

\begin{pp}
Let~$\pi$ be a cuspidal, non-supercuspidal irreducible representation of~$\GL_m(D)$. 
Then the two definitions we have given for the level zero part of~$\pi$ coincide.
\end{pp}
\begin{proof}
Choose an element of~$\Psi(\Theta_E)$ and then an irreducible factor~$\sigma$ of~$\bK_\kappa(\pi)$, viewed as a cuspidal representation of~$\GL_{m'}(\be_{d'})$. 
Then~$\sigma$ corresponds to an orbit~$[\chi]_{d'}$ of characters $\be_{n/\delta(\Theta_F)}^\times \to R^\times$ under~$\Gal(\be_{n/\delta(\Theta_F)} / \be_{d'})$. Our first definition yields $\Lambda_{R}(\pi, \Theta_E, \kappa) = [\chi]$, the orbit under~$\Gal(\be_{n/\delta(\Theta_F)} / \be)$. 

Since~$\pi$ is not supercuspidal, neither is~$\sigma$, and so $\chi$ is not itself $\be_{d'}$-regular. 
Hence it is norm-inflated from an $\be_{d'}$-regular character~$\chi_s$ of some intermediate $\be_{n/\delta(\Theta_F)s}^\times$. 
The supercuspidal support of~$\sigma$ corresponds to the $\Gal(\be_{n/\delta(\Theta_F)s} / \be_{d'})$-orbit of~$\chi_s$.

On the other hand, the supercuspidal support of~$\pi$ is inertially equivalent to some~$ \left ( \GL_{m/r}(D), \pi_0^{\otimes r} \right )$. 
The second definition of~$\Lambda_{R}(\pi, \Theta_E, \kappa)$ is the inflation of the character orbit $\Lambda_{R}(\pi_0, \Theta_E, \kappa_0)$. 
By construction, $\Lambda_{R}(\pi_0, \Theta_E, \kappa_0)$ is an orbit of~$\Gal(\be_{n/\delta(\Theta_F) r}/\be)$ on the $\be_{d'}$-regular characters of~$\be_{n/\delta(\Theta_F)r}^\times$.
By compatibility of~$\kappa$ and~$\kappa_{0}$, the supercuspidal support of~$\sigma$ consists of representations attached to $\Gal(\be_{n/\delta(\Theta_F) r}/\be_{d'})$-suborbits of $\Lambda_{R}(\pi_0, \Theta_E, \kappa_0)$. 
By uniqueness of supercuspidal support, it follows that $r = s$ and that the two definitions agree.
\end{proof}

Next we prove a result concerning reduction modulo~$\ell$.
It uses the fact that a $\beta$-extension~$\kappa$ of a maximal simple $\cbQ_\ell$-character~$\theta$ is integral and the reduction~$\br_\ell(\kappa)$ is a $\beta$-extension of~$\br_\ell(\theta)$, which is a maximal simple $\cbF_\ell$-character (this is proved in~\cite[Proposition~2.37]{MStypes}).

\begin{lemma}\label{modlreduction}
Let~$\pi$ be an integral $\cbQ_\ell$-representation of~$\GL_m(D)$ which is simple of endo-class~$\Theta_F$. Fix a lift~$\Theta_E \to \Theta_F$ and a conjugacy class~$\kappa$. 
If~$\tau$ is an irreducible factor of~$\br_\ell(\pi)$, then 
\[
\Lambda_{\cbF_\ell}(\tau, \Theta_E, \br_\ell(\kappa)) = \Lambda_{\cbQ_\ell}(\pi, \Theta_E, \kappa)^{(\ell)}.
\]
\end{lemma}
\begin{proof}
The representation~$\pi$ is a subquotient of a parabolic induction $\chi_1\pi^0 \times \cdots \times \chi_r\pi^0$ for an integral supercuspidal representation~$\pi^0$ of some~$\GL_{m/r}(F)$ and unramified characters~$\chi_i$ valued in~$\overline{\bZ}_\ell^\times$.

Recall that~$\Lambda_{\cbF_\ell}(\tau, \Theta_E, \br_\ell(\kappa)) $ is the inflation of the character orbit corresponding to the supercuspidal support of any irreducible factor of $\bK_{\br_{\ell}(\kappa)}(\tau)$, under any isomorphism $J_\theta/J^1_\theta \to \GL_{m'}(\be_{d'})$ in the conjugacy class~$\Psi(\Theta_E)$. 
So it suffices to prove that the supercuspidal support of any irreducible factor of $\bK_{\br_{\ell}(\kappa)}(\br_\ell(\pi))$ is a product of representations attached to $\Lambda_{\cbQ_\ell}(\pi, \Theta_E, \kappa)^{(\ell), \reg}$, which is the same as $\Lambda_{\cbQ_\ell}(\pi, \Theta_E, \kappa)^{\reg, (\ell), \reg}$.

By~\cite[Lemme~5.11]{MStypes}, the equality $\br_\ell[\bK_{\kappa}(\pi)] = [\bK_{\br_\ell(\kappa)}(\br_\ell(\pi))]$ holds. 
If~$\sigma$ is an irreducible factor of~$\bK_\kappa(\pi)$, the supercuspidal support~$\scusp_{\cbQ_\ell}(\sigma)$ is contained in $[\bK_{\kappa_0}(\pi^0)]^{\otimes r}$, where~$\kappa_0$ is compatible with~$\kappa$ (this is part of the definition of compatibility). 
So, $[\sigma]$ is contained in the parabolic induction of $[\bK_{\kappa_0}(\pi^0)]^{\otimes r}$.
Hence, the reduction~$\br_\ell[\sigma]$ is contained in the parabolic induction of $\br_\ell[\bK_{\kappa_0}(\pi^0)]^{\otimes r}$.
It follows that the supercuspidal support of every irreducible factor of $\br_\ell[\bK_{\kappa}(\pi)]$ is the supercuspidal support of a factor of $\br_\ell[\bK_{\kappa_0}(\pi^0)]^{\otimes r}$.

By definition, $[\bK_{\kappa_0}(\pi^0)]$ goes under any isomorphism in~$\Psi(\Theta_E)$ to the sum of the representations attached to~$\Lambda_{\cbQ_\ell}(\pi, \Theta_E, \kappa)^{\reg}$. The supercuspidal support of the reduction of any of these representations is a multiple of a representation attached to~$\Lambda_{\cbQ_\ell}(\pi, \Theta_E, \kappa)^{\reg, (\ell), \reg}$. Hence the supercuspidal support of every factor of~$\br_\ell [\bK_{\kappa_0}(\pi^0)]^{\otimes r}$ is a product of representations attached to~$\Lambda_{\cbQ_\ell}(\pi, \Theta_E, \kappa)^{\reg, (\ell), \reg}$. 
This concludes the proof of the lemma.
\end{proof}

Finally, we determine how the map~$\Lambda_R(-, \Theta_E, \kappa)$ changes when we change the lift~$\Theta_E \to \Theta_F$ or the $\beta$-extension~$\kappa$.
If~$\kappa_1, \kappa_2$ are $\beta$-extensions of a maximal simple character~$\theta$, by~(\ref{twistbeta}) there exists a character~$\chi$ of~$\be^\times$ such that~$\kappa_2 \cong \chi \otimes \kappa_1$.
By definition of the $\bK$-functors, we find that
\begin{equation}\label{changebetaextension}
\Lambda_R(-, \Theta_E, \kappa_2) = \chi\Lambda_R(-, \Theta_E, \kappa_1).
\end{equation}
Similarly, let~$\gamma \in \Gal(E/F)$ and lift it to an automorphism of~$\be_{d'}$. 
Given an isomorphism $J_\theta/J^1_\theta \to \GL_{m'}(\be_{d'})$ in the conjugacy class~$\Psi(\Theta_E)$, induced by an embedding $E \to F[\beta]$ in a parameter field, one sees that the isomorphism $J_\theta/J^1_\theta \to \GL_{m'}(\be_{d'})\xrightarrow[]{\gamma}  \GL_{m'}(\be_{d'})$ is in the conjugacy class~$\Psi(\gamma^*\Theta_E)$. 
It follows that
\begin{equation}\label{changelift}
\gamma^*\Lambda_R(-, \gamma^*\Theta_E, \kappa) = \Lambda_R(-, \Theta_E, \kappa).
\end{equation}

\paragraph{Conclusion.}
Let us fix for every endo-class~$\Theta_F$ a lift~$\Theta_E \to \Theta_F$ and a conjugacy class~$\kappa(\Theta_F)$ of $\beta$-extensions of maximal simple characters in~$G = \GL_m(D)$.
We will always form level zero maps using this choice of lift, and so we will sometimes omit~$\Theta_E$ from the notation.
The following theorem says that our two invariants give a complete parametrization of simple inertial classes of~$R[G]$.

\begin{thm}\label{parametrization}
The map $ \inv: \fs \mapsto (\cl(\fs), \Lambda_{R}(\fs, \kappa(\cl \fs)))$ is a bijection from the set of simple inertial classes of $R[\GL_m(D)]$-representations to the set of pairs $(\Theta_F, [\chi])$ consisting of an endo-class~$\Theta_F$ of degree dividing~$n = md$ and a character orbit $[\chi] \in \Gamma(\Theta_F) \backslash X_R(\Theta_F)$.
\end{thm}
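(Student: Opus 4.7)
The plan is to split the bijection into its injectivity and surjectivity parts, reducing both to the supercuspidal case via the choice of supercuspidal support, and then to handle the supercuspidal case using the classification of maximal simple types (theorem~\ref{types}) combined with Green's parametrization of cuspidal representations of $\GL_{m'}(\be_{d'})$, transported across the isomorphism $\Psi(\Theta_E)$ from proposition~\ref{compatibleconjugacy}. The central auxiliary tool is lemma~\ref{normdescent}, which via the norm-inflation description of $\Lambda_\kappa$ lets me read off the size of the supercuspidal support from the stabilizer field of $[\chi]$.

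First I would treat injectivity. Assume $\fs_1, \fs_2$ share endo-class $\Theta_F$ and level zero invariant $[\chi]$. By proposition~\ref{conjugateembeddings} any two maximal simple characters of endo-class $\Theta_F$ in $G$ are conjugate, so I may arrange that both supercuspidal supports contain a common maximal simple character $\theta$ with a common $\beta$-extension $\kappa = \kappa(\Theta_F)$. Writing the support of $\fs_i$ as $[\GL_{m/s_i}(D), \pi_0^{i,\times s_i}]$, the transitivity of compatible $\beta$-extensions recalled at the end of section~\ref{levelzeromaps} together with the norm-inflation description of $\Lambda_\kappa$ show that $\Lambda_\kappa(\fs_i)$ is the inflation of $\Lambda_{\kappa_0}(\pi_0^i)$ from $\be_{n/(s_i\delta(\Theta_F))}^\times$ to $\be_{n/\delta(\Theta_F)}^\times$. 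By lemma~\ref{normdescent} the stabilizer field of the resulting orbit is $\be_{n/(s_i\delta(\Theta_F))}$, so $s_1 = s_2 = s$ and $\Lambda_{\kappa_0}(\pi_0^1) = \Lambda_{\kappa_0}(\pi_0^2)$, reducing me to the supercuspidal case. For supercuspidals, theorem~\ref{types} together with the description of $\bJ(\lambda)/J_\theta$ in terms of the $\Gal(\be_{d'}/\be)$-orbit of the cuspidal factor $\sigma$ absorbs exactly the ambiguity between cuspidal representations of $\GL_{m'/s}(\be_{d'})$ and their $\Gal(\be_{d'}/\be)$-orbits, completing this step.

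For surjectivity, I would apply lemma~\ref{normdescent} to $[\chi]$ to extract an $\be$-regular character $\chi^{\reg}$ on $\be[\chi]$, and let $s$ be determined by $\be[\chi] = \be_{n/(s\delta(\Theta_F))}$. Choose a maximal simple character $\theta_0$ of endo-class $\Theta_F$ in $\GL_{m/s}(D)$ with the $\beta$-extension $\kappa_0$ compatible with $\kappa(\Theta_F)$, and use $\Psi(\Theta_E)$ to view $\chi^{\reg}$ as corresponding to a cuspidal representation $\sigma_0$ of $\GL_{m'/s}(\be_{d'})$ through Green's parametrization. Inflating $\sigma_0$ to a cuspidal type on $J_{\theta_0}/J^1_{\theta_0}$, forming the maximal simple type $(J_{\theta_0}, \kappa_0 \otimes \sigma_0)$, extending to its normalizer, and compactly inducing produces a supercuspidal $\pi_0$ of $\GL_{m/s}(D)$, so that $\fs = [\GL_{m/s}(D), \pi_0^{\times s}]$ is a simple inertial class with $\inv(\fs) = (\Theta_F, [\chi])$ by construction.

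The main obstacle is verifying that the arithmetic setup works uniformly: one must show that every orbit $[\chi]$ determines a valid pair (a supercuspidal support size $s$ with $s$ dividing both $m$ and $m'$, together with a supercuspidal inertial class on $\GL_{m/s}(D)$ of endo-class $\Theta_F$) through the stabilizer field analysis, and that conversely every such pair produces a unique orbit. This amounts to a careful comparison between the three Galois groups $\Gal(\be_{n/\delta(\Theta_F)}/\be)$, $\Gal(\be[\chi]/\be)$ and $\Gal(\be_{d'}/\be)$; the key identities are $m'd' = n/\delta(\Theta_F)$ and $d' = d/(d,\delta(\Theta_F))$, and the $\be_{d'}$-regularity of the $\chi^{\reg}$ attached to supercuspidal supports should follow from the supercuspidality of $\pi_0$ via the discussion following the level zero map.
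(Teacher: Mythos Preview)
Your overall strategy---reduce to the supercuspidal case via norm-descent, then invoke theorem~\ref{types} and the Green parametrization---is sound and close to the paper's argument. However, there is a genuine error in how you recover the size of the supercuspidal support from $[\chi]$.

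You write that the stabilizer field of the inflated orbit is $\be_{n/(s_i\delta(\Theta_F))}$, and in the surjectivity step you set $s$ by $\be[\chi] = \be_{n/(s\delta(\Theta_F))}$. Both claims use the stabilizer field over~$\be$, but this is the wrong base field. The character orbit $\Lambda_{\kappa_0}(\pi_0)$ attached to a supercuspidal $\pi_0$ of $\GL_{m/s}(D)$ consists of characters of $\be_{n/(s\delta(\Theta_F))}^\times$ that are $\be_{d'}$-regular, not $\be$-regular; this is exactly the condition that the corresponding representation of $\GL_{m'/s}(\be_{d'})$ be supercuspidal. So lemma~\ref{normdescent} applied over~$\be$ need not give back $\be_{n/(s\delta(\Theta_F))}$, and your $s$ can fail to divide~$m$. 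A concrete counterexample: take $G = D^\times$ with $d>1$ and trivial endo-class, so $m=1$, $m'=1$, $d'=d$, $\be = \mbf$. Every simple inertial class is supercuspidal with $s=1$, but the trivial character of $\mbf_d^\times$ has $\be[\chi] = \mbf$, so your recipe gives $s=d$ and asks for $\GL_{1/d}(D)$.

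The fix is to apply lemma~\ref{normdescent} with base field $\be_{d'}$ rather than~$\be$: the $\be_{d'}$-stabilizer field $\be_{d'}[\chi]$ of any $\chi \in [\chi]$ equals $\be_{n/(s\delta(\Theta_F))}$ for the correct~$s$, and the resulting $\be_{d'}$-regular descent $\chi^{\reg}$ corresponds via Green to a supercuspidal of $\GL_{m'/s}(\be_{d'})$. With this correction (and the arithmetic check that $s \mid m'$ is equivalent to $\delta(\Theta_F) \mid n/s$, as in the example following the paper's proof), your plan goes through; indeed the paper takes essentially the same route, phrasing it instead as ``the image of the supercuspidal classes consists of the $\be_{d'}$-regular orbits'' and then taking the union over divisors $r_0 \mid m$.
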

\begin{proof}
The map~$\inv$ determines whether a simple inertial class~$\fs$ is supercuspidal, in terms of the regularity of $\Lambda_R(\fs, \kappa(\cl \fs))$. 
A supercuspidal inertial class~$\fs$ is determined by the conjugacy class of maximal simple types it contains, which can be recovered from $\inv(\fs)$. 
To see this, write~$\inv(\fs) = (\Theta_F, [\chi])$. 

Let~$\theta$ be a maximal simple character in~$\GL_{m}(D)$ with endo-class~$\Theta_F$. Let~$\kappa$ be the $\beta$-extension of~$\theta$ in the conjugacy class~$\kappa(\Theta_F)$. (To see that~$\kappa$ is well-defined, notice that if two $\beta$-extensions of the same maximal simple character are $G$-conjugate then they coincide. Indeed, if~$g \in G$ normalizes~$\theta$, then it normalizes~$\kappa$, because the normalizer~$\bJ(\theta)$ of~$\theta$ in~$G$ normalizes $J_\theta$, which is the unique maximal compact subgroup of~$\bJ(\theta)$. But~$\theta$ and~$\kappa$ have the same $G$-intertwining, by a defining property of $\beta$-extensions.)

Then, by construction, the maximal simple types of~$\fs$ have the form $(J_\theta, \kappa \otimes \sigma)$, where $\sigma$ is any supercuspidal representation of~$\GL_{m'}(\be_{d'})$ in the orbit corresponding to~$[\chi]$, inflated to~$J_\theta/J^1_\theta$ under any element of~$\Psi(\Theta_E)$.
(To see directly that these types are conjugate in~$G$, let $[\fA, \beta]$ be a simple stratum defining~$\theta$, and  fix an $F[\beta]$-linear isomorphism $B \to M_{m'}(D')$. Then the normalizer $\bJ(\theta)$ equals~$\pi_{D'}^\bZ \ltimes J_\theta$, and conjugation by~$\pi_{D'}$ acts as the Frobenius element of~$\Gal(\bd' / \be)$ on~$\bd'$.)
 
Hence our map is injective when restricted to supercuspidal inertial classes, and its image consists of those $(\Theta_F, [\chi])$ such that~$[\chi]$ is $\be_{d'}$-regular. The result for all simple inertial classes follows from this. 
Indeed, assume that $\fs$ has supercuspidal support~$\fs_0^{\otimes r}$, and~$\kappa(\fs)_0$ is the conjugacy class of $\beta$-extensions in~$\GL_{m/r}(D)$ compatible with~$\kappa(\fs)$. 
Then 
\[
\cl(\fs) = \cl(\fs_0) \text{ and } \Lambda_R(\fs, \kappa(\fs))^\reg = \Lambda_R(\fs_0, \kappa(\fs)_0)
\]
and the claim follows, since we see that~$\inv(\fs)$ determines the inertial supercuspidal support of~$\fs$.
\end{proof}

\begin{rk}
We emphasize again that the construction of the map~$\fs \mapsto \Lambda_R(\fs, \kappa(\fs))$ depends on the choice of a lift of~$\cl(\fs)$ to its unramified parameter field. 
Equivalently, we can interpret the inverse to~$\inv$ as a map 
\[
(\Theta_F, \Theta_E, [\chi]) \mapsto \fs_G(\Theta_F, \Theta_E, [\chi]), 
\]
with finite fibers consisting of orbits of $\Gal(\be / \mbf)$ acting diagonally by $g\cdot(\Theta_F, \Theta_E, [\chi]) = (\Theta_F, g^*\Theta_E, (g^{-1})^*[\chi])$. 

The triple~$(\Theta_F, \Theta_E, [\chi])$ corresponds to a supercuspidal inertial class of~$\GL_n(F)$ if and only if~$[\chi]$ consists of~$\be$-regular characters of~$\be_{n/\delta(\Theta_F)}^\times$. 
If this happens, then $\fs_G(\Theta_F, \Theta_E, [\chi])$ is supercuspidal for all inner forms~$G$ of~$\GL_n(F)$. 
When the inner form is~$D^\times$ for a division algebra~$D$, one has~$\be_{n/ \delta(\Theta_F)} = \be_{d'}$, so every triple is supercuspidal for~$D^\times$---of course, this is as expected because $D^\times$ has no nontrivial rational parabolic subgroups and so every irreducible smooth representation is supercuspidal.
\end{rk}

\begin{rk}\label{compareendo-classes} In~\cite{BSSV} there is assigned an endo-class~$\Theta(\fs)$ to every simple inertial class~$\fs$ of complex representations of~$\GL_m(D)$, defined to be the endo-class of any simple character contained in representations in~$\fs$. Since~$\fs$ need not be supercuspidal, these characters need not be maximal simple characters, but if $\fs_0$ is supercuspidal then $\Theta(\fs_0) = \cl(\fs_0)$ by definition. 
In addition, one knows that $\Theta(\fs) = \Theta(\fs_0)$ if~$\fs$ is inertially equivalent to a multiple of~$\fs_0$. 
(This fact is implicit in the construction of compatible $\beta$-extensions. See also the proof of~\cite[Theorem~6.6]{SecherreStevensJL}).) 
Then by construction we see that $\cl(\fs) = \Theta(\fs)$ for every simple inertial class~$\fs$.
\end{rk}

\begin{rk}\label{invariants}
In~\cite[Section~3.4]{MStypes} there is defined a number of invariants attached to a cuspidal representation~$\rho$ of~$\GL_m(D)$. If~$(J, \kappa \otimes \sigma)$ is a maximal simple type in~$\rho$, these are
\begin{enumerate}
\item $n(\rho)$, the \emph{torsion number}, which is the number of unramified characters~$\chi$ of~$G$ such that $\rho \otimes \chi \cong \chi$
\item $b(\rho)$, the size of the orbit of~$\sigma$ under the action of~$\Gal(\be_{d'}/\be)$
\item $s(\rho)$, the order of the stabilizer of~$\sigma$ in~$\Gal(\be_{d'}/\be)$
\item $f(\rho) = n/e(\Theta_F)$.
\end{enumerate}
These only depend on the inertial class of~$\rho$ and can be read off from our parametrization. We make this explicit over the complex numbers. Write $\inv(\rho) = (\Theta_F, [\chi])$. We have the equality $f(\rho) = n(\rho)s(\rho)$, by an explicit computation using~\cite[2.6.2(4)(b)]{BHJL}  (or see~\cite[Equation~(3.6)]{MStypes}). 
We also note that~\cite[Section~2]{BHJL} defines a \emph{parametric degree} for all simple representations, and for a supercuspidal~$\rho$ this equals~$n/s(\rho)$ (see~\cite[Section~3.1]{SecherreStevensJL}). 

The stabilizer~$S_1$ of any representative~$\chi$ of~$[\chi]$ under~$\Gal(\be_{n/\delta(\Theta_F)}/ \be)$ is isomorphic to the stabilizer~$S_2$ of the corresponding cuspidal representation of~$\GL_{m'}(\be_{d'})$ under~$\Gal(\be_{d'}/ \be)$. (Indeed, $S_1$ surjects onto~$S_2$ by restriction, and $S_1 \cap \Gal(\be_{n/\delta(\Theta_F)} / \be_{d'}) = 1$, because~$[\chi]$ consists of $\be_{d'}$-regular characters.) The quantity~$s(\rho)$ therefore also equals~$s[\chi]$, the order of the stabilizer of any element of~$[\chi]$ under~$\Gal(\be_{n/\delta(\Theta_F)}/ \be)$. We will also denote by~$b[\chi]$ the size of the orbit under~$\Gal(\be_{d'}/\be)$ of any representation of~$\GL_{m'}(\be_{d'})$ of the form~$\sigma(\chi)$ for $\chi \in [\chi]$.
\end{rk}

\section{The inertial Jacquet--Langlands correspondence.}
We now proceed to our main theorems. 
As in the previous section, we fix lifts $\Theta_E \to \Theta_F$ of all endo-classes of degree dividing~$n$. 
From now on, for any endo-class~$\Theta_F$ we will work with the conjugacy class~$\kappa$ of $p$-primary maximal $\beta$-extensions in any inner form of~$\GL_n(F)$, which then determines a conjugacy class in any~$\GL_m(D)$ uniquely by compatibility. 
This compatible conjugacy class, however, may not be $p$-primary.
We will work over the complex numbers unless stated otherwise, and so we will sometimes shorten notation to~$\fs \mapsto \Lambda_\kappa(\fs)$ for the level zero map $\fs \mapsto \Lambda_\bC(\fs, \Theta_E, \kappa)$.

\subsection{A character formula.}
Consider a supercuspidal irreducible representation~$\pi$ of~$G$. 
Let $\fs$ be the inertial class of~$\pi$ and let~$\kappa$ be the $p$-primary $\beta$-extension of endo-class~$\Theta_F = \cl(\fs)$ in~$G$. 
Let $(J_\theta, \lambda)$ be a maximal simple type for~$\fs$, so that~$\pi$ is the compact induction of an extension~$\Pi$ of~$\lambda$ to its normalizer~$\bJ(\lambda)$. 
Write~$[\chi]$ for the level zero part~$\Lambda_\kappa(\fs)$.
The type $(J_\theta, \lambda)$ is constructed from a maximal simple character~$\theta$ with endo-class~$\Theta_F$, and we fix a simple stratum $[\fA, \beta]$ defining~$\theta$. 
Let $T = \Fbetaur$, an unramified parameter field for~$\theta$. 

Write~$B = Z_A(F[\beta]) \cong M_{m'}(D')$ for the commutant of $F[\beta]$ in~$A$. Fix an extension $K^+/F[\beta]$ in~$B$ that has maximal degree, is unramified and normalizes the order~$\fA$. Such a~$^+$ exists by the arguments in the proof of proposition~\ref{conjugateembeddings}. Consider the maximal unramified extension~$K$ of~$F$ in~$K^+$, write~$A_K$ for the commutant~$Z_A(K)$, and let~$G_K = A_K^\times$. In this context, the normalizer~$N_G(K)$ acts on~$G_K$, and there is an isomorphism
\begin{displaymath}
N_G(K)/G_K \to \Gal(K/F)
\end{displaymath}
by the conjugation action on~$K$. It follows that $\Gal(K/F)$ has a right action on isomorphism classes of representations of~$G_K$: if~$\tau$ is a representation and $t_\alpha \in N_G(K)$ maps to $\alpha \in \Gal(K/F)$, denote by $\tau^\alpha$ the representation $g \mapsto \tau(t_\alpha g t_\alpha^{-1})$. The isomorphism class of~$\tau^\alpha$ is independent of the choice of preimage~$t_\alpha$ of~$\alpha$. If~$\tau$ has endo-class $\Theta_K$, then $\tau^\alpha$ has endo-class $\alpha^*\Theta_K$.

Since~$\beta$ commutes with~$K$ and generates a field~$K^+ = K[\beta]$ over~$K$, and $(K^+)^\times \subseteq \fK(\fA)$, proposition~\ref{interiorlifting} applies and~$\theta$ has an interior $K$-lift~$\theta_K$. This is a character of~$H^1_K = H^1_\theta \cap B$, and it is defined by the simple stratum $[\fA_K, \beta]$ for $\fA_K = \fA \cap A_K$. As in lemma~\ref{maximallift}, it is a maximal simple character.

Take the Heisenberg representation $\eta_K$ of~$J^1_K$ attached to~$\theta_K$. Let $\kappa_K$ be its $p$-primary $\beta$-extension to~$J_K$. Then, if we set~$\lambda_K = \kappa_K$ we obtain a maximal simple type in~$G_K$: it corresponds to the trivial character of $J_K/J^1_K \cong \mu_K = \mu_{K^+}$.

By the discussion after theorem~\ref{types}, the normalizer~$\bJ(\lambda_K)$ equals $(K^+)^\times J_K$. The representation~$\lambda_K$ extends to $K^\times J_K = \pi_K^\bZ \times J_K$ by letting~$\pi_K$ act trivially, and since $\bJ(\lambda_K) / K^\times J^1_K$ is cyclic of order~$e(F[\beta] / F)$ it also extends to~$\bJ(\lambda_K)$. However, as $F[\beta] / F$ might be wildly ramified, we cannot normalize the extension via the order of the determinant as in proposition~\ref{p-primarywideextension}. 
We will refer to any representation obtained by compactly inducing one of these extensions from $\bJ(\lambda_K)$ to~$G_K$ as a \emph{$K$-lift} of~$\pi$. 
These are all supercuspidal irreducible representations of~$G_K$ with the same inertial class, hence the ambiguity in the definition will not affect arguments concerning the inertial class.

\paragraph{}
Let~$\tau$ be a $K$-lift of~$\pi$. The representations~$\pi$ and~$\tau$ are related via their characters, through a formula due to Bushnell and Henniart in the context of essentially tame endo-classes (see~\cite[Section~6]{BHJL}). 
Recall that~$\pi$ contains a maximal simple type $(J_\theta, \lambda)$ constructed from~$[\fA, \beta]$, and that $T = \Fbetaur$.

\begin{thm}\label{characterformula} Let $\zeta \in \mu_K$ generate the field~$K$ over~$F$, and let~$u$ be an elliptic, regular and pro-unipotent element of~$G_K$. Then 
\begin{displaymath}
\tr \pi(\zeta u) = (-1)^{m' + 1}s[\chi]^{-1}\epsilon_\theta(\zeta) \sum_{\alpha \in \Gal(\bk/\mbf)} \chi(\zeta^\alpha)\tr \tau^\alpha(u)
\end{displaymath}
where~$\chi$ is evaluated at~$\zeta$ via any $\be$-linear isomorphism $\iota: \bk \to \be_{n/\delta(\Theta_F)}$, where~$\bk$ is an $\be$-algebra via $\iota(\Theta_E)_T : \be \to \bt$.
\end{thm}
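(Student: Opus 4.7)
The strategy follows Bushnell--Henniart's treatment of the essentially tame case in section~6 of~\cite{BHJL}, extended to general endo-classes with careful bookkeeping of the rigidifications~$\iota$ introduced in section~\ref{liftsandrigidifications}. The starting point is Harish-Chandra's formula for the character of the compact induction $\pi = \ind_{\bJ(\lambda)}^G \Pi$. Since $\zeta u$ is regular and elliptic,
\[
\tr \pi(\zeta u) = \sum_{g} \tr \Pi(g(\zeta u)g^{-1}),
\]
the sum running over $\bJ(\lambda)$-cosets of $g \in G$ such that $g(\zeta u)g^{-1} \in \bJ(\lambda)$.

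Next, I would parametrize the contributing cosets. Since $\zeta$ generates $K = F[\zeta]$ over~$F$ and $\zeta u$ is elliptic, any conjugate $g\zeta g^{-1} \in \bJ(\lambda)$ must, modulo~$J^1_\theta$, arise from the image of~$\mu_L$ in~$J_\theta/J^1_\theta$. Combined with Skolem--Noether applied to the embedding $F[\zeta] \hookrightarrow A$ and the structure $\bJ(\lambda) = \pi_{D'}^{b[\chi]\bZ} \ltimes J_\theta$, the relevant cosets fall into a $\Gal(K/F) = \Gal(\bk/\mbf)$-torsor. Passing from the summands $\tr \Pi(g(\zeta u)g^{-1})$ to the canonical representatives $\tau^\alpha$ introduces an overcounting by~$s[\chi]$, since $\tau^\alpha$ depends on~$\alpha$ only modulo the stabilizer of the character orbit~$[\chi]$. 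This accounts simultaneously for the range $\Gal(\bk/\mbf)$ of the sum and the prefactor $s[\chi]^{-1}$.

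Each trace $\tr \Pi(g(\zeta u)g^{-1})$ is then computed by decomposing $\Pi|_{J_\theta} = \kappa \otimes \sigma$. Since $\zeta$ and~$u$ commute (both lie in~$A_K$), the trace factors into three contributions:
\begin{enumerate}
\item the trace of~$\kappa$ at a conjugate~$\zeta^\alpha$ of~$\zeta$ inside~$\mu_L$, which yields the symplectic sign $\epsilon(\zeta, V)$ by the theory recalled at the end of section~\ref{maximalsimpletypesI} applied to $\Gamma = \mu_K$ acting on $V = J^1_\theta/H^1_\theta$;
\item the trace of~$\sigma$ at the image of~$\zeta^\alpha$ in $J_\theta/J^1_\theta \cong \GL_{m'}(\bd')$, which is a regular elliptic element generating $\bd'_{m'} \cong \bk$, given by Green's formula as $(-1)^{m'-1}\chi(\zeta^\alpha)$ (after absorbing Green's Frobenius summation via the $\Gal(\bk/\bd')$-part of the coset parametrization);
\item the residual factor depending only on~$u$, to be identified with $\tr \tau^\alpha(u)$ via Harish-Chandra's formula applied to $\tau^\alpha = \ind_{\bJ(\lambda_K^\alpha)}^{G_K} \Xi^\alpha$, using the identities $H^1_K = H^1_\theta \cap A_K$, $J_K = J_\theta \cap A_K$ and $\kappa_K = \kappa|_{J_K}$ from proposition~\ref{interiorlifting}.
\end{enumerate}

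The main obstacle is the matching of rigidifications across the two sides of the formula. The isomorphism $\iota: \bk \to \be_{n/\delta(\Theta_F)}$ used to read off the character orbit~$[\chi]$ from~$\pi$ must agree, after interior $K$-lifting, with the rigidifications used to parametrize each~$\tau^\alpha$. Unlike in the essentially tame setting of~\cite{BHJL}, where a tame parameter field is available, we only have the unramified parameter field~$T$ at our disposal, so the required $\Gal(K/F)$-equivariance of the rigidifying data must be deduced from the invariance properties of~$\iota$ established in propositions~\ref{urparameterfields} and~\ref{sameendo-classes}, together with the compatibility of interior lifting with the Galois action on endo-classes described at the end of section~\ref{maximalsimpletypesI}. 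Once this compatibility is in place, the factors assemble to the stated identity.
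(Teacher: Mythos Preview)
Your outline follows the paper's argument closely---Mackey's formula, reduction of the coset sum to $N_G(K)$ and then to a $\Gal(K/F)$-indexed sum, the factorisation $\tr\Pi = \tr\sigma \cdot \tr\kappa$ on $J_\theta$, Green's formula for $\tr\sigma(\zeta^\alpha)$, and the final bookkeeping of constants---and your remarks on rigidification are essentially what the paper does to make the evaluation of~$\chi$ well-defined. But there is a genuine gap in your step~(3).

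You write that the residual factor is identified with $\tr\tau^\alpha(u)$ using ``$\kappa_K = \kappa|_{J_K}$ from proposition~\ref{interiorlifting}''. This equality is false: $\kappa$ has dimension $(J^1_\theta : H^1_\theta)^{1/2}$ while $\kappa_K$ has dimension $(J^1_K : H^1_K)^{1/2}$, and these differ whenever the symplectic space~$V$ is nonzero. Proposition~\ref{interiorlifting} only transports the simple character and the groups $H^1, J^1, J$ to~$A_K$; it says nothing about $\beta$-extensions. Relatedly, your items~(1) and~(3) cannot be separated as a product $\tr\kappa(\zeta^\alpha)\cdot f(u)$: the quantity $\tr\kappa(\zeta^\alpha)$ is $t_{\mu_K,V}(\zeta^\alpha)$, which has absolute value $\dim\eta_K$, not~$1$, so the sign~$\epsilon(\zeta,V)$ alone does not account for it.

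The missing ingredient is the \emph{Glauberman correspondence} for the cyclic group $\Gamma = \langle \zeta \rangle$ acting on $J^1_\theta/\ker\theta$. One must show that the Glauberman correspondent~$\eta^\Gamma$ of~$\eta$ coincides with the Heisenberg representation~$\eta_K$ of~$\theta_K$ (this uses that $(J^1_\theta)^\Gamma = J^1_K$ and that $\eta^\Gamma$ contains~$\theta_K$), and then the Glauberman character relation
\[
\tr\widetilde\eta(\zeta^\alpha x) \;=\; \epsilon(\zeta^\alpha,V)\,\tr\eta_K(x), \qquad x \in J^1_K,
\]
together with the fact that the $p$-primary $\beta$-extension~$\kappa$ restricts to~$\widetilde\eta$ on $\Gamma \ltimes J^1_\theta$, gives $\tr\kappa(\zeta^\alpha\, y\alpha(u)y^{-1}) = \epsilon(\zeta,V)\,\tr\lambda_K(y\alpha(u)y^{-1})$ in one stroke. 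Summing over~$y \in \bJ(\lambda_K)\backslash G_K$ then produces $\tr\tau^\alpha(u)$. This is lemma~\ref{Glauberman correspondence} in the paper's proof, and without it the passage from the $\kappa$-trace on~$G$ to the $\lambda_K$-trace on~$G_K$ has no justification.
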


\begin{rk}
It is not immediate that the formula makes sense as written, but we will see while proving the theorem that the characters of~$\tau$ and~$\tau^\alpha$ coincide on~$u$ whenever~$\alpha \in \Gal(\bk/\bt)$, hence the right hand side is independent of the choice of representatives of~$[\chi]$ and of the choice of~$\iota$. 
Recall that the sign~$\epsilon_\theta(\zeta)$ has been defined before proposition~\ref{symplecticcomputation}, and that~$m'$ is defined by the existence of an isomorphism $B = Z_A(F[\beta]) \cong M_{m'}(D')$. 
A \emph{pro-unipotent} element~$u$ of~$G$ is one for which $u^{p^n} \to 1$ as $n \to + \infty$. See remark~\ref{invariants} for the definition of~$s[\chi]$.
\end{rk}

\begin{proof}
By the following lemma, the Harish-Chandra character of~$\pi$ at~$\zeta u$ can be computed by the Mackey formula for an induced representation
\begin{displaymath}
\tr\, \pi(\zeta u) = \sum_{y \in \bJ(\lambda) \backslash G} \tr\, \Pi (y \zeta u y^{-1}),
\end{displaymath}
as in \cite[Section~1.2]{BHJL} and~\cite[Appendix]{BHliftingI}.

\begin{lemma}
The element $\zeta u \in G$ is elliptic and regular over~$F$.
\end{lemma}
\begin{proof}
Since~$F[\zeta u]$ is a finite-dimensional $F$-vector space, it is complete, and so it contains~$\zeta$, which is the limit of a sequence of elements of the form~$(u\zeta)^{p^n}$.
But then it contains~$u$, hence $F[\zeta u] = K[u]$ is a maximal field extension of~$F$ in~$A$.
\end{proof}

The theorem will follow from a careful study of this formula for~$\tr\, \pi(\zeta u)$.

\begin{lemma}\label{conjugacylemma}
If $y \in G$ and $y\zeta uy^{-1} \in \bJ(\lambda)$, then $y\zeta uy^{-1} \in J_\theta$ and there exists $\widetilde{y}$ in the normalizer~$N_G(K)$ such that $\bJ(\lambda) y = \bJ(\lambda) \widetilde{y}$. For any such $\widetilde{y}$, one has $\widetilde{y} u\widetilde{y}^{-1} \in J^1_K$.
\end{lemma}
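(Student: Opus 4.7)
The plan is to establish the three assertions in sequence: a compactness argument to land $y\zeta u y^{-1}$ in $J_\theta$, a matching of maximal tori in the finite reductive quotient to move $y$ into $N_G(K)$, and an order-prime-to-$p$ argument for the final pro-unipotence statement.

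\textbf{Step 1.} Since $u \in G_K$ and $\zeta \in K^\times \subseteq Z(A_K)$ commute, and since $\zeta$ has finite prime-to-$p$ order while $u$ is pro-$p$, the element $\zeta u$ lies in a compact subgroup of $G$. Hence so does $y\zeta u y^{-1} \in \bJ(\lambda)$. Because $\bJ(\lambda) \subseteq \bJ(\theta) = \pi_{D'}^{\bZ} \ltimes J_\theta$, with $J_\theta$ the unique maximal compact subgroup of $\bJ(\theta)$, the element must lie in $J_\theta$. Taking topological Jordan decompositions inside this compact group also places $y\zeta y^{-1}$ and $yuy^{-1}$ in $J_\theta$.

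\textbf{Step 2.} The goal is to produce $j \in J_\theta$ with $j(y\zeta y^{-1}) j^{-1} = \zeta^\alpha$ for some $\alpha \in \Gal(K/F)$, so that $\widetilde{y} = jy$ normalizes $K = F[\zeta]$ while $\widetilde{y} y^{-1} = j \in J_\theta \subseteq \bJ(\lambda)$. I would fix the isomorphism $J_\theta/J^1_\theta \to \GL_{m'}(\bd')$ coming from $\fB \cong M_{m'}(\fo_{D'})$. Since $L/K$ is totally ramified one has $\bl = \bk = \bd'_{m'}$, and the inclusion $\fo_L \hookrightarrow \fB$ identifies $\overline{\mu_K}$ with the unit group of a non-split anisotropic (``Coxeter'') maximal torus of $\GL_{m'}(\bd')$, whose generator $\overline{\zeta}$ has irreducible characteristic polynomial of degree $m'$ over $\bd'$. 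The reduction $\overline{y\zeta y^{-1}}$ has the same prime-to-$p$ order $|\bd'^\times_{m'}|$, because $J^1_\theta$ is pro-$p$; a count of possible partitions of $m'$ rules out every maximal torus type other than Coxeter, so $\overline{y\zeta y^{-1}}$ is also regular with irreducible characteristic polynomial of degree $m'$. Conjugating by $\GL_{m'}(\bd')$ and using that the Weyl group of the Coxeter torus realizes the Galois action on $\bd'^\times_{m'}$, one obtains $g \in \GL_{m'}(\bd')$ with $g \overline{y\zeta y^{-1}} g^{-1} = \overline{\zeta^\alpha}$ for a unique $\alpha \in \Gal(\bk/\bd') \subseteq \Gal(K/F)$. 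A lift of $g$ to $J_\theta$, followed by a Schur--Zassenhaus step applied to the pro-$p$ normal subgroup $J^1_\theta$ and the two prime-to-$p$ complements $\langle \zeta^\alpha \rangle$ and its conjugate of $\langle y\zeta y^{-1}\rangle$ inside their common preimage, upgrades this to the desired equality in $J_\theta$; this is in the spirit of the conjugacy lemma of \cite{BHeffective}, 2.6 already used in Proposition~\ref{urparameterfields}.

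\textbf{Step 3.} For any $\widetilde{y}$ as in the statement, Step~1 applied to $\widetilde{y}$ gives $\widetilde{y}\zeta u \widetilde{y}^{-1} \in J_\theta$, so its pro-unipotent factor $\widetilde{y} u \widetilde{y}^{-1}$ is in $J_\theta$; it also lies in $G_K$ because $\widetilde{y}$ normalizes $K$, whence $\widetilde{y} u \widetilde{y}^{-1} \in J_\theta \cap A_K^\times = J_K$. Since $\theta_K$ has commutant $Z_{A_K}(L) = L$, one has $J_K/J^1_K \cong \fo_L^\times/U^1(\fo_L) = \bl^\times$, a finite group of order prime to $p$. A pro-unipotent element reduces to the identity there, giving $\widetilde{y} u \widetilde{y}^{-1} \in J^1_K$.

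\textbf{Expected main obstacle.} The delicate step is Step~2: identifying both $\overline{\zeta}$ and $\overline{y\zeta y^{-1}}$ as regular generators of Coxeter tori in $\GL_{m'}(\bd')$, pinning down the unique Galois twist $\alpha$ that relates them, and bootstrapping a conjugacy statement from the quotient $J_\theta/J^1_\theta$ back to $J_\theta$ by a Schur--Zassenhaus argument.
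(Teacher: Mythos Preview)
Your argument is correct and follows essentially the same route as the paper's: a compactness/valuation step to land in~$J_\theta$, the Coxeter-torus identification in $J_\theta/J^1_\theta \cong \GL_{m'}(\bd')$ followed by the Conjugacy Lemma of~\cite{BHeffective}~2.6 to produce~$\widetilde{y}\in N_G(K)$, and an order-prime-to-$p$ reduction for the final claim. Two harmless slips: the order of $\overline{y\zeta y^{-1}}$ equals that of~$\zeta$, which need not be $|\bd'^\times_{m'}|$ (but any element of that order in $\GL_{m'}(\bd')$ is still regular elliptic, since a primitive root of that order has degree~$m'$ over~$\bd'$); and the automorphism~$\alpha$ induced by~$\widetilde{y}=jy$ lies a priori only in $\Gal(K/F)$, not in $\Gal(\bk/\bd')$, but you only need $jy\zeta(jy)^{-1}\in\mu_K$ to conclude $jy\in N_G(K)$.
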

\begin{proof}

Since the valuation of the determinant of~$\zeta u$ is zero, and $\bJ(\lambda) / J_\theta$ is infinite cyclic generated by some power of a uniformizer of~$D'$, necessarily $y \zeta u y^{-1} \in J_\theta$ if $y \zeta u y^{-1} \in \bJ(\lambda)$. The quotient $J_\theta / J^1_\theta$ is isomorphic to a general linear group $\GL_{m'}(\be_{d'})$, and the degree~$[K:F]$ equals~$n/e(\Theta_F)$, so~$\bk^\times$ embeds in~$J_\theta/J^1_\theta$ as a maximal elliptic torus. Now the claim follows as in the proof of~\cite[Lemma~13]{BHLLIII}: first prove that $y\zeta y^{-1} \in J_\theta$ by raising to a suitable power of~$p$, and then notice that there exists some other~$\zeta' \in \mu_K$ generating~$K$ over~$F$ with $y\zeta y^{-1}$ conjugate in~$J_\theta$ to~$\zeta'u'$ for some~$u' \in J_\theta^1$. 
By~\cite[ 2.6 Conjugacy Lemma]{BHeffective}, or \cite[Lemma~14]{BHLLIII}, we can further change~$y$ in its $J_\theta$-coset and assume that~$u'$ and~$\zeta'$ commute, and this implies that~$u' = 1$. But then $y\zeta u y^{-1} = \zeta' y u y^{-1}$ with $yuy^{-1}$ commuting with $\zeta'$ and contained in~$J_\theta$. As the image of~$\zeta'$ in~$J_\theta / J^1_\theta$ is a regular elliptic element, it commutes with no unipotent elements except the identity, so $yuy^{-1} \in J^1_\theta$.
\end{proof}

\begin{lemma}\label{normalizeraction}
The group~$\bJ(\lambda) \cap G_K$ equals~$\bJ(\lambda_K)$, and the order of the image of $\bJ(\lambda) \cap N_G(K)$ under the isomorphism $N_G(K)/G_K \to \Gal(K/F)$ equals $n/\delta(\Theta_F)b[\chi] = m'd'/b[\chi]$, where $b[\chi]$ equals the index of~$\bJ(\lambda)$ in $\pi_{D'}^\bZ \ltimes J_\theta$.
\end{lemma}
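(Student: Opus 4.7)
The plan is to establish the two assertions in turn, using the decomposition $\bJ(\theta) = \fK(\fB) J_\theta \cong \pi_{D'}^\bZ \ltimes J_\theta$ together with the companion description $\bJ(\theta_K) = L^\times J_K$ obtained by applying proposition~\ref{samecharacter} inside $A_K$ (here $L$ is a maximal subfield of $A_K$ with unique hereditary order $\fo_L$, because $[L:K] = e(\Theta_F)$ matches the reduced degree of $A_K/K$). For the first assertion, the inclusion $\bJ(\lambda_K) \subseteq \bJ(\lambda) \cap G_K$ reduces to showing $L^\times \subseteq \bJ(\lambda)$. Under the valuation $\bJ(\theta)/J_\theta \to \bZ$ normalized by $\pi_{D'} \mapsto 1$, a uniformizer $\pi_L$ has image $d' = b[\chi] s[\chi]$, because $L/F[\beta]$ is unramified of degree $m'd'$ and hence $\pi_L$ is associate to $\pi_{F[\beta]}$, whose $v_{D'}$-valuation is $d'$; since $d' \in b[\chi]\bZ = \bJ(\lambda)/J_\theta$, this gives $L^\times \subseteq \bJ(\lambda)$. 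Conversely, any $g \in \bJ(\lambda) \cap G_K$ normalizes $H^1_\theta$ and $\theta$, and because $g$ centralizes $K$ it also normalizes $H^1_K = H^1_\theta \cap A_K$ and restricts $\theta$ to $\theta_K$ there, placing $g$ in $\bJ(\theta_K) = L^\times J_K = \bJ(\lambda_K)$.

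For the second assertion, the first step is to exhibit $\pi_{D'}^{b[\chi]} \in \bJ(\lambda) \cap N_G(K)$ producing a cyclic image of the claimed order. This element lies in $\fK(\fB) \subseteq \bJ(\theta)$ with valuation $b[\chi]$; it normalizes $\kappa$ because $\pi_{D'}$ lies in the $G$-intertwining of $\kappa$ and normalizes $J_\theta$, so the twist $\pi_{D'}^{b[\chi]}\kappa\pi_{D'}^{-b[\chi]}$ is isomorphic to $\kappa$ by Schur's lemma; and it fixes the class of $\sigma$ because conjugation by $\pi_{D'}$ induces the Frobenius of $\bd'/\bt$ on $J_\theta/J^1_\theta \cong \GL_{m'}(\bd')$, whose $b[\chi]$-th power generates the stabilizer of $\sigma$. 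Since $\pi_{D'}$ normalizes $L$ via the Frobenius of $L/F[\beta]$, it also normalizes $K = L^{\mathrm{ur}}$ and, under the identification $N_G(K)/G_K \to \Gal(K/F)$, maps to the Frobenius of the unramified extension $K/T$, of order $[K:T] = m'd'$. Hence $\pi_{D'}^{b[\chi]}$ has image of order exactly $m'd'/b[\chi]$ in the cyclic subgroup $\Gal(K/T) \subseteq \Gal(K/F)$.

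The main obstacle is to show that no other element of $\bJ(\lambda) \cap N_G(K)$ contributes outside this cyclic subgroup. Any $g \in \bJ(\lambda) \cap N_G(K)$ can be written $g = \pi_{D'}^{b[\chi] k} j$ with $k \in \bZ$ and $j \in J_\theta \cap N_G(K)$, so it suffices to locate the image of $j$ in $\Gal(K/F)$. I would argue at the level of residues: the image of $\mu_K$ in $J_\theta/J^1_\theta \cong \GL_{m'}(\bd')$ coincides with that of $\mu_L$, both equal to $\bl^\times = \bk^\times$ (the equality $\bk = \bl$ follows from $[\bk:\bt] = [\bl:\bt] = m'd'$), and embeds as a maximal elliptic torus whose normalizer in $\GL_{m'}(\bd')$ acts through $\Gal(\bl/\bd')$. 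Translating through the canonical isomorphism $\Gal(K/F) \cong \Gal(\bk/\mbf)$, this places the image of $j$ inside $\Gal(K/K_0)$, where $K_0 \subseteq K$ is the unique unramified extension of $T$ whose residue field is $\bd'$. A cyclic-group computation in $\Gal(K/T) \cong \bZ/m'd'$ finally puts $\Gal(K/K_0)$, of order $m'$, inside the subgroup of order $m' s[\chi] = m'd'/b[\chi]$ generated by the $b[\chi]$-th power of the Frobenius of $K/T$, since $b[\chi]$ divides $d'$. This completes the bound.
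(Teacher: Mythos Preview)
Your argument for the first assertion is correct and in fact slightly more direct than the paper's: rather than using the self-centralizing property of the elliptic torus under~$\widetilde{\psi}$, you observe directly that an element of~$\bJ(\lambda) \cap G_K$ normalizes~$\theta_K$ and therefore lies in~$\bJ(\theta_K) = L^\times J_K = \bJ(\lambda_K)$. This is fine.

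The second assertion, however, has a genuine error. You claim that~$\pi_{D'}$ normalizes~$L$ via the Frobenius of~$L/F[\beta]$ and therefore maps to an element of order~$m'd'$ in~$\Gal(K/F)$. This is false in general. The clearest counterexample is the case~$d' = 1$ (which includes the split case~$D = F$): then~$D' = F[\beta]$, so~$\pi_{D'} = \pi_{F[\beta]}$ is central in~$B$ and acts trivially on~$L$ and~$K$. When~$m' > 1$ this contradicts your asserted order~$m'd' = m'$. More generally, conjugation by~$\pi_{D'}$ descends via~$\widetilde{\psi}$ to the action of a generator of~$\Gal(\be_{d'}/\be)$ on~$\GL_{m'}(\be_{d'})$, which has order~$d'$, not~$m'd'$; there is no reason this action should preserve the chosen elliptic torus~$\psi(\mu_K)$, and even when it does, its order on~$\mu_K$ is at most~$d'$. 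The same flaw undermines your upper bound: the decomposition $g = \pi_{D'}^{b[\chi]k}j$ with~$j \in J_\theta \cap N_G(K)$ is only valid once you know~$\pi_{D'} \in N_G(K)$, which you have not established (and which fails in general).

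What you are missing is that the full group of automorphisms of~$\mu_K$ realized by~$\bJ(\theta)$ is not generated by~$\pi_{D'}$ alone: the contribution from the Weyl group of the elliptic torus inside~$\GL_{m'}(\be_{d'})$ is essential. The paper handles this by passing through~$\widetilde{\psi}$ to~$\Gal(\be_{d'}/\be) \ltimes \GL_{m'}(\be_{d'})$, embedding this further into~$\GL_{m'd'}(\be)$ to identify the normalizer of~$\psi(\mu_K)$ with~$\Gal(\be_{m'd'}/\be)$ (of order~$m'd'$), and then lifting elements of this normalizer back to~$\pi_{D'}^\bZ \ltimes J_\theta \cap N_G(K)$ via the Conjugacy Lemma (changing~$x$ in its~$J^1_\theta$-coset so that~$x\zeta x^{-1} \in \mu_K$). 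Restricting to~$\bJ(\lambda)$, whose image under~$\widetilde{\psi}$ is~$\Delta \ltimes \GL_{m'}(\be_{d'})$ with~$[\Gal(\be_{d'}/\be) : \Delta] = b[\chi]$, then gives the index~$b[\chi]$ subgroup of~$\Gal(\be_{m'd'}/\be)$.
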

\begin{proof}
Compare~\cite[Proposition~9]{BHLLIII}.
We can determine an element in~$\Gal(K/F)$ by its action on~$\mu_K$. Any choice of isomorphism $\psi: J_\theta/J^1_\theta \to \GL_{m'}(\be_{d'})$ in~$\Psi(\Theta_E)$ induces a surjective group homomorphism
\begin{displaymath}
\widetilde{\psi}: \pi_{D'}^\bZ \ltimes J_\theta \to \Gal(\be_{d'} / \be) \ltimes \GL_{m'}(\be_{d'}) 
\end{displaymath}
which sends~$\pi_{D'}$ to a generator of~$\Gal(\be_{d'}/\be)$ and maps~$\mu_{K}$ isomorphically onto its image, which is an elliptic maximal torus in~$\GL_{m'}(\be_{d'})$, hence self-centralizing in $\Gal(\be_{d'}/\be) \ltimes \GL_{m'}(\be_{d'})$ (to see this, embed this group in~$\GL_{m'd'}(\be)$, where the image of~$\mu_K$ is still an elliptic maximal torus). 
So, if $x \in \pi_{D'}^\bZ \ltimes J_\theta$ centralizes~$\mu_K$ then it is contained in $\pi_{D'}^{d'\bZ} \ltimes J_\theta$, which equals $\pi_{F[\beta]}^\bZ \ltimes J_\theta$ since $\mO_{D'}^\times \subseteq J_\theta$. 
This implies that
\begin{displaymath}
\bJ(\lambda) \cap Z_G(K) = (\pi_{F[\beta]}^\bZ \ltimes J_\theta) \cap Z_G(K) = \pi_{F[\beta]}^\bZ \times J_K = \bJ(\lambda_K),
\end{displaymath}
thereby proving the first claim of the lemma.

The group~$\bJ(\lambda)$ has index~$b[\chi]$ in~$\pi_{D'}^\bZ \ltimes J_\theta$ and contains~$J_\theta$, hence maps under~$\widetilde{\psi}$ to $\Delta \ltimes \GL_{m'}(\be_{d'})$, for $\Delta \subseteq \Gal(\be_{d'}/\be)$ the only subgroup of index~$b[\chi]$.
The normalizer of~$\mu_K$ in~$\Delta \ltimes \GL_{m'}(\be_{d'})$ induces on~$\mu_K$ a group of automorphisms of order $m'd'/b[\chi]$: this is because $\mu_K$ arises up to $\GL_{m'}(\be_{d'})$-conjugation from restricting the scalars of $\be_{m'd'}$ to~$\be_{d'}$.
But any automorphism of~$\mu_K$ induced by a conjugation in $\Delta \ltimes \GL_{m'}(\be_{d'})$ is also induced by a conjugation in $\bJ(\lambda)$ and so the same holds for the normalizer of $\mu_K$ in $\bJ(\lambda)$, which is $\bJ(\lambda) \cap N_G(K)$.
(To see this, observe that if $x \in \pi_{D'}^\bZ \ltimes J_\theta$ and $x \zeta x^{-1} = \zeta' u$ for some $u \in J^1_\theta$ then we can change~$x$ in its $J_\theta$-coset and assume that~$\zeta'$ and~$u$ commute, by \cite[Lemma~14]{BHLLIII}. Then since the order of~$\zeta$ and~$\zeta'$ is prime to~$p$ and~$J_\theta^1$ is a pro-$p$ group we conclude that~$u = 1$, and the claim follows.)

\end{proof}

The space $\bJ(\lambda) N_G(K)$ decomposes into double cosets
\begin{displaymath}
\bJ(\lambda) N_G(K) = \bigcup_{\sigma \in \Gal(K/F)}\bJ(\lambda) t_\sigma G_K
\end{displaymath}
where $t_\sigma \in N_G(K)$ induces~$\sigma$ on~$K$ upon conjugation, and $\bJ(\lambda) t_\sigma G_K = \bJ(\lambda) t_\tau G_K$ if and only if $\tau \sigma^{-1}$ is induced by~$\bJ(\lambda)$. Then by lemma~\ref{conjugacylemma} and lemma~\ref{normalizeraction} we may rewrite the sum as
\begin{displaymath}
\tr \pi(\zeta u) = \sum_{y \in \bJ(\lambda) \backslash \bJ(\lambda) N_G(K)} \tr \Pi (y \zeta u y^{-1}) = (\delta(\Theta_F)b[\chi] / n)\sum_{\alpha \in \Gal(K/F)}\sum_{y \in \bJ(\lambda_K) \backslash G_K}\tr \Pi(y \alpha(\zeta u)y^{-1}).
\end{displaymath}
Here, $\alpha(\zeta u) = t_\alpha \zeta u t_\alpha^{-1}$. 

These~$y$ commute with all the $\alpha(\zeta )$. 
We are now going to fix an isomorphism $\psi: J_\theta/J^1_\theta \to \GL_{m'}(\be_{d'})$ in the conjugacy class~$\Psi(\Theta_E)$, and a representation~$\sigma$ of~$\GL_{m'}(\be_{d'})$ such that $\lambda = \kappa \otimes \psi^*\sigma$. 
Furthermore, we will choose a representative~$\chi$ for~$[\chi]$ such that $\sigma = \sigma(\chi)$ under the Green parametrization. 
(Recall that~$[\chi]$ only determines a $\Gal(\be_{d'}/\be)$-orbit of representations of~$\GL_{m'}(\be_{d'})$. Via the choice of~$\psi$ and~$\sigma$, we are fixing an element of this orbit.) 
Then
\begin{displaymath}
\tr \Pi (y\alpha(\zeta u)y^{-1}) = \tr \Pi(\alpha(\zeta) y\alpha(u)y^{-1}) = \tr \sigma(\zeta^\alpha) \tr \kappa(\zeta^\alpha y\alpha(u)y^{-1})
\end{displaymath}
since~$\Pi$ extends~$\lambda$, where~$\zeta^\alpha = \alpha(\zeta)$ and~$\sigma$ is evaluated at~$\zeta^\alpha$ via~$\psi$.

\begin{lemma}\label{Glauberman correspondence}
The equality
\begin{displaymath}
\tr \kappa(\zeta^\alpha y\alpha(u)y^{-1}) = \epsilon_\theta(\zeta^\alpha)\tr \lambda_K(y\alpha(u)y^{-1})
\end{displaymath}
holds whenever $y\alpha(u)y^{-1} \in J^1_K$.
\end{lemma}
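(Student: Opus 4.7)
The plan is to apply the Glauberman-type character formula for Heisenberg representations, obtained as a refinement of property~(4) of the function $t_{\Gamma, V}$ introduced at the end of Section~\ref{maximalsimpletypesI}, to the commuting pair $(\zeta^\alpha, w)$ with $w := y\alpha(u)y^{-1}$.

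First I set up the Glauberman data. Let $x := \zeta^\alpha$. Since $\alpha \in \Gal(K/F)$ and $\zeta$ generates $K$ over~$F$, so does $x$; equivalently, the reduction $\overline{x} \in \bk$ is $\mbf$-regular. Since $w \in J^1_K \subseteq Z_G(K)$, the elements $x$ and $w$ commute in $J_\theta$. Let $\Gamma = \langle x \rangle$, a cyclic subgroup of~$\mu_K$ of order prime to~$p$, acting by conjugation on $G := J^1_\theta/\ker(\theta)$ and preserving $N := H^1_\theta/\ker(\theta)$ and~$\theta$. By Proposition~\ref{interiorlifting}, the $\mu_K$-fixed subspace of $V = J^1_\theta/H^1_\theta$ equals $J^1_K/H^1_K$, which is the symplectic space attached to~$\theta_K$; the $\mbf$-regularity of $\overline{x}$ implies that $\Gamma$ has the same image as $\mu_K$ in $\Gal(\bk/\mbf)$, from which one deduces $V^\Gamma = V^{\mu_K}$.

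Next, I match the representations appearing in the lemma with canonical extensions from~\cite{BFbook}~8.3.3. The restriction of $\kappa$ to $\Gamma \cdot J^1_\theta/\ker(\theta) \cong \Gamma \ltimes G$ is an extension of~$\eta$; since $\kappa$ is $p$-primary and $|\Gamma|$ is prime to~$p$, the character $\det \kappa|_\Gamma$ has order both a power of~$p$ and prime to~$p$, hence is trivial, so this restriction agrees with the unique extension $\widetilde\eta$ of~$\eta$ with trivial determinant on~$\Gamma$. The same argument applied to the interior lift shows that $\lambda_K = \kappa_K$ restricted to $\Gamma \ltimes G^\Gamma$ is the trivial-determinant extension $\widetilde{\eta_K}$ of the Heisenberg representation $\eta_K$ of~$\theta_K$.

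Finally, I apply the Glauberman character formula
\begin{displaymath}
\tr \widetilde\eta(xv) = \epsilon(x, V)\,\tr \eta_K(v) \qquad \text{for every } v \in G^\Gamma.
\end{displaymath}
At $v = 1$ this reduces to property~(4) of $t_{\Gamma, V}$ combined with the orthogonal decomposition $V = V^\Gamma \perp W$ of $\Gamma$-symplectic spaces: multiplicativity gives $t_{\Gamma, V}(x) = t_{\Gamma, V^\Gamma}(x)\,t_{\Gamma, W}(x)$; on $V^\Gamma$ the action is trivial, so $t_{\Gamma, V^\Gamma}(x) = \sqrt{|V^\Gamma|} = \dim\eta_K$ and $\epsilon(x, V^\Gamma) = 1$, while $W^\Gamma = 0$ forces $|t_{\Gamma, W}(x)| = 1$ and $\epsilon(x, W) = \epsilon(x, V)$. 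The general case for $v \in G^\Gamma$ follows from the standard Clifford-theoretic description of $\widetilde\eta|_{\Gamma \ltimes G^\Gamma}$. Taking $v = w$ and using $\tr \widetilde{\eta_K}(w) = \tr \eta_K(w) = \tr \lambda_K(w)$ yields the lemma. The main subtlety is the matching in the second step: the $p$-primary normalization of the $\beta$-extensions is defined using the reduced norm on the full groups~$J_\theta$ and~$J_K$, and one must check it restricts to the trivial-determinant normalization on $\Gamma$ that enters the Bushnell--Fr\"ohlich formalism---which is straightforward once both conditions are unwound as above, but forms the essential compatibility that allows the Glauberman formula to apply.
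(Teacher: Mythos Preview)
Your argument follows the same line as the paper's: identify the restriction of the $p$-primary $\beta$-extension~$\kappa$ to $\Gamma \ltimes J^1_\theta$ with the trivial-determinant extension~$\widetilde\eta$, and then apply the Glauberman correspondence for~$\Gamma$ acting on $J^1_\theta/\ker(\theta)$. The difference lies in how you justify the character identity $\tr\widetilde\eta(xv) = \epsilon(x,V)\,\tr\eta_K(v)$ for general $v \in G^\Gamma$. The paper invokes the Glauberman correspondence as a black box (giving an irreducible $\eta^\Gamma$ of $(J^1_\theta/\ker\theta)^\Gamma$ and a sign~$\epsilon$ with $\tr\eta^\Gamma(v) = \epsilon\,\tr\widetilde\eta(xv)$ for all~$v$), then \emph{identifies} $\eta^\Gamma = \eta_K$ by substituting $v = vh$ with $h \in H^1_K$ to show $\eta^\Gamma$ contains~$\theta_K$, and finally reads off~$\epsilon$ at $v=1$. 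Your orthogonal-decomposition computation of $t_{\Gamma,V}(x)$ handles only the case $v=1$; your appeal to a ``standard Clifford-theoretic description'' for general~$v$ is precisely the Glauberman correspondence, but you have not verified that the correspondent of~$\eta$ is~$\eta_K$ rather than some other irreducible of~$J^1_K/\ker(\theta_K)$. The multiplicativity of $t_{\Gamma,-}$ is a statement about values at elements of~$\Gamma$, not at $xv$ with $v \neq 1$, so it does not by itself extend your $v=1$ check. The missing ingredient is exactly the short computation the paper performs: replace $v$ by $vh$ for $h \in H^1_K$ and use $\widetilde\eta|_{H^1_\theta} = \theta \cdot \mathrm{id}$ to see that the Glauberman correspondent has $\theta_K$ in its restriction, hence equals~$\eta_K$ by uniqueness of the Heisenberg representation.
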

\begin{proof}
Compare~\cite[Section~5.2]{BHeffective}. 
We use the Glauberman correspondence of proposition~\ref{pp:Glaubermancorrespondence} for the cyclic group $\Gamma \subseteq\mu_K$ generated by~$\zeta$, acting on~$J^1_\theta/\ker(\theta)$ and normalizing~$\eta$. 
This implies that there exist a unique irreducible representation~$\eta^\Gamma$ of $(J^1/\ker(\theta))^\Gamma$ and sign $\epsilon_\theta(\zeta) = \pm 1$ such that
\begin{equation}\label{eqn:Glaubermancorrespondence}
\tr \widetilde{\eta}(\zeta x) = \epsilon_\theta(\zeta)\tr \eta^\Gamma(x)
\end{equation} 
for all $x \in (J^1/\ker(\theta))^\Gamma$ and every generator~$\zeta$ of~$\Gamma$. 

Recall from section~\ref{maximalsimpletypesI} that~$\widetilde{\eta}$ is the only irreducible representation of $\Gamma \ltimes J^1_\theta$ with trivial determinant on~$\Gamma$. 
Hence, by construction, $\widetilde{\eta}$ is isomorphic to the restriction of the $p$-primary $\beta$-extension~$\kappa$ to $\Gamma \ltimes J^1_\theta$, since $\det \kappa$ has order a power of~$p$ and~$\Gamma$ has order prime to~$p$. 
On the other hand, by proposition~\ref{pp:Glaubermancorrespondence} we have~$\eta^\Gamma = \eta_K$, the Heisenberg representation associated to~$\theta_K$.
To complete the proof of the lemma it suffices to notice that since~$\zeta$ generates~$K$ over~$F$, $\ker(\theta)$ is a pro-$p$ group and $\Gamma$ has order prime to~$p$, the argument in~\cite[Proposition~6]{BHLLIII} implies that $(J^1_\theta/\ker(\theta))^\Gamma = J^1_K/\ker(\theta_K)$.

\end{proof}

If $y\alpha(u)y^{-1} \not \in J^1_K$, then~$y\alpha(\zeta u) y^{-1} \not \in \bJ(\lambda)$ by lemma~\ref{conjugacylemma}. So we have
\begin{displaymath}
\tr \Pi (y\alpha(\zeta u)y^{-1}) = \epsilon_\theta(\zeta^\alpha)\tr \sigma(\zeta^\alpha)\tr \lambda_K(y\alpha(u)y^{-1})
\end{displaymath}
where the traces of~$\Pi$ and~$\lambda_K$ are extended by zero to~$G$ and~$G_K$ respectively. Since $\tau$ is induced from an extension of~$\lambda_K$ to~$\bJ(\lambda_K)$, we deduce that
\begin{displaymath}
\sum_{y \in \bJ(\lambda_K) \backslash G_K} \tr \lambda_K(y \alpha(u) y^{-1}) = \tr \tau (\alpha(u))
\end{displaymath}
and so
\begin{displaymath}
\tr \pi(\zeta u) =(\delta(\Theta_F)b[\chi] / n)\sum_{\alpha \in \Gal(\bk / \mbf)}\epsilon_\theta(\zeta^\alpha)\tr\sigma(\zeta^\alpha) \tr \tau(\alpha(u)).
\end{displaymath}

Now the Galois twists~$\tau^\alpha = \tau \circ \ad(t_\alpha)$ have character $x \mapsto \tr \tau(\alpha(x))$, and the endo-class~$\cl(\tau^\alpha)$ of~$\tau^\alpha$ satisfies $\cl(\tau^\alpha) = \alpha^*(\cl \tau)$. 
By~\cite[1.5.1]{BHliftingIV}, the group $\Gal(\bk / \mbf)$ is transitive over the $K$-lifts of~$\Theta_F$, and there is the same number of these as simple components of~$K \otimes_F F[\beta]$. 
So the stabilizer of $\cl(\tau)$ in~$\Gal(\bk / \mbf)$ is $\Gal(\bk / \bt)$, and it follows that for $\alpha \in \Gal(\bk / \bt)$ the supercuspidal representations~$\tau$ and~$\tau^\alpha$ of~$G_K$ have the same endo-class. 
By proposition~\ref{conjugateembeddings}, they both contain the simple character~$\theta_K$, so their restriction to~$J^1_K$ contains~$\eta_K$. 
Since $J_K / J_K^ 1 \cong \mu_{K^+} = \mu_K$, which by construction acts trivially on~$\tau$ and~$\tau^\alpha$, these representations contain the same maximal simple type $(J_K, \kappa_K)$. 
So they are inertially equivalent, and their characters therefore agree on elements whose reduced norm has valuation~0, such as~$u$. 
We can now rearrange the sum further to
\begin{displaymath}
\tr \pi(\zeta u) = (\delta(\Theta_F)b[\chi] / n)\epsilon_\theta(\zeta)\sum_{\gamma \in \Gal(\bt / \mbf)}\left ( \tr \tau^\gamma(u)\sum_{\delta \in \gamma\Gal(\bk / \bt)}\tr\sigma(\zeta^\delta) \right )
\end{displaymath}
since $\epsilon_\theta(\zeta)$ only depends on the subgroup of~$\mu_K$ generated by~$\zeta$.

The trace $\tr \sigma(\zeta^\delta)$ can be computed as follows. 
We are evaluating~$\sigma$ at~$\zeta^\delta$ using a fixed choice of isomorphism $\psi: J_\theta/J^1_\theta \to \GL_{m'}(\be_{d'})$ in~$\Psi(\Theta_E)$. 
Recall that any such~$\psi$ comes from an isomorphism $\psi: \fj_\theta / \fj^1_\theta \to M_{m'}(\be_{d'})$ by passing to groups of units. The elliptic maximal torus $\psi(\mu_K)$ is conjugate to~$\be_{m'd'}^\times$, where the trace of~$\sigma = \sigma(\chi)$ is given by explicit formulas, and this  isomorphism $\mu_K \to \be_{m'd'}^\times$ ($\psi$ followed by conjugation) comes from an $\be$-linear isomorphism $\bk \to \be_{m'd'}$ by passing to groups of units. Then one has the character formula
\begin{align*}
\sum_{\delta \in \Gal(\bk / \bt)}\tr \sigma(\zeta^{\delta}) & = (-1)^{m'+1}\sum_{\delta \in \Gal(\bk / \bt)}\sum_{\nu \in \Gal(\be_{m'd'} / \be_{d'})}\chi(\zeta^{\delta\nu})\\
& = (-1)^{m' +1} \sum_{\nu \in \Gal(\be_{m'd'} / \be_{d'})}\sum_{\chi_0 \in [\chi]}\chi_0(\zeta^{ \nu})\\
& = (-1)^{m'+1}m' \sum_{\delta \in \Gal(\bk / \bt)} \chi(\zeta^{ \delta})
\end{align*}
where~$\sigma$ is evaluated on~$\zeta^\delta$ via~$\psi$ and~$\chi$ is evaluated on~$\zeta^\delta$ via any $\be$-linear isomorphism $\iota: \bk \to \be_{m'd'}$. Because the sums are taken over $\Gal(\bk / \bt)$, the answer is independent of the choice of~$\psi$ and~$\iota$, and the second line shows that the answer does not depend on the choice of~$\chi$ in~$[\chi]$.

Now since $n/\delta(\Theta_F) = m'd'$ we have
\begin{displaymath}
(\delta(\Theta_F)b[\chi]/n)m' = b[\chi]/d' = s[\chi]^{-1},
\end{displaymath} 
and rearranging further we obtain
\nopagebreak
\begin{align*}
\tr \pi(\zeta u) & =  (-1)^{m'+1}s[\chi]^{-1}\epsilon_\theta(\zeta)\sum_{\gamma \in \Gal(\bt / \mbf)}\left ( \tr \tau^\gamma(u)\sum_{\delta \in \gamma\Gal(\bk/ \bt)}\chi(\zeta^{\delta}) \right )\\
& =  (-1)^{m'+1}s[\chi]^{-1}\epsilon_\theta(\zeta)\sum_{\gamma \in \Gal(\bk / \mbf)} \tr \tau^\gamma(u) \chi(\zeta^\gamma).
\end{align*}

\end{proof}

\subsection{Results from $\ell$-modular representation theory.}\label{lmodular}
Let $\ell \not = p$ be a prime number, and fix an isomorphism~$\iota : \bC \to \overline{\bQ}_\ell$. 
In~\cite[Section~4.1]{SecherreStevensJL} there is defined a notion of \emph{mod~$\ell$ inertial supercuspidal support} for irreducible smooth $\cbQ_\ell$-representations of~$G = \GL_m(D)$. It is an inertial class of supercuspidal supports for~$\GL_m(D)$ over~$\cbF_\ell$, and it only depends on the inertial class of the representation. Write~$\bi_\ell(\fs)$ for the mod~$\ell$ inertial supercuspidal support of the inertial class~$\fs$, and say that two classes $\fs_1$ and~$\fs_2$ for the category of $\overline{\bQ}_\ell$-representations of~$G$ are in the same \emph{$\ell$-block} if $\bi_\ell(\fs_1) = \bi_\ell(\fs_2)$.

Simple inertial classes~$\fs_i$ of complex representations of~$G$ are said to be \emph{$\ell$-linked} if the inertial classes over $\overline{\bQ}_\ell$ corresponding to them under~$\iota$ are in the same $\ell$-block. 
By \cite[Lemma~5.2]{SecherreStevensJL} this is independent of the choice of~$\iota$.
We say that the~$\fs_i$ are \emph{linked} if there exist prime numbers~$\ell_1, \ldots, \ell_r$ all distinct from~$p$ and inertial classes~$\fs^0,\ldots, \fs^r$ such that~$\fs^0 = \fs_0$, $\fs^r = \fs_1$, and~$\fs^{i-1}$ and~$\fs^i$ are~$\ell_i$-linked throughout.

By~\cite[Lemma~4.3]{SecherreStevensJL}, the mod~$\ell$ inertial supercuspidal support of an integral representation~$\pi$ coincides with the supercuspidal support of every factor of~$\br_\ell(\pi)$. From this and proposition~\ref{modlreduction} it follows that 
\begin{displaymath}
\cl(\bi_\ell(\fs)) = \cl(\fs)\; \text{and} \; \Lambda_{\cbF_\ell}(\bi_\ell(\fs), \br_\ell(\kappa)) = \Lambda_{\cbQ_\ell}(\fs, \kappa)^{(\ell)}.
\end{displaymath}
So, two simple inertial classes~$\fs_i$ are $\ell$-linked if and only if $\cl(\fs_1) = \cl(\fs_2)$ and $\Lambda(\fs_1, \kappa)^{(\ell)} = \Lambda(\fs_2, \kappa)^{(\ell)}$. Letting~$\ell$ vary, we see that the~$\fs_i$ are linked if and only if they have the same endo-class (compare~\cite[Propositions~5.5 and~5.8]{SecherreStevensJL}). The compatibility of the Jacquet--Langlands correspondence with respect to mod~$\ell$ reduction proved in~\cite{MScongruences} then implies the following result.

\begin{thm}\label{JLlinkedreps}\cite[Theorem~6.3]{SecherreStevensJL}
Let~$H = \GL_n(F)$ and consider the Jacquet--Langlands transfer of simple inertial classes of complex representations
\begin{displaymath}
\JL_G: \fB_{\mathrm{ds}}(G) \to  \fB_{\mathrm{ds}}(H)
\end{displaymath}
Let~$\ell \ne p$ be a prime number.
Let~$\fs_i$ be simple inertial classes for~$G$. Then~$\fs_1$ and~$\fs_2$ are $\ell$-linked if and only if~$\JL_G(\fs_1)$ and~$\JL_G(\fs_2)$ are $\ell$-linked.
\end{thm}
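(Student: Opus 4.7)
The plan is to reduce the claim to the compatibility of the Jacquet--Langlands correspondence with reduction modulo~$\ell$, which is the main substantive input. Both directions of the ``if and only if'' will follow from a single symmetric argument, because $\JL_G$ is a bijection and one can play the roles of $G$ and $H$ interchangeably once the compatibility is available.

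First I would choose discrete series representatives $\pi_1, \pi_2$ of the inertial classes $\fs_1, \fs_2$. After twisting by suitable unramified characters, which leaves both the inertial class and the mod-$\ell$ inertial supercuspidal support unchanged and commutes with $\JL_G$, I may assume that all four representations $\pi_i$ and $\JL_G(\pi_i)$ are $\ell$-integral. By lemma~4.3 of~\cite{SecherreStevensJL}, $\bi_\ell(\fs_i)$ coincides with the inertial class of the supercuspidal support of any Jordan--H\"older factor of $\br_\ell(\pi_i)$, and similarly for $\JL_G(\fs_i)$ via $\br_\ell(\JL_G(\pi_i))$. So the statement becomes an equality of supercuspidal supports of mod-$\ell$ reductions on the $G$- and $H$-sides.

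Next I would invoke the compatibility of the Jacquet--Langlands transfer with reduction modulo~$\ell$: for every $\ell$-integral discrete series $\pi$ of $G$, the Jordan--H\"older factors of $\br_\ell(\pi)$ and those of $\br_\ell(\JL_G(\pi))$ correspond under a mod-$\ell$ Jacquet--Langlands transfer, a bijection between appropriate families of irreducible $\cbF_\ell$-representations of $G$ and $H$ that preserves supercuspidal supports up to inertial equivalence. Feeding this into the previous paragraph yields $\bi_\ell(\fs_1) = \bi_\ell(\fs_2)$ if and only if $\bi_\ell(\JL_G(\fs_1)) = \bi_\ell(\JL_G(\fs_2))$, which is precisely the $\ell$-linking equivalence.

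The main obstacle is the compatibility statement itself. The classical $\JL$ is characterised by a character identity valid in characteristic zero, and $\br_\ell(\pi)$ of a discrete series can be reducible with Jordan--H\"older factors of several shapes (including non-cuspidal, Speh-type constituents). One cannot simply reduce the character identity; instead one has to argue that even in the presence of such reducibility, the inertial classes of the supercuspidal supports of the factors are matched across $G$ and $H$ by a well-defined modular transfer. This requires the full strength of the construction of the mod-$\ell$ Jacquet--Langlands correspondence and its compatibility with $\ell$-adic reduction (due to Dat, M\'inguez--S\'echerre and used here via~\cite{SecherreStevensJL}), and is the real content of the theorem.
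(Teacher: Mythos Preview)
The paper does not give its own proof of this theorem: it is stated as a citation of Corollary~6.3 and Theorem~6.4 in~\cite{SecherreStevensJL}, with no argument supplied. So there is nothing in the paper to compare your proposal against line by line.

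That said, your outline is an accurate description of the strategy in~\cite{SecherreStevensJL}. You correctly reduce the statement to the compatibility of the Jacquet--Langlands correspondence with reduction modulo~$\ell$, and you correctly identify this compatibility as the substantive content rather than something to be proved here. The reduction you sketch---choose integral discrete series representatives, use lemma~4.3 of~\cite{SecherreStevensJL} to identify $\bi_\ell$ with the supercuspidal support of factors of the reduction, then apply the modular Jacquet--Langlands transfer---is exactly the shape of the argument in the cited reference. Your caveat in the last paragraph is also well placed: the modular correspondence is not obtained by reducing the character identity, and its construction and compatibility with $\ell$-adic reduction are genuinely the hard inputs.

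One small point: in your second paragraph you assert that the mod-$\ell$ Jacquet--Langlands transfer ``preserves supercuspidal supports up to inertial equivalence.'' This is true but is itself part of the package established in~\cite{SecherreStevensJL} (and M\'inguez--S\'echerre), not something obvious from the definition of the transfer; you should flag it as an input on the same footing as the compatibility with reduction, rather than as a formal consequence.
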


\subsection{Proof of the main theorems.}
Consider central simple algebras $A_1 = M_n(F)$ and $A_2 = M_m(D)$ over~$F$, such that~$md = n$.
Write $\JL_{G_2}: \mathbf{D}(G_2) \to \mathbf{D}(G_1)$ for the Jacquet--Langlands correspondence between their unit groups $G_i = A_i^\times$, as well as for the map it induces on simple inertial classes. Let~$\fs_i$ be simple inertial classes of the~$G_i$ with
\begin{displaymath}
\fs_1 = \JL_{G_2}(\fs_2).
\end{displaymath}

\begin{thm}\label{endo-classinvariance}
We have the equality $\cl(\fs_1) = \cl(\fs_2)$.
\end{thm}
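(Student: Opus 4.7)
The strategy is to reduce the statement to the split case by passing to an unramified extension of degree $d$ and applying the character formula of Theorem~\ref{characterformula}. By the $\ell$-linking formalism of Section~\ref{lmodular} and Theorem~\ref{JLlinkedreps}, we may assume without loss of generality that $\fs_G$ is a supercuspidal inertial class, since within each inner form two simple inertial classes have the same endo-class if and only if they are linked, and $\JL_G$ preserves linking for every prime $\ell \neq p$. Consequently it suffices to exhibit, for each endo-class $\Theta_F$ on the $G$-side, one supercuspidal $\fs_G$ of endo-class $\Theta_F$ whose Jacquet--Langlands transfer also has endo-class $\Theta_F$.

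Fix then a supercuspidal $\pi_G \in \fs_G$ with $\cl(\pi_G) = \Theta_F^G$ and set $\pi_H = \JL(\pi_G)$, whose endo-class we call $\Theta_F^H$. Choose a simple stratum $[\fA, \beta]$ defining a maximal simple character $\theta$ contained in $\pi_G$, and fix a maximal unramified extension $K/F$ of degree $d$ inside $A = M_m(D)$ normalizing~$\fA$, as constructed in the proof of Proposition~\ref{conjugateembeddings}. Since $[K:F] = d$, the field $K$ splits $D$ and $G_K = Z_G(K) \cong \GL_m(K)$ is the split group; choosing a compatible embedding of $K$ in $H = \GL_n(F)$ also makes $H_K \cong \GL_m(K)$. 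Proposition~\ref{interiorlifting} produces an interior $K$-lift $\theta_K$ of $\theta$ in $G_K$, from which one builds a supercuspidal $K$-lift $\tau_G$ of $\pi_G$ in $G_K$, and analogously a $K$-lift $\tau_H$ of (the supercuspidal support of) $\pi_H$ in $H_K$.

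Now apply Theorem~\ref{characterformula} to $\pi_G$ at $\zeta u$ with $\zeta \in \mu_K$ generating $K$ over $F$ and $u$ regular elliptic pro-unipotent in $G_K$: this expresses $\tr\pi_G(\zeta u)$ as an explicit signed sum over Galois twists of $\tau_G$. Combine this with the Jacquet--Langlands identity $(-1)^m \tr\pi_G(\zeta u) = (-1)^n \tr\pi_H(\zeta u)$ and an analogous character expansion of $\tr\pi_H(\zeta u)$, obtained from Theorem~\ref{characterformula} when $\pi_H$ is supercuspidal or via a Steinberg-type expansion in terms of the $K$-lift of its supercuspidal support on the split group $H_K = \GL_m(K)$. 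Linear independence of supercuspidal characters of $\GL_m(K)$ on the regular elliptic locus then forces $\tau_G$ to coincide, up to a $\Gal(K/F)$-twist, with the $K$-lift attached to $\pi_H$. Hence $\cl(\tau_G)$ and $\cl(\tau_H)$ lie in a common $\Gal(K/F)$-orbit inside $\mathcal{E}(K)$, and applying $\Res_{K/F}$ (which is $\Gal(K/F)$-invariant on $K$-lifts of a given endo-class by Proposition~\ref{urparameterlift}) yields $\Theta_F^G = \Theta_F^H$.

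The hard part will be the coherent alignment of all these choices: one must simultaneously embed $K$ in both $A_G = M_m(D)$ and $A_H = M_n(F)$, choose compatible parameter fields and $K$-lifts on the two sides, and identify matching regular elliptic conjugacy classes so that the two character expansions genuinely live in the \emph{same} split group $\GL_m(K)$ and can be compared term-by-term. This is precisely the ``new technique to reduce to the split case'' announced in the introduction; once it is in place, the remainder of the argument is a direct application of linear independence of characters together with the Galois-descent formalism for endo-classes over unramified parameter fields.
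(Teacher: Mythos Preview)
Your outline has the right shape---reduce to supercuspidals via linking, then compare character formulas over an unramified extension and invoke linear independence---but there are two genuine gaps where your version would not go through.

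First, the field~$K$ in Theorem~\ref{characterformula} is \emph{not} an arbitrary unramified extension of degree~$d$. It is the maximal unramified subextension of $L/F$, where~$L$ is a maximal unramified extension of~$F[\beta]$ inside $Z_A(F[\beta])$ normalizing~$\fA$; one computes $[K:F] = n/e(\Theta_F)$, which need not be divisible by~$d$ and hence need not split~$D$. The character formula is only established for this specific~$K$, so you cannot simply declare $[K:F]=d$. The paper's fix is to first pass from $\GL_n(F)$, $\GL_m(D)$ to $\GL_{dn}(F)$, $\GL_{dm}(D)$ by taking $d$-fold supercuspidal supports; this forces $d \mid n/\delta(\Theta_F)$ and therefore $d \mid [K:F]$, so that $Z_{A_2}(K)$ becomes split and can be identified with $Z_{A_1}(K)$. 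This replacement is precisely the ``new technique to reduce to the split case'' alluded to in the introduction, and your proposal omits it.

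Second, your supercuspidal reduction runs in the wrong direction. Starting with $\pi_G$ supercuspidal on $\GL_m(D)$ does \emph{not} make $\pi_H = \JL(\pi_G)$ supercuspidal on $\GL_n(F)$; it is typically a generalized Steinberg, and Theorem~\ref{characterformula} applies only to supercuspidal~$\pi$. Your ``Steinberg-type expansion'' is not available in the paper and would require separate work. The paper instead uses that a simple inertial class of $\GL_n(F)$ is supercuspidal if and only if it has maximal parametric degree, and that parametric degree is Jacquet--Langlands invariant: hence one starts with a supercuspidal $\fs_1^+$ on the \emph{split} side linked to $\fs_1$, whose transfer to $\GL_m(D)$ is then automatically supercuspidal, and the character formula applies on both sides without further analysis.
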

\begin{proof}
Let~$d$ be the reduced degree of~$D$ over~$F$. For all integers $a \geq 1$ there exist simple inertial classes~$\fs_i^*$ in~$\GL_{an}(F)$ and~$\GL_{am}(D)$ which correspond to each other under the Jacquet--Langlands transfer on these groups and have endo-class~$\cl(\fs_i^*) = \cl(\fs_i)$: it suffices to let their supercuspidal support be a multiple of the supercuspidal support of the~$\fs_i$. Letting~$a = d$, we can assume that~$d$ divides $\frac{n}{\delta(\cl \fs_i)}$ for all~$i = 1,2$. 

We can assume that both~$\fs_i$ are supercuspidal: to see this, recall that the parametric degree of a simple inertial class is preserved under the Jacquet--Langlands correspondence~\cite[2.8 Corollary~1]{BHJL}. 
Since a simple inertial class of~$\GL_n(F)$ is supercuspidal if and only if it has maximal parametric degree, we see that the transfer of a supercuspidal representation of~$\GL_n(F)$ is supercuspidal. Let~$\fs_1^+$ be a supercuspidal inertial class for~$\GL_n(F)$, with $\cl(\fs_1) = \cl(\fs_1^+)$. Then $\fs_1$ and~$\fs_1^+$ are linked, hence by theorem~\ref{JLlinkedreps} also~$\fs_2$ and~$\JL_{G_2}^{-1}(\fs_1^+)$ are linked, and so they have the same endo-class. So if $\cl(\JL_{G_2}^{-1}(\fs_1^+)) = \cl(\fs_1^+)$ then the theorem follows. 

When the~$\fs_i$ are supercuspidal, their parametric degree is maximal and by the formulas in remark~\ref{invariants} the invariants~$s[\chi_i]$ are equal to~$1$, where $[\chi_i] = \Lambda_{\kappa}(\fs_i)$. Fix maximal simple characters~$\theta_i$ in~$G_i$ of endo-class~$\cl(\fs_i)$, with underlying simple strata $[\fA_i, \beta_i]$. Let $T_i = F[\beta_i]^{\mathrm{ur}}$. Let~$K_i^+$ be an extension of~$F[\beta_i]$ contained in~$Z_{A_i}(F[\beta_i])$  which has maximal degree, is unramified, and normalizes~$\fA_i$, as in the proof of proposition~\ref{conjugateembeddings}. Let~$K_i$ be the maximal unramified extension of~$F$ contained in~$K_i^+$. The quantity~$t = \frac{n}{\delta(\cl \fs_i)}f(F[\beta_i]/F)$ is preserved under the Jacquet--Langlands correspondence (it is the torsion number of the inertial class, by the formulas in remark~\ref{invariants}, which is preserved since the Jacquet--Langlands correspondence commutes with twists by unramified characters), and the~$K_i$ have the same degree~$t$ over~$F$.

Because~$d$ divides~$\frac{n}{\delta(\cl \fs_i)}$, it divides $[K_i:F]$, hence the commutant~$Z_{A_2}(K_2)$ is a split central simple algebra over~$K_2$: indeed, we have~$Z_{A_2}(K_2) \cong M_{m'}(D')$ for a central division algebra~$D'$ of reduced degree $d/(d, [K_2:F]) = 1$ over~$K_2$. We can therefore fix an $F$-linear isomorphism $\alpha_0: K_2 \to K_1$, and then an $\alpha_0$-linear isomorphism $\alpha: Z_{A_2}(K_2) \to Z_{A_1}(K_1)$. We emphasize that these isomorphisms are chosen arbitrarily; this ambiguity will not affect our results.  Using~$\alpha$ and~$\alpha_0$ to identify~$K_1$ and~$K_2$, and~$Z_{A_1}(K_1)$ and~$Z_{A_2}(K_2)$, we can write~$K$ for any of the~$K_i$ and~$A_K$ for any of the~$Z_{A_i}(K_i)$\footnote{Formally, $K$ is an inverse limit of the diagram $\alpha_0: K_2 \to K_1$, and similarly for~$A_K$ and~$\alpha: Z_{A_2}(K_2) \to Z_{A_1}(K_1)$.}.

Choose supercuspidal irreducible representations~$\pi_i$ in the inertial classes~$\fs_i$, corresponding to each other under the Jacquet--Langlands correspondence. Let~$\tau_i$ be some $K$-lift of~$\pi_i$. Choose $\zeta \in \mu_K$ generating~$K$ over~$F$, and an elliptic, regular and pro-unipotent element~$u$ of~$G_K = A_K^\times$. The $\zeta u$ are matching elements of~$A_1$ and~$A_2$, and by proposition~\ref{characterformula} and its proof, we have equalities
\begin{displaymath}
\tr \pi_i(\zeta u) = (-1)^{m_i'+1}\epsilon_{\theta_i}(\zeta)\sum_{\gamma \in \Gal(\bt_i / \mbf)}\left ( \tr \tau_i^\gamma(u)\sum_{\delta \in \gamma\Gal(\bk / \bt_i)}\chi_i (\zeta^{\delta}) \right ).
\end{displaymath}

By~\cite[1.5.1]{BHliftingIV}, the group $\Gal(\bk / \mbf)$ is transitive on the set of~$K$-lifts of~$\cl(\fs_i)$, which has $f(\cl\, \fs_i)$ many elements. Since $\cl(\tau_i^\gamma) = \gamma^*\cl(\tau_i)$, the representations~$\tau_i^\gamma$ as~$\gamma$ runs through $\Gal(\bt_i/\mbf)$ are pairwise inertially inequivalent (as they have different endo-classes). They are furthermore \emph{totally ramified} representations of~$G_K$, in the sense that their unramified parameter fields all coincide with~$K$.

\begin{lemma}[Linear independence lemma]
Let~$\pi_1, \ldots, \pi_r$ be irreducible, supercuspidal, totally ramified representations of~$\GL_m(D)$ for a central division algebra~$D$ over~$F$, whose central characters agree on~$\mu_F$. Assume that they are pairwise inertially inequivalent. Then the characters~$\tr \pi_i$ are linearly independent on the set of elliptic, regular, pro-unipotent elements of~$\GL_m(D)$.
\end{lemma}
\begin{proof}
This follows from~\cite[Lemma~6.6]{BHJL}, as we can twist the~$\pi_i$ by unramified characters of~$\GL_m(D)$ until the central characters also agree on a uniformizer of~$F$. This does not change the inertial classes of the~$\pi_i$, nor the character values on elliptic, regular, pro-unipotent elements of~$\GL_m(D)$ as these have reduced norms of valuation~0.
\end{proof}

The central characters of the~$\tau_i$ are trivial on~$\mu_K$ by construction. 
Then, by the linear independence lemma, either there exists~$\gamma \in \Gal(K/F)$ such that~$\tau_1^\gamma$ and~$\tau_2$ are inertially equivalent,  or 

\begin{displaymath}
\sum_{\delta \in \gamma\Gal(\bk / \bt_i)}\chi_i(\zeta^{\delta}) = 0
\end{displaymath}
for all values of~$i$, $\gamma$ and~$\zeta$. That this does not happen follows when $i = 1$ by~\cite[Theorem~1.1]{SZcharacters}, stating that there exists no character~$\chi$ of~$\bk^\times$ such that $\sum_{\gamma \in \Gal(\bk / \mbf)}\chi(\zeta^\gamma) = 0$ for all $\mbf$-regular elements of~$\bk$.

So we have proved that~$\tau_1$ and~$\tau_2^\gamma$ are inertially equivalent for some~$\gamma \in \Gal(K/F)$. But then they have the same endo-class, and since their endo-classes are~$K$-lifts of~$\cl(\fs_1)$ and~$\cl(\fs_2)$ respectively, the theorem follows.
\end{proof}

We pass now to the study of the level zero part of the~$\fs_i$. Let us first assume that the~$\fs_i$ are supercuspidal. Choose maximal simple characters~$\theta_i$ contained in the~$\fs_i$, defined by strata $[\fA_i, \beta_i]$, and let $T_i = F[\beta_i]^{\mathrm{ur}}$. As in the proof of proposition~\ref{conjugateembeddings}, we take a maximal unramified extension~$K_i^+$ of~$F[\beta_i]$ in~$Z_{A_i}(F[\beta_i])$ normalizing~$\fA_i$, and we identify the maximal unramified extensions~$K_i$ of~$F$ in~$K_i^+$. 
More precisely, we know that the~$\theta_i$ have the same endo-class, and we take the only $F$-linear ring isomorphism $\alpha_0: K_2 \to K_1$ such that
\begin{displaymath}
\alpha_0^*\cl(\theta_{1, K_1}) = \cl(\theta_{2, K_2})
\end{displaymath}
for the interior lifts~$\theta_{i, K_i}$. Notice, however, that the commutants $Z_{A_i}(K_i)$ need not be isomorphic. As in the proof of theorem~\ref{endo-classinvariance}, we write~$K$ for any of~$K_2$ and~$K_1$ and~$T$ for any of~$T_2$ and~$T_1$, using the isomorphism~$\alpha_0$.

\begin{thm}\label{comparecharacters}
The equality $\epsilon^1_{\theta_1}\Lambda_\kappa(\fs_1) = \epsilon^1_{\theta_2}\Lambda_\kappa(\fs_2)$ holds, where~$\epsilon^1_{\theta_i} =  \epsilon^1_{\mu_K}(-, V_{\theta_i})$ are the symplectic sign characters, and~$\kappa$ is the conjugacy class of $p$-primary $\beta$-extensions.
\end{thm}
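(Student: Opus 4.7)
The plan is to compare the two sides of the Jacquet--Langlands character identity using Theorem~\ref{characterformula} and then apply linear independence of characters, in analogy with the proof of Theorem~\ref{endo-classinvariance}. Choose matching $\pi_1 \in \fs_1$ and $\pi_2 \in \fs_2$, maximal simple characters $\theta_i \in G_i$ of the common endo-class $\Theta_F = \cl(\fs_1) = \cl(\fs_2)$, together with compatible $K$-lifts $\tau_i$. For any generator $\zeta \in \mu_K$ of $K$ over $F$ and any elliptic, regular, pro-unipotent $u \in G_K$, the Jacquet--Langlands identity $(-1)^m\tr\pi_2(\zeta u) = (-1)^n\tr\pi_1(\zeta u)$ together with Theorem~\ref{characterformula}, using $s[\chi_i]=1$ in the supercuspidal case, gives
\[
(-1)^{m+m_2'+1}\epsilon(\zeta, V_2)\sum_{\alpha \in \Gal(\bk/\mbf)}\chi_2(\zeta^\alpha)\tr\tau_2^\alpha(u) = (-1)^{n+m_1'+1}\epsilon(\zeta, V_1)\sum_{\alpha \in \Gal(\bk/\mbf)}\chi_1(\zeta^\alpha)\tr\tau_1^\alpha(u).
\]

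From the proof of Theorem~\ref{endo-classinvariance} we already know that there exists $\gamma_0 \in \Gal(\bk/\mbf)$ such that $\tau_1$ is inertially equivalent to $\tau_2^{\gamma_0^{-1}}$. Since unramified characters are trivial on pro-unipotent elements, on $u$ we have $\tr\tau_2^\alpha(u) = \tr\tau_1^{\gamma_0^{-1}\alpha}(u)$. After the change of variables $\alpha \leftrightarrow \gamma_0\alpha$ and replacing the representative $\chi_2$ of its orbit by $\chi_2 \circ \gamma_0$, both sides become sums indexed by the same family $\{\tr\tau_1^\alpha(u)\}_{\alpha}$. The representatives $\tau_1^\alpha$ for $\alpha$ in distinct cosets of $\Gal(\bk/\bt)$ are pairwise inertially inequivalent, because their endo-classes are distinct $K$-lifts of $\Theta_F$. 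The linear independence lemma from the proof of Theorem~\ref{endo-classinvariance} therefore applies to extract, for every $\zeta$ and every $\Gal(\bk/\bt)$-coset of indices, matching coefficients on the two sides.

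Summing these coset identities over all cosets of $\Gal(\bk/\bt)$ in $\Gal(\bk/\mbf)$ yields a full Galois sum. Writing $\epsilon(\zeta, V_i) = \epsilon^0_{\mu_K}(V_i)\epsilon^1_{\mu_K}(V_i)(\zeta)$ and using that $\epsilon^1_{\mu_K}(V_i)$ is $\Gal(\bk/\mbf)$-stable (because $\epsilon(\zeta, V_i)$ depends only on the subgroup generated by $\zeta$, as observed in the proof of Theorem~\ref{characterformula}, and is hence constant on the Galois orbit of any generator of $\mu_K$), we may absorb $\epsilon^1_{\mu_K}(V_i)$ inside the Galois sum to obtain
\[
(-1)^{m+m_2'+1}\epsilon^0_{\mu_K}(V_2)\sum_{\alpha}(\epsilon^1_{\mu_K}(V_2)\chi_2)(\zeta^\alpha) = (-1)^{n+m_1'+1}\epsilon^0_{\mu_K}(V_1)\sum_{\alpha}(\epsilon^1_{\mu_K}(V_1)\chi_1)(\zeta^\alpha)
\]
for every $\mbf$-regular $\zeta \in \bk^\times$. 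Once I check that the two global sign constants agree, Theorem~1.1 of~\cite{SZcharacters} (applied exactly as in the proof of Theorem~\ref{endo-classinvariance}) forces the two $\Gal(\bk/\mbf)$-orbits of $\epsilon^1_{\mu_K}(V_i)\chi_i$ to coincide, which is the claim.

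The main obstacle is the sign reconciliation: proving that
\[
(-1)^{m+m_2'+1}\epsilon^0_{\mu_K}(V_2) = (-1)^{n+m_1'+1}\epsilon^0_{\mu_K}(V_1).
\]
This is a computation comparing the $\epsilon^0$-invariants of the two symplectic $\mu_K$-modules $V_i = J^1_{\theta_i}/H^1_{\theta_i}$ against the parity shift $(-1)^{n-m+m_1'-m_2'}$ arising from the different matrix sizes. Once the orbit identity $[\epsilon^1_{\mu_K}(V_1)\chi_1] = [\epsilon^1_{\mu_K}(V_2)\chi_2]$ is established for supercuspidal $\fs_i$, the general (non-supercuspidal) case follows from the compatibility of level zero maps with parabolic induction, as in section~8 of~\cite{SecherreStevensJL}.
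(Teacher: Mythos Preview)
There are two genuine gaps in your argument.

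\textbf{The groups $G_{K,1}$ and $G_{K,2}$ need not coincide.} In the setup preceding the theorem the paper explicitly warns that the commutants $Z_{A_i}(K_i)$ need not be isomorphic, so the $K$-lifts $\tau_1$ and $\tau_2$ are representations of \emph{different} groups. Your equality $\tr\tau_2^\alpha(u) = \tr\tau_1^{\gamma_0^{-1}\alpha}(u)$ and the ensuing linear-independence argument tacitly place both on a single $G_K$; this was available in the proof of Theorem~\ref{endo-classinvariance} only after passing to $\GL_{an}$, which destroys supercuspidality. The paper instead takes matching pro-unipotent elements $u_1\in G_{K,1}$ and $u_2\in G_{K,2}$ and compares $\tau_1(u_1)$ with $\tau_2(u_2)$ via the Jacquet--Langlands correspondence over~$K$, after showing that $\JL_{G_{K,2}}(\tau_2)$ and $\tau_1$ are inertially equivalent (here Theorem~\ref{endo-classinvariance}, now already proved, is applied to the groups $G_{K,i}$). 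This introduces an extra sign $(-1)^{n_K+m_K}$ into the comparison. In particular your ``obstacle'' identity $(-1)^{m+m_2'+1}\epsilon^0_{\mu_K}(V_2)=(-1)^{n+m_1'+1}\epsilon^0_{\mu_K}(V_1)$ is simply false in general: the correct identity carries the factor $(-1)^{n_K+m_K}$.

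\textbf{The sign is not proved separately; it falls out of a uniqueness argument.} You leave the sign reconciliation as an unfinished computation and then plan to invoke Theorem~1.1 of~\cite{SZcharacters} to deduce that the $\Gal(\bk/\mbf)$-orbits of $\epsilon^1_{\mu_K}(V_i)\chi_i$ agree. Neither step matches what is needed. First, the level zero invariants are $\Gal(\bk/\bt)$-orbits, not $\Gal(\bk/\mbf)$-orbits, so by summing your coset identities back over $\Gal(\bt/\mbf)$ you discard exactly the information required; the paper keeps the single coset $\gamma=1$ (made possible by the specific choice of $\alpha_0$) and works with $\Gal(\bk/\bt)$-sums throughout. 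Second, the cited result from~\cite{SZcharacters} only asserts nonvanishing of a Galois sum on regular elements, not that such a sum determines the orbit. What the paper uses is that the left-hand side of the resulting identity, namely $(-1)^{n'+1}\sum_{\delta\in\Gal(\bk/\bt)}(\epsilon^1_{\mu_K}(V_1)\chi_1)(\zeta^\delta)$, is the trace on $\bt$-regular elements of a supercuspidal representation of $\GL_{n/\delta(\Theta_F)}(\bt)$ (because $\chi_1$ is $\be$-regular), and then~2.3 Corollary of~\cite{BHLLIII}: letting $\chi_1$ vary, this uniqueness statement forces \emph{both} the orbit equality $[\epsilon^1_{\mu_K}(V_1)\chi_1]=[\epsilon^1_{\mu_K}(V_2)\chi_2]$ and the sign equality simultaneously. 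No independent sign computation is needed.

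Finally, note that the theorem as stated is already restricted to supercuspidal $\fs_i$; the extension to the general case is the separate Theorem~\ref{comparecharactersingeneral}.
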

\begin{proof}
We choose supercuspidal representations~$\pi_i$ of~$G_i$ in~$\fs_i$, Jacquet--Langlands transfers of each other, and we let~$\tau_i$ be some $K$-lift of~$\pi_i$; then, because of our choice of~$\alpha_0$, $\tau_1$ and~$\tau_2$ have the same endo-class.

Fix a root of unity~$\zeta \in \mu_K$ generating~$K$ over~$F$, and let~$u_1$ be an elliptic, regular, pro-unipotent element of~$G_{K, 1} = Z_{G_1}(K)$. The matching conjugacy class in~$G_{K, 2}$ then consists of pro-unipotent elements, as in the case of an elliptic regular element this is a condition which can be checked on the eigenvalues of the characteristic polynomial. Let~$u_2$ be an element of this conjugacy class. We apply proposition~\ref{characterformula} and obtain an equality 
\begin{displaymath}
\tr \pi_i(\zeta u_i) = (-1)^{m'_i+1}\epsilon_{\theta_i}(\zeta)\sum_{\gamma \in \Gal(\bt / \mbf)}\left ( \tr \tau_i^\gamma(u_i)\sum_{\delta \in \gamma\Gal(\bk / \bt)}\chi_i(\zeta^{\delta}) \right )
\end{displaymath}
where~$\Lambda_{\kappa}(\fs_i) = [\chi_i]$. By the linear independence lemma, we have that~$\tau_1^\gamma$ and~$\JL_{G_{K, 2}}(\tau_2)$ are inertially equivalent for some~$\gamma \in \Gal(\bt/ \mbf)$ (this is the Jacquet--Langlands correspondence for the groups~$G_{K, i}$, which are inner forms of each other). This~$\gamma$ is unique, as the~$\tau_i^\gamma$ have pairwise different endo-classes for~$\gamma \in \Gal(\bt / \mbf)$. By theorem~\ref{endo-classinvariance}, the endo-class of~$\JL_{G_{K, 2}}(\tau_2)$ is~$\cl(\theta_{2, K_2})$. By our choice of~$\alpha_0$, this implies~$\gamma = 1$. 

Fix~$u_i$ so that the characters of the~$\tau_i$ are nonzero at $u_i$; this is possible by the linear independence lemma, because the~$\tau_i$ are totally ramified. Then the Jacquet--Langlands character relation
\begin{displaymath}
(-1)^{n_K}\tr \tau_1(u_1) = (-1)^{m_K}\tr \tau_2(u_2)
\end{displaymath}
holds, where the signs are defined by the existence of isomorphisms $Z_{A_1}(K_1) \cong M_{n_K}(K)$ and $Z_{A_2}(K_2) \cong M_{m_K}(D_K)$ for some central division algebra~$D_K$ over~$K$.

We now deduce an equality
\begin{equation}\label{comparecharacters1}
(-1)^{m+m'+m_K+1}\epsilon_{\theta_2}(\zeta)\sum_{\delta \in \Gal(\bk / \bt)}\chi_2(\zeta^\delta) = (-1)^{n+n'+n_K+1}\epsilon_{\theta_1}(\zeta)\sum_{\delta \in \Gal(\bk / \bt)}\chi_1(\zeta^\delta)
\end{equation}
on comparing $\tr \pi_1(\zeta u_1)$ and $\tr \pi_2(\zeta u_2)$ by the Jacquet--Langlands correspondences over~$F$ and over~$K$. This equality holds for all~$\zeta \in \mu_K$ generating~$K$ over~$F$, or equivalently for all~$\zeta \in \bk^\times$ generating~$\bk$ over~$\mbf$. (To be more precise, recall that~$K$ is an inverse limit of the diagram $\alpha_0: K_2 \to K_1$. 
We are evaluating~$\chi_i$ at $\zeta^\delta \in \mu_{K_i}$ via a choice of $\be$-linear isomorphism $\iota_i: \bk_i \to \be_{n/\delta(\Theta_F)}$, as in theorem~\ref{characterformula}. 
Since $\alpha_0^*\cl(\theta_{1, K_1}) = \theta_{2, K_2}$, we have $\alpha_0\iota_{T_2} = \iota_{T_1}$, hence the~$\iota_i$ can be chosen compatibly with $\alpha_0 : \bk_2 \to \bk_1$, allowing us to evaluate $\chi_i$ to $\zeta^\delta \in \mu_K$.)

Now recall from our discussion of symplectic invariants that (writing~$V_i = V_{\theta_i}$) there exist a sign~$\epsilon^0_{\mu_K}(V_i)$ and a quadratic character~$\epsilon^1_{\mu_K}(z, V_i)$ of~$\mu_K$ such that, whenever $z \in \mu_K$ generates a subgroup~$\Delta$ of~$\mu_K$ with~$V_i^{\mu_K} = V_i^\Delta$, one has
\begin{displaymath}
\epsilon_{\theta_i}(z) = \epsilon^0_{\mu_K}(V_i)\epsilon^1_{\mu_K}(z, V_i).
\end{displaymath}
In our case, every~$\zeta$ generating~$K$ over~$F$ satisfies~$V_i^\zeta = V_i^{\mu_K}$ even if~$\zeta$ does not generate~$\mu_K$, by the argument of~\cite[Proposition~6]{BHLLIII}. 
Comparing coefficients, one gets an equality
\begin{displaymath}
(-1)^{n+n'+n_K+1}\epsilon^0_{\mu_K}(V_1)\sum_{\delta \in \Gal(\bk / \bt)}\epsilon^1_{\mu_K}(\zeta^\delta, V_1)\chi_1(\zeta^\delta) = (-1)^{m+m'+m_K+1}\epsilon^0_{\mu_K}(V_2)\sum_{\delta \in \Gal(\bk / \bt)}\epsilon^1_{\mu_K}(\zeta^\delta, V_2)\chi_2(\zeta^\delta)
\end{displaymath}
which we rewrite as
\begin{equation}\label{comparecharacters2}
(-1)^{n+n_K+m+m'+m_K+1}\epsilon^0_{\mu_K}(V_1)\epsilon^0_{\mu_K}(V_2)\sum_{\delta \in \Gal(\bk / \bt)}\epsilon^1_{\mu_K}(\zeta^\delta, V_1)\chi_1(\zeta^\delta) = (-1)^{n'+1}\sum_{\delta \in \Gal(\bk / \bt)}\epsilon^1_{\mu_K}(\zeta^\delta, V_2)\chi_2(\zeta^\delta).
\end{equation}
This equation stays true if $\zeta$ varies over all generators of the extension~$\bk / \mbf$.
Since $\fs_1$ is a supercuspidal inertial class for~$\GL_n(F)$, the character~$\chi_1$ is $\be$-regular. 
For any $\be$-regular character of~$\be_{n/\delta(\Theta_F)}^\times$ there exists a $\be$-regular character~$\chi_2$ of the same group so that
\[
\fs_{G_1}(\Theta_F, \Theta_E, [\chi_1]) = \JL(\fs_{G_2}(\Theta_F, \Theta_E, [\chi_2])).
\] 
This follows from theorem~\ref{endo-classinvariance} and the invariance of parametric degrees under the Jacquet--Langlands correspondence.
(Recall that~$\fs_{G_i}(\Theta_F, \Theta_E, [\chi_i])$ is the simple inertial class of~$G_i$ with invariants $(\Theta_F, [\chi_i])$.)
Then equation~\ref{comparecharacters2} continues to stay true for all such pairs~$(\chi_1, \chi_2)$. 

At the right hand side of~\ref{comparecharacters2}, one has the trace at~$\zeta$ of the supercuspidal irreducible representation $\sigma[\epsilon^1_{\mu_K}(-, V_2)\chi_2]$ of~$\GL_{n/\delta(\Theta_F)}(\bt)$. 
By~\cite[Corollary~1]{BHLLIII} we deduce that
\footnote{We could not apply this directly to equation~\ref{comparecharacters1} to deduce
\begin{align*}
 [\chi_1] & = [\chi_2]\\
(-1)^{n+n'+n_K+1}\epsilon_{\theta_1}(\zeta) & = (-1)^{m+m'+m_K+1}\epsilon_{\theta_2}(\zeta)
\end{align*}
because the sign~$\epsilon_{\theta_i}(\zeta)$ may not be constant on the~$\zeta$ that generate~$K$ over~$F$, since these may generate proper subgroups of~$\mu_K$.}
\begin{gather*}
(-1)^{n+n_K+m+m'+m_K+1}\epsilon^0_{\mu_K}(V_1)\epsilon^0_{\mu_K}(V_2) = (-1)^{n'+1} \text{ and }\\
\sigma [\epsilon^1_{\mu_K}(-, V_1)\chi_1]  \cong \sigma[\epsilon^1_{\mu_K}(-, V_2)\chi_2],
\end{gather*}
and the theorem follows.
\end{proof}

It follows from theorem~\ref{comparecharacters} that, twisting the $p$-primary $\beta$-extension by the symplectic sign character (a quadratic character), we obtain conjugacy classes~$\kappa_i$ of $\beta$-extensions in~$G_i$, of endo-class~$\Theta_F$, such that $\Lambda_{\kappa_1}(\fs_1) = \Lambda_{\kappa_2}(\fs_2)$ whenever the~$\fs_i$ are supercuspidal inertial classes and Jacquet--Langlands transfers of each other. Notice also that by~\cite[6.9]{BHJL}, the sign~$\epsilon^0$ and the character~$\epsilon^1$ determine each other: $\epsilon^1$ is the nontrivial quadratic character if and only if~$p$ is odd and~$\epsilon^0 = -1$. It follows that the quadratic character $\epsilon^1_{\mu_K}(V_1)\epsilon^1_{\mu_K}(V_2)$ is nontrivial if and only if~$p$ is odd and~$n+n'+n_K+m+m'+m_K$ is odd.

\begin{thm}\label{comparecharactersingeneral}
With the notation of the previous paragraph, the equality $\Lambda_{\kappa_1}(\fs_1) = \Lambda_{\kappa_2}(\fs_2)$ also holds for non-cuspidal $\fs_i$.
\end{thm}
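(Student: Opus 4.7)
The plan is to reduce to the supercuspidal case (Theorem~\ref{comparecharacters}) by combining compatibility of the level zero map with parabolic induction (Section~\ref{levelzeromaps}) and the Badulescu--Zelevinsky description of the Jacquet--Langlands correspondence on supercuspidal supports. Concretely, if $\fs_i$ has supercuspidal support a power of a supercuspidal inertial class $\fs_{0,i}$ in a Levi $\GL_{m_i/r_i}(D_i) \subseteq G_i$, then by Section~\ref{levelzeromaps} there is a $\beta$-extension~$\widetilde{\kappa}_i$ in the Levi, compatible with~$\kappa_i$, such that $\Lambda_{\kappa_i}(\fs_i)$ is the norm-inflation of $\Lambda_{\widetilde{\kappa}_i}(\fs_{0,i})$ to $\be_{n/\delta(\Theta_F)}^\times$. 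So the theorem reduces to comparing the norm-inflations of the $\Lambda_{\widetilde{\kappa}_i}(\fs_{0,i})$ inside a common residue field.

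Write $\fs_{0,2} = [\pi_0]$ with $\pi_0$ supercuspidal, and $\JL(\pi_0) = \mathrm{St}_a(\sigma_0)$ for some integer $a \geq 1$; preservation of the parametric degree under Jacquet--Langlands forces $\fs_{0,1} = [\sigma_0]$ and $r_1 = r_2 a$, which in particular gives the inclusion $\be_{n/(r_1\delta(\Theta_F))} \subseteq \be_{n/(r_2\delta(\Theta_F))}$. When $a = 1$, both~$\fs_{0,i}$ are supercuspidal and are Jacquet--Langlands transfers of each other inside inner forms of~$\GL_{n/r_2}(F)$, so Theorem~\ref{comparecharacters} yields the result, provided we have matched~$\widetilde{\kappa}_i$ with the twisted $p$-primary $\beta$-extension on the Levi. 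For $a > 1$ the transfer $\mathrm{St}_a(\sigma_0)$ is no longer supercuspidal, and Theorem~\ref{comparecharacters} does not apply directly; one is reduced to a one-sided supercuspidal case which I would treat by adapting the proof of Theorem~\ref{comparecharacters}: apply Theorem~\ref{characterformula} on the supercuspidal~$\GL_{m/r_2}(D)$-side, and on the~$\GL_{n/r_2}(F)$-side compute the character of a~$K$-lift of~$\mathrm{St}_a(\sigma_0)$ via Mackey's formula, realizing the~$K$-lift as a parabolic induction of the~$K$-lift of~$\sigma_0$ and invoking the Jacquet--Langlands correspondence on the commutant~$G_K$ to match character values on elliptic regular pro-unipotent elements; the linear independence lemma then isolates the level zero part as in the proof of Theorem~\ref{comparecharacters}.

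The main obstacle is twofold. First, one must verify that the compatible $\beta$-extension~$\widetilde{\kappa}_i$ in the Levi agrees, up to characters trivial under norm-inflation, with the analogously defined $\epsilon^1\kappa^{p\text{-primary}}$ on the Levi; this requires controlling the interaction of both the $p$-primary determinant normalization of Proposition~\ref{p-primarywideextension} and the symplectic sign character of Section~\ref{maximalsimpletypesI} (which depends on the symplectic module $J^1/H^1$) with the compatibility construction of Section~\ref{levelzeromaps}. Second, one must carry out the one-sided character comparison above when the $\GL_{n/r_2}(F)$-side is a non-supercuspidal discrete series, keeping careful track of the sign discrepancies that arise in the Jacquet--Langlands character identity on~$G_K$. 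Both difficulties are handled by arguments paralleling those in Section~8 of~\cite{SecherreStevensJL} for the essentially tame case, whose methods extend to the present setting thanks to the endo-class invariance established in Theorem~\ref{endo-classinvariance}.
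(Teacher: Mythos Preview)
Your approach diverges substantially from the paper's, which never attempts a character computation on non-supercuspidal discrete series. Instead, the paper defines the permutation $\xi = \xi(\kappa_1,\kappa_2)$ of $\Gamma(\Theta_F)\backslash X_{\bC}(\Theta_F)$ induced by $\JL$ together with Theorem~\ref{endo-classinvariance}, observes via Section~\ref{lmodular} that $\xi$ preserves $\ell$-regular parts for every $\ell \neq p$ and preserves orbit size (parametric degree), and then---after passing to $\GL_{an}(F)$ and $\GL_{am}(D)$ for large $a$ via compatible $\beta$-extensions $\kappa_i^*$---applies lemma~8.5 of~\cite{SecherreStevensJL} to produce an $\be$-regular character $\beta$ with the same $\ell$-regular part as the inflation $[\chi_1^*]$, for a prime $\ell$ not dividing $|\be[\chi_1]^\times|$. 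Since $\xi[\beta]=[\beta]$ by the supercuspidal case and $\xi[\chi_1^*]$ is $\ell$-regular, the claim follows; the residual twist $\delta$ arising because $\kappa_1^*$ need not itself be the $\epsilon^1$-twisted $p$-primary extension on $\GL_{an}(F)$ is killed by the argument of~\cite{canonicalwideextensions}, Theorem~4.2.

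Your proposal has a genuine gap in the $a>1$ case. Theorem~\ref{characterformula} rests on the Mackey formula for a representation compactly induced from an open compact-mod-centre subgroup; $\mathrm{St}_a(\sigma_0)$ is not of this form, and realizing a ``$K$-lift'' as a parabolic induction does not help, because parabolically induced characters are supported away from the regular elliptic locus. There is no evident substitute expressing $\tr\,\mathrm{St}_a(\sigma_0)(\zeta u)$ in the shape required for the linear independence lemma. The paper's $\ell$-modular reduction sidesteps this entirely: it replaces $[\chi_1]$ by an $\be$-regular $[\beta]$ in a larger group, so that \emph{both} sides become supercuspidal at once and Theorem~\ref{comparecharacters} applies directly.

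The $\beta$-extension compatibility you flag is also a real obstruction, not a technicality: neither the $p$-primary normalization of Proposition~\ref{p-primarywideextension} nor the symplectic sign character is known to commute with the compatibility construction of Section~\ref{levelzeromaps}. The paper encounters the same issue when passing from $\kappa_1$ to $\kappa_1^*$ and resolves it only at the very end, by showing that the resulting discrepancy $\delta$ (a character of $\be^\times$) fixes every $\be$-regular orbit and hence is trivial; your sketch supplies no analogous mechanism for the Levi passage.
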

\begin{proof}
This follows from theorem~\ref{comparecharacters} using the technique of~\cite[Lemma~9.11]{SecherreStevensJL}. Write~$[\chi_i]$ for~$\Lambda_{\kappa_i}(\fs_i)$ and assume that~$\chi_1$ is not $\be$-regular, as the $\be$-regular case has already been treated. Write~$\xi(\kappa_1, \kappa_2)$ for the permutation of~$\Gamma(\Theta_F) \backslash X_\bC(\Theta_F)$ such that
\begin{displaymath}
\xi(\kappa_1, \kappa_2)\Lambda_{\kappa_1}(x_1) = \Lambda_{\kappa_2}(x_2)
\end{displaymath}
for all simple inertial classes $x_1 = \JL_{G_2}(x_2)$ of endo-class~$\Theta_F$. 
By the results in section~\ref{lmodular} we see that for any prime number $\ell \not = p$ this permutation preserves the equivalence relation of having the same $\ell$-regular parts. 
We will prove that~$\xi(\kappa_1, \kappa_2)[\chi_1] = [\chi_1]$.

Because the parametric degree of simple inertial classes is preserved under the Jacquet--Langlands correspondence, one finds that~$\be[\chi_1] = \be[\chi_2]$ (since by the formulas in remark~\ref{invariants} the parametric degree of $\fs_{G_i}(\Theta_F, \Theta_E, [\chi_i])$ equals~$n/s[\chi_i]$). Hence~$\xi(\kappa_1, \kappa_2)$ preserves the size of Frobenius orbits.

Let~$a$ be some large integer ($a \geq 7$ will suffice) and write~$\kappa_i^*$ for the maximal $\beta$-extension in~$\GL_{an}(F)$ or~$\GL_{am}(D)$ compatible with~$\kappa_i$. 
Assume that~$\fs_i$ corresponds to the supercuspidal support~$\pi_i^{\otimes r_i}$, and let $\fs_{i, a}$ be the simple inertial class with supercuspidal support~$\pi_i^{\otimes a r_i}$. Then the~$\fs_i^*$ are Jacquet--Langlands transfers of each other, and we claim that that $\Lambda_{\kappa_i^*}(\fs_i^*)$ is the inflation~$[\chi_i^*]$ of~$\Lambda_{\kappa_i}(\fs_i)$. To see this, observe that $\pi_i$ is a supercuspidal representation of some~$\GL_{n_0}(F)$ or $\GL_{m_0}(D)$, and write~$\kappa_{i, *}$ for the $\beta$-extension in this group compatible with~$\kappa_i$. Then by  construction~$\Lambda_{\kappa_i}(\fs_i)$ is the inflation of~$\Lambda_{\kappa_{i,*}}(\pi_i)$. By transitivity, $\kappa_{i, *}$ and~$\kappa_i^*$ are compatible, hence~$\Lambda_{\kappa_i^*}(\fs_i^*)$ is the inflation of $\Lambda_{\kappa_{i,*}}(\pi_i)$, and the claim follows.

So $\xi(\kappa_1^*, \kappa_2^*)[\chi_1]^* = [\chi_2]^*$, and since the norm is surjective in finite extensions of finite fields it suffices to prove that  $\xi(\kappa_1^*, \kappa_2^*)[\chi_1]^* = [\chi_1]^*$. By~\cite[Lemma~8.5, Remark~8.7]{SecherreStevensJL} we can find a prime number~$\ell \neq p$ not dividing the order of~$\be[\chi_i]^\times$ and an~$\be$-regular character~$\beta$ of~$\be_{an/ \delta(\Theta_F)}^\times$ whose~$\ell$-regular part is~$\chi_1^*$. Since we know that $(\xi(\kappa_1^*, \kappa_2^*)[\chi_1^*])^{(\ell)} = (\xi(\kappa_1^*, \kappa_2^*)[\beta]])^{(\ell)}$, and~$\chi_2^*$ is also $\ell$-regular, it suffices to prove that $\xi(\kappa_1^*, \kappa_2^*)[\beta] = [\beta]$. 

By theorem~\ref{comparecharacters} we know that there exists some $\beta$-extension~$\varkappa$ in~$\GL_{an}(F)$ such that $\xi(\varkappa, \kappa_2^*)[\beta] = [\beta]$, hence there exists some character~$\delta$ of~$\be^\times$ such that $\xi(\kappa_1^*, \kappa_2^*)[\beta] = [\delta\beta]$ for \emph{every} $\be$-regular character~$\beta$ of~$\be^\times_{an/\delta(\Theta_F)}$, because~$\kappa_1^*$ and~$\varkappa$ are twists of each other. 
We will prove that~$\delta$ is trivial: this implies the theorem.

Fix some $\be$-regular character~$\alpha_+$ of~$\be_{n/\delta(\Theta_F)}^\times$. 
Again by~\cite[Lemma~8.5, Remark~8.7]{SecherreStevensJL} there exists some prime number $r \not = p$ not dividing the order of~$\be_{n/\delta(\Theta_F)}^\times = \be[\alpha_+]^\times$ and some $\be$-regular character $\beta_+$ of $\be_{an/\delta(\Theta_F)}^\times$ such that~$\alpha^*_+$ is the $r$-regular part of~$\beta_+$.

We know that $\xi(\kappa_1, \kappa_2)[\alpha_+] = [\alpha_+]$ by theorem~\ref{comparecharacters} and regularity of~$\alpha_+$. 
On the other hand, $\xi(\kappa_1^*, \kappa_2^*)[\alpha^*_+]$ is the $r$-regular part of $\xi(\kappa_1^*, \kappa_2^*)[\beta_+] = [\delta \beta_+]$, which is $[\delta^{(r)}\alpha^*_+]$.
Since $\xi(\kappa_1^*, \kappa_2^*)[\alpha^*_+] = (\xi(\kappa_1, \kappa_2)[\alpha_+])^*$ we find that $[\alpha^*_+] = [\delta^{(r)} \alpha^*_+]$.
Since~$\delta$ is a character of~$\be^\times$ and~$r$ does not divide the order of~$\be_{n/\delta(\Theta_F)}^\times$, we see that~$\delta^\pexp{r} = \delta$.
It follows that we can write $\delta =(\alpha^*_+)^{|\be|^{i}-1}$ for some integer $i \in \{0, \ldots, \frac{n}{\delta(\Theta_F)} - 1 \}$. 

Now we can take~$\alpha_+$ to be a generator of the character group of $\be_{n/\delta(\Theta_F)}^\times$, hence we can assume that~$\alpha_+$ has order $|\be|^{n/\delta(\Theta_F)} - 1$. 
But the equality $\delta =(\alpha^*_+)^{|\be|^{i}-1}$ implies that the order of~$\alpha^*_+$ divides $(|\be|^i - 1)(|\be| - 1)$. Since $|\be| \geq 2$ we have $|\be|^{n/\delta(\Theta_F)} - 1 > (|\be|^i - 1)(|\be| - 1)$, hence $i = 0$ and~$\delta$ is trivial.
\end{proof}

\bibliographystyle{amsalpha}
\bibliography{refpapers}

\end{document}